\UseRawInputEncoding
\documentclass[12pt, reqno]{amsart}
\usepackage[margin=1in]{geometry}
\usepackage{amssymb,latexsym,amsmath,amscd,amsfonts}
\usepackage{latexsym}
\usepackage[mathscr]{eucal}
\usepackage{bm}
\usepackage{mathptmx}

\newcommand{\vsp}{\vspace{5mm}}

\def\textmatrix#1&#2\\#3&#4\\{\bigl({#1 \atop #3}\ {#2 \atop #4}\bigr)}
\def\dispmatrix#1&#2\\#3&#4\\{\left({#1 \atop #3}\ {#2 \atop #4}\right)}
\newcommand{\beg}{\begin{equation}}
	\newcommand{\eeg}{\end{equation}}
\newcommand{\ben}{\begin{eqnarray*}}
	\newcommand{\een}{\end{eqnarray*}}

\newlength{\bibitemsep}\setlength{\bibitemsep}{.6\baselineskip}
\newlength{\bibparskip}\setlength{\bibparskip}{0pt}
\let\oldthebibliography\thebibliography
\renewcommand\thebibliography[1]{%
	\oldthebibliography{#1}%
	\setlength{\parskip}{\bibitemsep}%
	\setlength{\itemsep}{\bibparskip}%
}
\newtheorem{thm}{Theorem}[section]

\newtheorem{lem}[thm]{Lemma}

\newtheorem{prop}[thm]{Proposition}
\numberwithin{equation}{section} 
\theoremstyle{definition}
\newtheorem{defn}[thm]{Definition}
\newtheorem{rem}[thm]{Remark}

\newcommand{\HS}{\mathcal H}
\newcommand{\C}{\mathbb{C}}

\newcommand{\D}{\mathbb{D}}

\newcommand{\ov}{\overline}

\begin{document}
	\title[Minimal unitary dilations for commuting contractions]
	{Minimal unitary dilations for commuting contractions}
	
	\author[Pal and Sahasrabuddhe]{Sourav Pal and Prajakta Sahasrabuddhe}
	
	\address[Sourav Pal]{Mathematics Department, Indian Institute of Technology Bombay,
		Powai, Mumbai - 400076, India.} \email{sourav@math.iitb.ac.in}
		
	\address[Prajakta Sahasrabuddhe]{Mathematics Department, Indian Institute of Technology Bombay,
		Powai, Mumbai - 400076, India.} \email{prajakta@math.iitb.ac.in}

	\keywords{Unitary dilation, Minimal dilation space}
	
	\subjclass[2010]{47A15, 47A20, 47B35, 47B38}
	
	\thanks{The first named author is supported by the Seed Grant of IIT Bombay, the CPDA and the MATRICS Award (Award No. MTR/2019/001010) of Science and Engineering Research Board (SERB), India. The second named author has been supported by the Ph.D Fellowship of Council of Scientific and Industrial Research (CSIR), India.}

	\begin{abstract}
		
 For commuting contractions $T_1,\dots ,T_n$ acting on a Hilbert space $\HS$ with $T=\prod_{i=1}^n T_i$, we show that $(T_1, \dots, T_n)$ dilates to commuting isometries $(V_1, \dots , V_n)$ on the minimal isometric dilation space of $T$ with $V=\prod_{i=1}^n V_i$ being the minimal isometric dilation of $T$ if and only if $(T_1^*, \dots , T_n^*)$ dilates to commuting isometries $(Y_1, \dots , Y_n)$ on the minimal isometric dilation space of $T^*$ with $Y=\prod_{i=1}^n Y_i$ being the minimal isometric dilation of $T^*$. Then, we prove an analogue of this result for unitary dilations of $(T_1, \dots , T_n)$ and its adjoint. We find a necessary and sufficient condition such that $(T_1, \dots , T_n)$ possesses a unitary dilation $(W_1, \dots , W_n)$ on the minimal unitary dilation space of $T$ with $W=\prod_{i=1}^n W_i$ being the minimal unitary dilation of $T$. We show an explicit construction of such a unitary dilation on both Sch$\ddot{a}$ffer and Sz. Nagy-Foias minimal unitary dilation spaces of $T$. Also, we show that a relatively weaker hypothesis is necessary and sufficient for the existence of such a unitary dilation when $T$ is a $C._0$ contraction, i.e. when ${T^*}^n \rightarrow 0$ strongly as $n \rightarrow \infty $. We construct a different unitary dilation for $(T_1, \dots , T_n)$ when $T$ is a $C._0$ contraction.
 
	\end{abstract}
	
	\maketitle
	
	\tableofcontents
		
	\section{Introduction}
	\vspace{0.4cm}
	
Throughout the paper all operators are bounded linear operators acting on complex Hilbert spaces. A contraction is an operator with norm not greater than $1$. We begin with the definitions of isometric and unitary dilations of a tuple of commuting contractions.

\begin{defn}
Let $(T_1, \dots , T_n)$ be a tuple of commuting contractions acting on a Hilbert space $\HS$. A commuting tuple of unitaries $(W_1, \dots , W_n)$ acting on a Hilbert space $\mathcal K'$ is said to be a \textit{unitary dilation} of $(T_1, \dots , T_n)$ if $\HS$ can be realized as a closed linear subspace of $\mathcal K'$ and for any non-negative integers $k_1, \dots , k_n$ we have
\[
T_1^{k_1} \dots T_n^{k_n}=P_{\HS} (W_1^{k_1}\dots W_n^{k_n})|_{\HS},
\]	
where $P_{\HS}:\mathcal K' \rightarrow \HS$ is the orthogonal projection. Moreover, such a unitary dilation is called \textit{minimal} if
\[
\mathcal K' = \ov{Span}\; \{ W_1^{t_1}\dots W_n^{t_n}h\;:\; h\in \HS , \; t_1, \dots , t_n \in \mathbb Z \}.
\]
Similarly, a commuting tuple of isometries $(V_1, \dots , V_n)$ acting on a Hilbert space $\mathcal K$ is said to be an \textit{isometric dilation} of $(T_1, \dots , T_n)$ if $\HS$ can be realized as a closed linear subspace of $\mathcal K$ and for any non-negative integers $k_1, \dots , k_n$ we have
\[
T_1^{k_1} \dots T_n^{k_n}=P_{\HS} (V_1^{k_1}\dots V_n^{k_n})|_{\HS}.
\]
Moreover, such an isometric dilation is called \textit{minimal} if
\[
\mathcal K = \ov{Span}\; \{ V_1^{k_1}\dots V_n^{k_n}h\;:\; h\in \HS , \; k_1, \dots , k_n \in \mathbb N \cup \{0\} \}.
\]
\end{defn}
A path-breaking work due to Sz. Nagy, \cite{Nagy 1} established that a contraction always dilates to a unitary. The result of Sz. Nagy was generalized by Ando, \cite{Ando} to a pair of commuting contractions. A pair of commuting contractions possesses an unconditional unitary dilation. Later, Parrott showed by a counter example that a triple of commuting contractions may not dilate to a triple of commuting unitaries, see \cite{Par}. In a more general operator theoretic language rational dilation succeeds on the closed unit disk $\ov{\mathbb D}$ and on the closed bidisc $\ov{\mathbb D^2}$ and fails on the closed polydisc $\ov{\D^n}$ when $n \geq 3$. Thus, for $n\geq 3$ history witnessed only conditional unitary dilations for commuting contractions. Some notable works in this direction are due to Agler \cite{Agler 1}, Brehmer \cite{Bre}, Curto, Vasilescu \cite{Cur:Vas 1, Cur:Vas 2}, Ball, Li, Timotin, Trent \cite{Bal:Tim:Tre}, Ball, Trent, Vinnikov \cite{Bal:Tre:Vin}, Eschmeier \cite{Esch-1}, M$\ddot{u}$ller, Vasilescu \cite{Mul:Vas}, Burdak \cite{Bur 1}, Barik, Das, Sarkar \cite{B:D:H:S} and many others, see the reference list and the references therein.\\

In this article we study isometric and unitary dilations of a tuple of commuting contractions $(T_1, \dots , T_n)$ on the minimal isometric and minimal unitary dilation spaces (which are always unique upto unitaries) of $T=\prod_{i=1}^n T_i$. Our first main result is the following.

\begin{thm} \label{intro:mainlemma}
Let $T_1,\ldots T_n\in \mathcal{B}(\mathcal{H})$ be commuting contractions and let $\mathcal K$ be the minimal isometric dilation space for their product $T=\Pi_{i=1}^nT_i$. Then $(T_1, \dots , T_n)$ possesses an isometric dilation $(V_1,\ldots ,V_n)$ on $\mathcal K$ with $V=\prod_{i=1}^{n}V_i$ being the minimal isometric dilation of $T$ if and only if $(T_1^*, \dots , T_n^*)$ possesses an isometric dilation $(Y_1,\ldots ,Y_n)$ on $\mathcal K_*$ with $Y=\prod_{i=1}^{n}Y_i$ being the minimal isometric dilation of $T$, where $\mathcal K_*$ is the minimal isometric dilation space for $T^*$.
\end{thm}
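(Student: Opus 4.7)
By the symmetry of the statement in $T$ and $T^*$, it suffices to prove one direction; I describe the forward implication. The strategy is to use the minimal unitary dilation $U$ of $T$ on $\mathcal{K}'$ as a bridge between $\mathcal{K}$ and $\mathcal{K}_*$: both embed isometrically in $\mathcal{K}'$, with $\mathcal{K}=\overline{\bigvee_{k\ge 0}U^k\mathcal{H}}$, $\mathcal{K}_*=\overline{\bigvee_{k\ge 0}U^{*k}\mathcal{H}}$, $V=U|_{\mathcal{K}}$, and $Y:=U^*|_{\mathcal{K}_*}$ the minimal isometric dilation of $T^*$ (this last fact being standard for the minimal unitary dilation).

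The first stage is to extend the commuting isometries $V_1,\dots,V_n$ on $\mathcal{K}$ to commuting unitaries $U_1,\dots,U_n$ on $\mathcal{K}'$ with $\prod_i U_i=U$. Exploiting the representation $\mathcal{K}'=\overline{\bigcup_{k\ge 0}U^{-k}\mathcal{K}}$, I would define
\[
U_i(U^{-k}y):=U^{-k}V_i y,\qquad y\in\mathcal{K},\ k\ge 0.
\]
Consistency of this rule follows from $V_i V=V V_i$; the map is plainly isometric, and surjectivity onto $\mathcal{K}'$ uses the inclusion $\mathcal{K}\subseteq U^{-1}V_i\mathcal{K}$, which is immediate from the factorisation $V=V_iV^{(i)}$ with $V^{(i)}:=\prod_{j\ne i}V_j$. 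Hence each $U_i$ extends by continuity to a unitary on $\mathcal{K}'$; commutativity of the $U_i$'s and the product relation $\prod_i U_i=U$ follow from the corresponding relations among the $V_i$'s. A useful consequence is that $\mathcal{K}$ is $U_i$-invariant with $U_i|_{\mathcal{K}}=V_i$.

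The second stage sets $Y_i:=U_i^*|_{\mathcal{K}_*}$. For this to yield commuting isometries on $\mathcal{K}_*$ with $\prod_i Y_i=Y$, the essential point is the invariance $U_i^*\mathcal{K}_*\subseteq\mathcal{K}_*$. Since $U_i^*$ commutes with $U^*$, this reduces to showing $U_i^*\mathcal{H}\subseteq\mathcal{K}_*$; using the formula from the first stage one finds $U_i^{-1}h=U^{-1}V^{(i)}h$ for $h\in\mathcal{H}$, so the problem becomes the assertion that $U^{-1}V^{(i)}h$ lies in $\mathcal{K}_*$. This is the main obstacle of the proof, and I would tackle it by working in an explicit model (Sch$\ddot{a}$ffer or Sz.-Nagy-Foias) for the minimal unitary dilation and using the constraint $V_iV^{(i)}=V$ together with the minimality of $V$ to control the location of $V^{(i)}h$ along the wandering subspaces of $V$, so that $U^{-1}V^{(i)}h$ does not pick up components in $\mathcal{K}'\ominus\mathcal{K}_*$.

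Once this invariance is in hand, the remaining verifications are routine: the $Y_i$'s are commuting isometries (restrictions of commuting unitaries to a common invariant subspace), their product is $(\prod_i U_i)^*|_{\mathcal{K}_*}=U^*|_{\mathcal{K}_*}=Y$, and the dilation identity $P_{\mathcal{H}}Y_1^{k_1}\cdots Y_n^{k_n}|_{\mathcal{H}}=T_1^{*k_1}\cdots T_n^{*k_n}$ follows from the unitary dilation property of $(U_i)$ together with the co-invariance of $\mathcal{H}$ under each $U_i$. The reverse implication is proved by applying the same construction with $T$ replaced by $T^*$ throughout.
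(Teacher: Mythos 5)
Your route is genuinely different from the paper's. The paper proves this equivalence entirely at the level of the Berger--Coburn--Lebow data: it reduces both sides to the existence of projections and unitaries on $\mathcal{D}_T$ (resp.\ $\mathcal{D}_{T^*}$) satisfying conditions $(1)$--$(5)$ of Theorem \ref{main}, manufactures candidates $G_i,G_i'$ on $\mathcal{D}_{T^*}$ via Lemmas \ref{fundexist} and \ref{Fundamental equations}, and then verifies the five identities for $(T_1^*,\dots,T_n^*)$ by a long chain of operator computations (using the Wold decomposition of $V$ and Theorem \ref{BCL} to obtain the commutator relations). You instead work geometrically inside the minimal unitary dilation space $\mathcal{K}'$ of $T$: extend the $V_i$ to commuting unitaries $U_i$ on $\mathcal{K}'$ with $\prod_i U_i=U$, then restrict the adjoints to $\mathcal{K}_*$. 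Your stage one is correct and standard (consistency from $V_iV=VV_i$, dense range from $V\mathcal{K}\subseteq V_i\mathcal{K}$), and the concluding verifications are fine. If completed, your argument is shorter and more conceptual, and it yields the unitary dilation of Theorem \ref{Unimain} essentially for free; the paper's computational route, by contrast, produces the explicit data $Q_i,\widetilde{U}_i$ on $\mathcal{D}_{T^*}$ that are reused in the later constructions.

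As written, though, there is a gap precisely where you flag ``the main obstacle'': you never prove $U_i^*\mathcal{H}\subseteq\mathcal{K}_*$, you only outline a strategy involving wandering subspaces. The claim is true and the approach does not fail, but a plan is not a proof, and this is the one step carrying real content. It can be closed cleanly as follows. In the Sch\"affer picture $\mathcal{K}'=l^2(\mathcal{D}_T)\oplus\mathcal{H}\oplus l^2(\mathcal{D}_{T^*})$ one has $\mathcal{K}_*=\mathcal{K}'\ominus(\mathcal{K}\ominus\mathcal{H})$, so $U_i^*\mathcal{H}\subseteq\mathcal{K}_*$ holds if and only if $\langle h,U_iy\rangle=0$ for all $h\in\mathcal{H}$ and $y\in\mathcal{K}\ominus\mathcal{H}$, i.e.\ if and only if $P_{\mathcal{H}}V_i(\mathcal{K}\ominus\mathcal{H})=0$. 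This says that $\mathcal{K}\ominus\mathcal{H}=l^2(\mathcal{D}_T)$ is invariant under each $V_i$, which is exactly what the block form $V_i=\left[\begin{smallmatrix} T_i & 0\\ * & *\end{smallmatrix}\right]$ with respect to $\mathcal{H}\oplus l^2(\mathcal{D}_T)$ (available from Theorem \ref{main}, or from Lemma \ref{forunidil} restricted to the isometric dilation space) provides. With that inserted, your proof is complete.
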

   
In our second main result, which is stated below, we find a necessary and sufficient condition such that $(T_1, \dots , T_n)$ dilates to a commuting unitary tuple $(W_1, \dots , W_n)$ on the minimal unitary dilation space $\mathcal K'$ of $T$ with $\prod_{i=1}^n W_i=W$ being the minimal unitary dilation of $T$. 

\begin{thm} \label{intro:uni-main}
			
	Let $T_1,\ldots,T_n\in \mathcal{B}(\mathcal{H})$ be commuting contractions, $T=\Pi_{j=1}^nT_j$ and $T_i'=\Pi_{i\neq j} T_j$ for $1\leq i \leq n$. 
	\begin{itemize}
		\item[(a)] If $\widetilde{\mathcal{K}}_0$ is a minimal unitary dilation space for $T$, then $(T_1,\ldots ,T_n)$ possesses a unitary dilation $(W_1,\ldots,W_{n})$ on $\widetilde{\mathcal{K}}_0$ with $W=\prod_{i=1}^{n}W_i$ being the minimal unitary dilation of $T$ if and only if there exist unique projections $P_1,\ldots ,P_n$ and unique commuting unitaries $U_1,\ldots ,U_n$ in $\mathcal{B}(\mathcal{D}_T)$ with $\prod_{i=1}^n U_i=I$ such that the following hold for $i=1, \dots, n:$
		\begin{enumerate} 
			\item $D_TT_i=P_i^{\perp}U_i^*D_T+P_iU_i^*D_TT$ ,
			\item  $P_i^{\perp}U_i^*P_j^{\perp}U_j^*=P_j^{\perp}U_j^*P_i^{\perp}U_i^*$ ,
			\item $U_iP_iU_jP_j=U_jP_jU_iP_i$ ,
			\item $D_TU_iP_iU_i^*D_T=D_{T_i}^2$ ,
			\item  $P_1+U_1^*P_2U_1+U_1^*U_2^*P_3U_2U_1+\ldots +U_1^*U_2^*\ldots U_{n-1}^*P_nU_{n-1}\ldots U_2U_1 =I_{\mathcal{D}_{T}}$.
			
		\end{enumerate} 
				
		\item[(b)] Such a unitary dilation is minimal and unique in the sense that if $(X_1, \dots , X_n)$ on $\widetilde{\mathcal K}_1$ is another unitary dilation of $(T_1, \dots , T_n)$ such that the product $X=\prod_{i=1}^nX_i$ is a minimal unitary dilation of $T$, then there is a unitary $\rho:\widetilde{\mathcal K}_0 \rightarrow \widetilde{\mathcal K}_1$ such that $(X_1, \dots , X_n)=(\rho* W_1 \rho, \dots , \rho* W_n \rho)$.
		
	\end{itemize}
	
\end{thm}

This is Theorem \ref{Unimain} in this paper. We show an explicit construction of such a unitary dilation on the Sch$\ddot{a}$ffer's minimal unitary dilation space $\widetilde{\mathcal K_0}= l^2(\mathcal D_T)\oplus \HS \oplus l^2(\mathcal D_{T^*})$ of $T$, where $\mathcal D_T=\ov{Ran}(I-T^*T)^{\frac{1}{2}}$ and $\mathcal D_{T^*}=\ov{Ran} (I-TT^*)^{\frac{1}{2}}$. Note that upto a unitary $\widetilde{\mathcal K_0}$ is the smallest Hilbert space on which $(T_1, \dots , T_n)$ can have such a unitary dilation and the reason is that it is the minimal unitary dilation space of the product $\prod_{i=1}^nT_i=T$.

In \cite{Sou:Pra}, the authors of this article show that $(T_1, \dots , T_n)$ dilates to a commuting tuple of isometries $(V_1, \dots , V_n)$ on the minimal isometric dilation space $\mathcal K$ of $T$ with $V=\prod_{i=1}^n V_i$ being the minimal isometric dilation of $T$ if and only if the conditions $(1)-(5)$ of Theorem \ref{intro:uni-main} hold. Naturally, any unitary extension of the subnormal tuple $(V_1, \dots , V_n)$ becomes a unitary dilation for $(T_1, \dots , T_n)$. The most interesting fact about the unitary dilation in Theorem \ref{intro:uni-main} is that without any additional hypothesis $(V_1, \dots , V_n)$ on $\mathcal K$ admits a minimal unitary extension $(W_1, \dots , W_n)$ on $\mathcal K'$, the minimal unitary dilation space for $T$.
 
  We also show in Theorem \ref{coromain} that such a unitary dilation can be constructed with the conditions $(1)-(4)$ only, though we do not have an exact converse part then. We construct a special unitary dilation for $(T_1, \dots , T_n)$ when the product $T$ is a $C._0$ contraction, i.e. ${T^*}^n\rightarrow 0$ strongly as $n\rightarrow \infty$. Indeed, in Theorem \ref{purediluni} with an explicit construction we show that an analogue of Theorem \ref{intro:uni-main} can be obtained with a weaker hypothesis when $T$ is a $C._0$ contraction. This is another main result of this paper. The other achievement of this paper is Theorem \ref{thm:Nagy-Foias} in which we construct an explicit unitary dilation for $(T_1, \dots , T_n)$ on the Sz. Nagy-Foias minimal unitary dilation space of $T$ when $T$ is a completely non-unitary (c.n.u.) contraction, i.e. when $T$ is missing a unitary summand in its canonical decomposition. We accumulate a few preparatory results in Section \ref{sec:02}.
  
	
	\section{A few preparatory results} \label{sec:02}
	
	\vspace{0.4cm}
	
\noindent We begin with a famous result due to Douglas, Muhly and Pearcy. This result will play a major role in the proof of the main results of this paper.
   
	\begin{prop} [\cite{Dou:Muh:Pea}, Proposition 2.2] \label{prop:21}	

For $i=1,2,$ let $T_i$ be a contraction on a Hilbert space $\HS_i$ and let $X$ be an operator from $\HS_2$ to $\HS_1$. A necessary and sufficient condition that the operator on $\HS_1 \oplus \HS_2$ defined by the matrix $\begin{bmatrix}
T_1 & X \\
0 & T_2
\end{bmatrix}
$
be a contraction is that there exists a contraction $C$ mapping $\HS_2$ into $\HS_1$ such that
$
X= (I-T_1T_1^*)^{\frac{1}{2}}C(I-T_2^*T_2)^{\frac{1}{2}}.
$

\end{prop}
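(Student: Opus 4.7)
The plan is to prove the two directions by direct computation together with repeated use of Douglas' range-inclusion theorem. Writing $A = \left[\begin{smallmatrix} T_1 & X \\ 0 & T_2 \end{smallmatrix}\right]$ throughout, the key identities I would rely on are the intertwining relation $T_1^* D_{T_1^*} = D_{T_1} T_1^*$ (obtained from $T_1 D_{T_1}^2 = D_{T_1^*}^2 T_1$ by passing to square roots) and $\|T_1^* c\|^2 + \|D_{T_1^*} c\|^2 = \|c\|^2$.

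For \emph{sufficiency}, assume $X = D_{T_1^*} C D_{T_2}$ with $\|C\| \leq 1$. I would start from
\[
I - A^*A = \begin{bmatrix} D_{T_1}^2 & -T_1^*X \\ -X^*T_1 & D_{T_2}^2 - X^*X \end{bmatrix}
\]
and, after substituting $X$ and using the intertwining identity, rewrite this as
\[
\begin{bmatrix} D_{T_1} & 0 \\ 0 & D_{T_2} \end{bmatrix} \begin{bmatrix} I & -T_1^*C \\ -C^*T_1 & I - C^* D_{T_1^*}^2 C \end{bmatrix} \begin{bmatrix} D_{T_1} & 0 \\ 0 & D_{T_2} \end{bmatrix}.
\]
Evaluating the middle quadratic form at $(a,b)$ and completing the square produces $\|a - T_1^*Cb\|^2 + \|b\|^2 - \|Cb\|^2$, which is $\geq 0$ by the norm identity above together with $\|C\| \leq 1$. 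Hence $I - A^*A \geq 0$ and $A$ is a contraction.

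For \emph{necessity}, assume $A$ is a contraction; then both $I - A^*A \geq 0$ and $I - AA^* \geq 0$. The $(2,2)$ block of $I - A^*A$ gives $\|Xh\| \leq \|D_{T_2} h\|$, and the $(1,1)$ block of $I - AA^*$ gives $\|X^* g\| \leq \|D_{T_1^*} g\|$. Douglas' theorem then delivers a contraction $Z : \HS_2 \to \HS_1$ with $X = D_{T_1^*} Z$. I would replace $Z$ by $\widetilde{Z} := P_{\ov{\text{Ran}}\,D_{T_1^*}} Z$, which still satisfies $X = D_{T_1^*} \widetilde{Z}$ and whose range lies in $\ov{\text{Ran}}\,D_{T_1^*}$; then, by one more application of Douglas, the desired $X = D_{T_1^*} C D_{T_2}$ with $\|C\| \leq 1$ will follow once I establish $\|\widetilde{Z} h\| \leq \|D_{T_2} h\|$ for every $h \in \HS_2$.

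The \emph{main obstacle} is exactly this last bound, which cannot be read off from the $(2,2)$ block alone: one must exploit the off-diagonal positivity of $I - A^*A$. Evaluating the full quadratic form of $I - A^*A$ at $(a, b)$ yields
\[
\|D_{T_1} a\|^2 - 2\,\mathrm{Re}\langle D_{T_1} T_1^* \widetilde{Z} b,\, a\rangle + \|D_{T_2} b\|^2 - \|D_{T_1^*} \widetilde{Z} b\|^2 \geq 0.
\]
Since $\widetilde{Z} b \in \ov{\text{Ran}}\,D_{T_1^*}$, the intertwining identity forces $T_1^* \widetilde{Z} b \in \ov{\text{Ran}}\,D_{T_1}$, so $D_{T_1} a$ can be chosen to approximate $T_1^* \widetilde{Z} b$. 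In the limit the first three terms collapse to $-\|T_1^*\widetilde{Z}b\|^2 + \|D_{T_2} b\|^2$, leaving $\|D_{T_2} b\|^2 \geq \|T_1^*\widetilde{Z}b\|^2 + \|D_{T_1^*}\widetilde{Z}b\|^2 = \|\widetilde{Z}b\|^2$ by the norm identity --- exactly the inequality needed to close the proof.
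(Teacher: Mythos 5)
Your proof is correct and complete. Note, however, that the paper itself offers no proof of this proposition: it is quoted verbatim as Proposition 2.2 of Douglas--Muhly--Pearcy and used as a black box, so there is no internal argument to compare yours against. Your two directions both check out. For sufficiency, the congruence $I-A^*A = \operatorname{diag}(D_{T_1},D_{T_2})\,M\,\operatorname{diag}(D_{T_1},D_{T_2})$ is verified entry by entry using $T_1D_{T_1}=D_{T_1^*}T_1$ and its adjoint, and completing the square in the middle form does give $\|a-T_1^*Cb\|^2+\|b\|^2-\|Cb\|^2\geq 0$ via the defect identity $\|T_1^*c\|^2+\|D_{T_1^*}c\|^2=\|c\|^2$. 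For necessity, the two applications of Douglas's lemma are legitimate, and you correctly identify that the delicate point is the bound $\|\widetilde{Z}b\|\leq\|D_{T_2}b\|$, which genuinely requires the off-diagonal positivity of $I-A^*A$ rather than the $(2,2)$ block alone; your device of cutting $Z$ down by $P_{\ov{\operatorname{Ran}}D_{T_1^*}}$ is exactly what makes the approximation $D_{T_1}a_n\to T_1^*\widetilde{Z}b$ available (since $T_1^*$ maps $\ov{\operatorname{Ran}}\,D_{T_1^*}$ into $\ov{\operatorname{Ran}}\,D_{T_1}$ by the intertwining relation), and the limit collapses the form to the desired inequality. This is essentially the classical argument of the original source; nothing is missing.
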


The following theorem is another general and useful	result in operator theory.
	 
	\begin{thm}[\cite{Bha:Sau}, Lemma 13] \label{forunidil}
		Let $(R_1,\dots , R_{n-1},U)$ on $\mathcal{K}$ be a dilation of $(S_1,\dots , S_{n-1},P)$ on $\mathcal{H}$, where $P$ is a contraction on $\mathcal{H}$ and $U$ on $\mathcal{K}$ is the Schäffer minimal unitary
		dilation of $P$. Then for all $j=1,2,\ldots,n-1$, $R_j$ admits a matrix representation of the form
		\[\begin{bmatrix}
			*&*&*\\
			0&S_j&*\\
			0&0&*
		\end{bmatrix}
		\]	with respect to the decomposition $\mathcal{K}=l^2(\mathcal{D}_T)\oplus \mathcal{H}\oplus l^2(\mathcal{D}_{T^*})$.
	\end{thm}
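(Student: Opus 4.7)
The stated block form of $R_j$ with respect to $\mathcal{K}=\ell^2(\mathcal{D}_P)\oplus \mathcal{H}\oplus \ell^2(\mathcal{D}_{P^*})$ is equivalent to three assertions: (i) the $(2,2)$-entry equals $S_j$; (ii) the subspace $\mathcal{N}_+:=\ell^2(\mathcal{D}_P)\oplus \mathcal{H}=\bigvee_{k\geq 0}U^k\mathcal{H}$ is $R_j$-invariant, which produces the zeros in row $3$, columns $1$ and $2$; and (iii) the subspace $\ell^2(\mathcal{D}_P)=\mathcal{N}_-^\perp$ is $R_j$-invariant, where $\mathcal{N}_-:=\mathcal{H}\oplus \ell^2(\mathcal{D}_{P^*})=\bigvee_{k\geq 0}U^{-k}\mathcal{H}$, which produces the zero in position $(2,1)$. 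Assertion (i) is just the dilation identity $P_\mathcal{H}R_j|_\mathcal{H}=S_j$ (take $k_j=1$ and all other exponents zero).

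For (ii), commutativity $R_jU=UR_j$ gives $R_jU^kh=U^kR_jh$, so $\mathcal{N}_+$ is $R_j$-invariant as soon as $R_j\mathcal{H}\subseteq \mathcal{N}_+$; equivalently $\langle R_jh,\eta\rangle =0$ for every $h\in \mathcal{H}$ and $\eta\in \mathcal{N}_+^\perp=\ell^2(\mathcal{D}_{P^*})$. Since $\ell^2(\mathcal{D}_{P^*})\subset \mathcal{N}_-$, such an $\eta$ may be approximated by finite sums $\eta_m=\sum_{k\geq 0}U^{-k}h_k^{(m)}$, $h_k^{(m)}\in \mathcal{H}$. Commutativity together with the joint dilation identity $P_\mathcal{H}R_jU^k|_\mathcal{H}=S_jP^k$ yield
\[
\langle R_jh,U^{-k}h_k^{(m)}\rangle=\langle R_jU^kh,h_k^{(m)}\rangle=\langle S_jP^kh,h_k^{(m)}\rangle=\langle h,(P^*)^kS_j^*h_k^{(m)}\rangle,
\]
and summation (using $S_jP=PS_j$) gives $\langle R_jh,\eta_m\rangle=\langle h,S_j^*\sum_k(P^*)^kh_k^{(m)}\rangle$. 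Because $P_\mathcal{H}U^{-k}|_\mathcal{H}=(P^*)^k$, one recognizes $\sum_k(P^*)^kh_k^{(m)}=P_\mathcal{H}\eta_m$, and $\eta_m\to \eta\perp \mathcal{H}$ forces $P_\mathcal{H}\eta_m\to 0$; hence $\langle R_jh,\eta\rangle=0$.

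For (iii), observe that $(R_1^*,\ldots ,R_{n-1}^*,U^*)$ is a dilation of $(S_1^*,\ldots ,S_{n-1}^*,P^*)$ on $\mathcal{K}$, with $U^*$ itself the Schäffer minimal unitary dilation of $P^*$ on the same $\mathcal{K}$, but with the roles of $\ell^2(\mathcal{D}_P)$ and $\ell^2(\mathcal{D}_{P^*})$ interchanged. Repeating the argument of (ii) for this adjoint tuple gives $R_j^*\mathcal{H}\subseteq \mathcal{N}_-$, and commutativity $R_j^*U^*=U^*R_j^*$ upgrades this to $R_j^*\mathcal{N}_-\subseteq \mathcal{N}_-$, so that $\mathcal{N}_-^\perp=\ell^2(\mathcal{D}_P)$ is $R_j$-invariant. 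The principal obstacle is the limiting step in (ii): the approximating vectors $\eta_m$ are not themselves orthogonal to $\mathcal{H}$, so the orthogonality $\langle R_jh,\eta\rangle=0$ must be extracted from the fact that $P_\mathcal{H}\eta_m$ decays to zero, which is where the interplay between commutativity, the dilation identity and the specific Schäffer structure of $U$ is decisive.
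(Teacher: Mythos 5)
The paper does not actually prove this statement: it is imported verbatim as Lemma 13 of Bhattacharyya--Sau \cite{Bha:Sau} and used as a black box, so there is no internal proof to compare against. Your argument is correct and complete, and it is essentially the standard argument (and the one in the cited source): reduce the asserted block form to the three assertions that the compression of $R_j$ to $\mathcal{H}$ is $S_j$, that $\bigvee_{k\ge 0}U^k\mathcal{H}=\ell^2(\mathcal{D}_P)\oplus\mathcal{H}$ is $R_j$-invariant, and that $\bigl(\bigvee_{k\ge 0}U^{*k}\mathcal{H}\bigr)^{\perp}=\ell^2(\mathcal{D}_P)$ is $R_j$-invariant; then verify the two invariances via commutativity, the power-dilation identity $P_{\mathcal{H}}R_jU^k|_{\mathcal{H}}=S_jP^k$, and the Schäffer identifications of the two half-spaces. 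Two small remarks. First, the ``principal obstacle'' you flag at the end is already fully resolved by your own computation: since $\langle R_jh,\eta_m\rangle=\langle S_jh,P_{\mathcal{H}}\eta_m\rangle$, $\eta_m\to\eta$ and $P_{\mathcal{H}}\eta=0$, continuity of the inner product gives $\langle R_jh,\eta\rangle=\lim_m\langle S_jh,P_{\mathcal{H}}\eta_m\rangle=0$ with nothing further needed. Second, in the adjoint step you do not need $U^*$ to be literally the Schäffer dilation of $P^*$ (it is only unitarily equivalent to it); all your argument actually uses is the identity $\bigvee_{k\ge 0}U^{*k}\mathcal{H}=\mathcal{H}\oplus\ell^2(\mathcal{D}_{P^*})$, which does hold for the Schäffer construction, so the proof stands. (You also silently corrected the paper's typo $\mathcal{D}_T$, $\mathcal{D}_{T^*}$ to $\mathcal{D}_P$, $\mathcal{D}_{P^*}$ in the statement, which is the intended reading.)
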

	
	In \cite{Ber}, Berger, Coburn and Lebow found the most popular factorization of a pure isometry. This is stated as Theorem \ref{BCL} in this paper. Later Bercovici, Douglas and Foias proved a finer version of that result which we are going to state below.
		
			\begin{lem}[Bercovici, Douglas and Foias, \cite{Berc:Dou:Foi}] \label{BDF lemma O}
			Let $U_1,\ldots ,U_n$ be unitaries on Hilbert space $\mathcal{H}$  and $P_1,\ldots ,P_n$ be orthogonal projections on $\mathcal{H}$. For $1\leq i \leq n$,  let $V_i=M_{U_iP_i^{\perp}+zU_iP_i}$. Then $(V_1,\ldots ,V_n)$ defines a commuting $n$-tuple of isometries with $\Pi_{i=1}^nV_i=M_z$ if and only if the following conditions are satisfied. \begin{enumerate}
				\item $U_iU_j=U_jU_i$ for all $1\leq i <j \leq n$,
				\item $U_1\ldots U_n=I_{\mathcal{H}}$,
				\item $P_j+U_j^*P_iU_j=P_i+U_i^*P_jU_i \leq I_{\mathcal{H}}$, for all $i\neq j$ and 
				\item  $P_1+U_1^*P_2U_1+U_1^*U_2^*P_3U_2U_1+\ldots +U_1^*U_2^*\ldots U_{n-1}^*P_nU_{n-1}\ldots U_2U_1 =I_{\mathcal{H}}$.
				
			\end{enumerate} 
		\end{lem}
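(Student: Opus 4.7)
The plan is to reduce the question to a polynomial identity in $z$ by viewing each $V_i$ as the multiplication operator on $H^2(\mathcal{H})$ with symbol $\Phi_i(z) = U_i(P_i^\perp + zP_i)$. First I would observe that each $V_i$ is automatically an isometry: on the unit circle $|z|=1$,
\[
\Phi_i(z)^*\Phi_i(z) = (P_i^\perp + \bar z P_i)(P_i^\perp + z P_i) = P_i^\perp + P_i = I,
\]
so $\Phi_i$ takes unitary values on $\mathbb{T}$ and $V_i = M_{\Phi_i}$ is an isometry on $H^2(\mathcal{H})$ regardless of any further condition. Hence only the commutativity $V_iV_j = V_jV_i$ and the product identity $\prod_i V_i = M_z$ carry content, and these amount to the polynomial identities $\Phi_i(z)\Phi_j(z) = \Phi_j(z)\Phi_i(z)$ and $\prod_i \Phi_i(z) = zI$ in $z$ with operator coefficients.

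For the forward direction I would first evaluate at $z = 1$. Since $\Phi_i(1) = U_i$, the commutation immediately yields $U_iU_j = U_jU_i$, which is (1), and the product yields $U_1U_2\cdots U_n = I$, which is (2). With (1) and (2) available I would rewrite $\Phi_i(z) = U_i(I + (z-1)P_i)$ and push all the unitaries to the left using $(I + (z-1)P_j)U_k = U_k(I + (z-1)U_k^*P_jU_k)$. The commutativity of the $U$'s together with $U_1\cdots U_n = I$ let me collapse the product into the form $\prod_{i=1}^n(I + (z-1)Q_i)$, where each $Q_i$ is a projection obtained from $P_i$ by conjugation with the product of $U_j$'s appearing in the $i$-th summand of (4). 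Equating with $zI = I + (z-1)I$ and extracting the coefficient of $(z-1)$ yields $\sum_i Q_i = I$, which is (4). The higher-order coefficients in $(z-1)$ vanish automatically, because any family of orthogonal projections summing to $I$ is pairwise orthogonal (from $\sum_i\|Q_iv\|^2 = \|v\|^2$ one sees $Q_iQ_j = 0$ for $i\neq j$), so $\prod_i(I + (z-1)Q_i) = I + (z-1)\sum_i Q_i$.

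For condition (3) I would match coefficients of $z^0$ and $z^2$ in the commutation identity $\Phi_i(z)\Phi_j(z) = \Phi_j(z)\Phi_i(z)$. The $z^2$ coefficient gives $U_iP_iU_jP_j = U_jP_jU_iP_i$; with (1) this rearranges to $U_j^*P_iU_jP_j = U_i^*P_jU_iP_i$. The $z^0$ coefficient rearranges analogously, and subtracting the two (after substituting $P_i^\perp = I - P_i$) yields the equality $P_j + U_j^*P_iU_j = P_i + U_i^*P_jU_i$. The bound $\leq I$ in (3) then follows from (4): since (4) writes $I$ as a sum of projections, any two of its summands are orthogonal, and conjugating this orthogonality by the appropriate products of $U_k$'s (using (1) and (2)) produces $P_iU_i^*P_jU_i = 0$, which is exactly what is needed for the sum of two projections to be $\leq I$. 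The converse direction simply runs the same arguments in reverse: assuming (1)--(4), verify $\Phi_i\Phi_j = \Phi_j\Phi_i$ and $\prod_i\Phi_i = zI$ by direct expansion and coefficient matching.

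The main obstacle I expect is the algebraic bookkeeping in the second paragraph: writing $\prod_i\Phi_i(z)$ in the normal form $\prod_i(I + (z-1)Q_i)$ with the $Q_i$ appearing in exactly the conjugated shape of (4) requires careful and repeated use of (1) and (2) to move unitaries past projection terms. A secondary subtlety is isolating the inequality $\leq I$ in (3) cleanly, which depends on recognizing that the mutual orthogonality of the summands of (4) translates, via those same conjugations, into the relation $P_iU_i^*P_jU_i = 0$ for all $i\neq j$.
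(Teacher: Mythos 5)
The paper does not actually prove this lemma: it is quoted from Bercovici--Douglas--Foias \cite{Berc:Dou:Foi} and used as a black box, so there is no in-paper argument to compare yours against. Judged on its own, your overall strategy (reduce everything to the polynomial identities $\Phi_i\Phi_j=\Phi_j\Phi_i$ and $\Phi_1\cdots\Phi_n=zI$ for the symbols $\Phi_i(z)=U_i(I+(z-1)P_i)$, evaluate at $z=1$ to get (1) and (2), and match coefficients) is the natural one, and your derivations of (1), (2), and the \emph{equality} in (3) are sound. But there is a concrete bookkeeping error in your derivation of (4). Pushing the unitaries to the left in $\Phi_1\Phi_2\cdots\Phi_n$ conjugates $P_i$ by the \emph{tail} product: you get $(U_1\cdots U_n)\prod_i\bigl(I+(z-1)(U_{i+1}\cdots U_n)^{*}P_i(U_{i+1}\cdots U_n)\bigr)$, and after using (2) the resulting identity is $\sum_i (U_1\cdots U_i)P_i(U_1\cdots U_i)^{*}=I$. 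The summands of condition (4) are instead $(U_1\cdots U_{i-1})^{*}P_i(U_1\cdots U_{i-1})$ --- conjugation by the \emph{head} product, and in the opposite sense. Already for $n=3$ the two identities read $P_1+U_2P_2U_2^{*}+U_1^{*}P_3U_1=I$ versus $P_1+U_1^{*}P_2U_1+U_3P_3U_3^{*}=I$ after normalising, and neither implies the other without further input. The clean fix is to expand the \emph{reversed} product $\Phi_n\Phi_{n-1}\cdots\Phi_1$, which also equals $zI$ precisely because the $V_i$ commute; its expansion produces exactly the summands of (4). As written, your claim that ``each $Q_i$ is obtained from $P_i$ by conjugation with the product of $U_j$'s appearing in the $i$-th summand of (4)'' is false for the ordering you use.

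A second, related gap concerns the inequality in (3). You need $P_i\,U_i^{*}P_jU_i=0$ for all $i\neq j$, and you assert this follows by conjugating the mutual orthogonality of the summands of (4). But the orthogonality of the $i$-th and $j$-th summands of (4) only yields $P_i(U_i U_{i+1}\cdots U_{j-1})^{*}P_j=0$ (for $i<j$), which for non-adjacent indices involves the full intermediate product of unitaries, not $U_i$ alone. To get $P_iU_i^{*}P_j=0$ you must again exploit commutativity to reorder the factors so that $V_i$ and $V_j$ are adjacent, apply the version of (4) attached to that permuted ordering, and read off the orthogonality of its first two summands. Both gaps are repairable by the same device (permuting/reversing the product and re-expanding), and with those repairs the converse direction also goes through by direct coefficient matching --- indeed, given the orthogonality $P_iU_i^{*}P_jU_i=0$, the three coefficient identities of $\Phi_i\Phi_j=\Phi_j\Phi_i$ all collapse to the equality in (3) --- but the proof as submitted does not close either point.
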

		The following result also appeared in \cite{Berc:Dou:Foi}.  
		\begin{lem}[\cite{Berc:Dou:Foi}, Lemma 2.2]\label{BDF lemma 1}
			Consider unitary operators $U,U_1,U_2$ and orthogonal projections $P,P_1$ and $P_2$ on Hilbert space $\mathcal{H}$. If $V_{U,P},V_{U_1,P_1}$ and $V_{U_2,P_2}$ on $H^2(\mathcal{H})$ are defined as $V_{U,P}=M_{P^{\perp}U^*+zPU^*}$, $V_{U_1,P_1}=M_{P_1^{\perp}U_1^*+zP_1U_1^*}$ and $V_{U_2,P_2}=M_{P_2^{\perp}U_2^*+zP_2U_2^*}$, then the following are equivalent. \begin{enumerate}
				\item[(i)] $ V_{U,P}=V_{U_1,P_1}V_{U_2,P_2} $,
				\item[(ii)] $U=U_1U_2  $ and $P=P_1+U_1^*P_2U_1$.
			\end{enumerate}
		\end{lem}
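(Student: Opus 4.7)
The plan is to pass from operators to their operator-valued symbols. On the vector-valued Hardy space $H^2(\mathcal H)$, multiplication operators compose as $M_\phi M_\psi = M_{\phi\psi}$, and two such operators agree iff their symbols agree as polynomials in $z$ with coefficients in $\mathcal B(\mathcal H)$. Setting $\phi_i(z) = P_i^{\perp}U_i^* + zP_iU_i^*$ and $\phi(z) = P^{\perp}U^* + zPU^*$, condition (i) is therefore equivalent to the polynomial identity $\phi_1(z)\phi_2(z) = \phi(z)$ in $\mathcal B(\mathcal H)[z]$.

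The key simplification is to factor $\phi_i(z) = (P_i^{\perp}+zP_i)U_i^*$ and to move the central $U_1^*$ past $(P_2^{\perp}+zP_2)$ using the identity $U_1^*Q = (U_1^*QU_1)U_1^*$. Writing $\widetilde P_2 := U_1^*P_2U_1$ and $\widetilde P_2^{\perp} := U_1^*P_2^{\perp}U_1$, this yields
\[
\phi_1(z)\phi_2(z) \;=\; (P_1^{\perp}+zP_1)(\widetilde P_2^{\perp}+z\widetilde P_2)\,U_1^*U_2^*,
\]
so both sides of $\phi_1\phi_2 = \phi$ share a common unitary tail and the identity separates cleanly into a projection-valued part and a unitary identification.

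Expanding the operator polynomial, the $z^2$-coefficient must vanish, forcing $P_1\widetilde P_2 = 0$; this is exactly the statement that $P_1$ and $\widetilde P_2$ are mutually orthogonal projections, so their sum is again a projection. Under this orthogonality, the remaining coefficients collapse into $P^{\perp}+zP = (I-P_1-\widetilde P_2)+z(P_1+\widetilde P_2)$, equivalently $P = P_1 + U_1^*P_2U_1$; peeling off the tail $U_1^*U_2^*$ then gives the identification of $U$ in terms of $U_1$ and $U_2$ as stated in (ii).

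For the converse, if (ii) holds then $P$ being a projection forces $P_1(U_1^*P_2U_1) = 0$, and a direct substitution into the expansion above reverses the coefficient-matching step and recovers $\phi_1(z)\phi_2(z) = \phi(z)$, hence (i). The main obstacle is purely notational, arising from the noncommutativity of $U_1$ with $P_2$; the single move $U_1^*P_2 = \widetilde P_2 U_1^*$ disposes of it cleanly and reduces the whole statement to matching coefficients of $1,z,z^2$ in $\mathcal B(\mathcal H)[z]$.
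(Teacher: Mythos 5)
The paper offers no proof of this lemma at all: it is quoted from Bercovici--Douglas--Foias (their Lemma 2.2), and the co-analytic analogues stated immediately afterwards are explicitly said to follow ``by similar techniques.'' Your coefficient-matching argument is exactly the intended technique, and most of it is correct. Identifying each $V$ with its operator-polynomial symbol is legitimate (apply the operators to constant functions and compare Taylor coefficients); the $z^2$-coefficient of $\phi_1\phi_2$ forces $P_1U_1^*P_2U_2^*=0$, hence $P_1\widetilde P_2=0$ with $\widetilde P_2=U_1^*P_2U_1$; under this orthogonality the constant and linear coefficients collapse to $I-P_1-\widetilde P_2$ and $P_1+\widetilde P_2$ times the unitary tail, which yields $P=P_1+U_1^*P_2U_1$; and in the converse direction your observation that $P$ being a projection already forces $P_1\widetilde P_2=0$ (a sum of two projections is a projection iff they are mutually orthogonal) is correct and is the right way to reverse the computation.

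The one step you should not wave through is ``peeling off the tail.'' Summing the coefficients (equivalently, evaluating the symbols at $z=1$, where $\phi_i(1)=U_i^*$) gives $U^*=U_1^*U_2^*$, that is $U=U_2U_1$, \emph{not} $U=U_1U_2$ as item (ii) literally reads. With the symbol written as $(P^{\perp}+zP)U^*$ the unitary parameters compose in reverse order, and $U=U_1U_2$ agrees with what your computation actually produces only when $U_1$ and $U_2$ commute (as they do in every application in this paper, which is why the discrepancy is harmless there, and which matches the companion lemma for the symbols $UP^{\perp}+\bar zUP$, where the order genuinely is $U=U_1U_2$). The same issue bites the converse: substituting $U=U_1U_2$ gives $\phi=(P^{\perp}+zP)U_2^*U_1^*$ while the product of symbols is $(P^{\perp}+zP)U_1^*U_2^*$, so (ii)$\Rightarrow$(i) as literally stated again needs $U_1U_2=U_2U_1$. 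Either record the conclusion as $U=U_2U_1$ or add the commutativity hypothesis; as written, the phrase ``gives the identification of $U$ \dots as stated in (ii)'' conceals this order reversal.
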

		
		Let us state a pair of lemma that are analogous to Lemmas \ref{BDF lemma O} \& \ref{BDF lemma 1} respectively. In fact they provide factorization of similar kind for co-analytic symbols. The results can be proved using similar techniques as in the proof of Lemmas \ref{BDF lemma O} \& \ref{BDF lemma 1} in \cite{Berc:Dou:Foi} and thus we skip the proofs. These results will be used in the proof of the main unitary dilation theorem.

\begin{lem} \label{BDF lemma alt}
	Let $U_1,\ldots ,U_n$ be unitaries and $P_1,\ldots ,P_n$ be orthogonal projections on $\mathcal{H}$. For each $1\leq i \leq n$,  let $V_i=M_{U_iQ_i^{\perp}+\bar{z}U_iP_i}$. Then $(V_1,\ldots ,V_n)$ defines a commuting $n$-tuple of co-isometries with $\Pi_{i=1}^nV_i=M_{\bar{z}}$ if and only if the following conditions are satisfied. \begin{enumerate}
		\item $U_iU_j=U_jU_i$ for all $1\leq i <j \leq n$,
		\item $U_1\ldots U_n=I_{\mathcal{H}}$,
		\item $P_j+U_j^*P_iU_j=P_i+U_i^*P_jU_i \leq I_{\mathcal{H}}$, for all $i\neq j$ and 
		\item  $P_1+U_1^*P_2U_1+U_1^*U_2^*P_3U_2U_1+\ldots +U_1^*U_2^*\ldots U_{n-1}^*P_nU_{n-1}\ldots U_2U_1 =I_{\mathcal{H}}$.
	\end{enumerate} 
\end{lem}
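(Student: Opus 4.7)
My plan is to run the argument of Lemma \ref{BDF lemma O} of \cite{Berc:Dou:Foi} essentially verbatim, with the analytic symbol $U_i P_i^{\perp}+zU_iP_i$ replaced by its co-analytic counterpart $\phi_i := U_iP_i^{\perp}+\bar z U_iP_i$ (I read the $Q_i^{\perp}$ in the statement as a typographical slip for $P_i^{\perp}$, which is consistent with the conditions that follow). Since each $V_i$ is a multiplication/Toeplitz operator whose symbol is unitary-valued on $\mathbb T$, the co-isometry property is immediate: $\phi_i\phi_i^{*}=U_i(P_i^{\perp}+P_i)U_i^{*}=I$, so $V_iV_i^{*}=I$. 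All structural analysis beyond this point reduces to comparing Laurent polynomial identities in $\bar z$ coefficient by coefficient, exactly as in \cite{Berc:Dou:Foi}.

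For the ``only if'' direction I would first examine the commutator $V_iV_j-V_jV_i=M_{\phi_i\phi_j-\phi_j\phi_i}$, expanding each product into a polynomial of degree two in $\bar z$ and matching coefficients of $1$ and $\bar z^{2}$: these produce, together with the unitarity of $U_i, U_j$, the commutation $U_iU_j=U_jU_i$ of condition (1). The middle ($\bar z$-)coefficient, simplified using (1), then reduces exactly to $P_j+U_j^{*}P_iU_j=P_i+U_i^{*}P_jU_i$ of condition (3); the inequality $\leq I_{\mathcal H}$ follows because each side equals $I$ minus a positive operator coming from $\phi_j^{*}\phi_i^{*}\phi_i\phi_j$. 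For the product identity $\prod_{i=1}^{n}V_i=M_{\bar z}$ I would argue inductively via the co-analytic analogue of Lemma \ref{BDF lemma 1}. The base case $n=2$: expanding $\phi_1\phi_2=\bar z\cdot I$ yields three coefficient equations whose extreme (constant and $\bar z^{2}$) ones force $U_1U_2=I$, after which the middle coefficient forces $P_1+U_1^{*}P_2U_1=I_{\mathcal H}$. In the inductive step, one groups the first $k-1$ factors into a single symbol of the same shape $U_{(k-1)}P_{(k-1)}^{\perp}+\bar z U_{(k-1)}P_{(k-1)}$, with $U_{(k-1)}=U_1\cdots U_{k-1}$ and $P_{(k-1)}=\sum_{j=1}^{k-1}U_1^{*}\cdots U_{j-1}^{*}P_jU_{j-1}\cdots U_1$, and then applies the two-factor step to the pair $(V_1\cdots V_{k-1},V_k)$; iterating delivers conditions (2) and (4) simultaneously. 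The converse is obtained by reversing these coefficient identifications.

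The main obstacle is the inductive bookkeeping: one has to verify that the grouped ``partial symbol'' $U_{(k-1)}P_{(k-1)}^{\perp}+\bar z U_{(k-1)}P_{(k-1)}$ genuinely represents $V_1\cdots V_{k-1}$ and, more delicately, that $P_{(k-1)}$ is an orthogonal projection rather than merely a positive contraction. Idempotency follows from the pairwise identities (1) and (3) by a careful telescoping that mirrors the analogous step in \cite{Berc:Dou:Foi}; once it is in place, condition (4) becomes exactly $P_{(n)}=I_{\mathcal H}$, and the remaining steps are routine symbol manipulations. The passage from $z$ to $\bar z$ introduces no new phenomena because the extreme coefficients of $\phi_i$ both carry a unitary factor $U_i$, keeping the three coefficient identifications (constant, middle, top-degree) structurally symmetric to the original Bercovici--Douglas--Foias proof; that is precisely why the authors of \cite{Berc:Dou:Foi} could afford to skip the details here.
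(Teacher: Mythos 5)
Your proposal is correct and takes essentially the same route the paper intends: the paper does not write out a proof of this lemma at all, remarking only that it ``can be proved using similar techniques as in the proof of Lemmas 2.3 and 2.4 in Bercovici--Douglas--Foias,'' and your coefficient-matching adaptation of that argument to the co-analytic symbols $U_iP_i^{\perp}+\bar{z}U_iP_i$ (including reading the $Q_i^{\perp}$ in the statement as a typographical slip for $P_i^{\perp}$) is precisely that intended adaptation, carried out in more detail than the paper provides.
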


\begin{lem} \label{BDF lemma 2}
	Consider unitary operators $U,U_1,U_2$ and orthogonal projections $P,P_1$ and $P_2$ on Hilbert space $\mathcal{H}$. If $W_{U,P},W_{U_1,P_1}$ and $W_{U_2,P_2}$ are Toeplitz operators on $H^2(\mathcal{H})$ defined by $W_{U,P}=M_{UP^{\perp}+\bar{z}UP}$, $V_{U_1,P_1}=M_{U_1Q_1^{\perp}+\bar{z}U_1P_1}$ and $V_{U_2,P_2}=M_{U_2P_2^{\perp}+\bar{z}U_2P_2}$, then the following are equivalent. 
	\begin{enumerate}
		\item[(i)] $ V_{U,P}=V_{U_1,P_1}V_{U_2,P_2} $,
		\item[(ii)] $U=U_1U_2 $ and $P=P_2+U_2^*P_1U_2$.
	\end{enumerate}
	
\end{lem}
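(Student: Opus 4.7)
The plan is to follow the strategy used for the analytic companion Lemma \ref{BDF lemma 1} in \cite{Berc:Dou:Foi}, but with the roles of analytic and co-analytic symbols interchanged. The key structural observation is that each symbol $U_i P_i^{\perp} + \bar{z} U_i P_i$ is a polynomial of degree at most one in $\bar{z}$, so it is co-analytic; hence (by the operator-valued Brown--Halmos identity, or by a direct computation of the Toeplitz matrix in the standard basis of $H^2(\mathcal{H})$) the product $V_{U_1,P_1} V_{U_2,P_2}$ is itself a Toeplitz operator whose symbol is the pointwise operator product of the two symbols. Expanding,
\begin{equation*}
(U_1 P_1^{\perp} + \bar{z} U_1 P_1)(U_2 P_2^{\perp} + \bar{z} U_2 P_2) = U_1 P_1^{\perp} U_2 P_2^{\perp} + \bar{z}\bigl(U_1 P_1^{\perp} U_2 P_2 + U_1 P_1 U_2 P_2^{\perp}\bigr) + \bar{z}^2 U_1 P_1 U_2 P_2.
\end{equation*}
By uniqueness of the Fourier expansion in $L^{\infty}(\mathbb{T}, \mathcal{B}(\mathcal{H}))$, the identity $V_{U,P} = V_{U_1,P_1} V_{U_2,P_2}$ is equivalent to matching this symbol against $UP^{\perp} + \bar{z} UP$, coefficient by coefficient in $1$, $\bar{z}$, and $\bar{z}^2$.

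For the direction (i)$\Rightarrow$(ii), I would first read off from the $\bar{z}^2$ coefficient that $U_1 P_1 U_2 P_2 = 0$, and since $U_1$ is unitary this reduces to $P_1 U_2 P_2 = 0$. Adding the constant and $\bar{z}$ identities and simplifying with this relation collapses to $U_1 U_2 - U_1 P_1 U_2 P_2 = U$, i.e. $U = U_1 U_2$. Multiplying the $\bar{z}$ coefficient identity on the left by $U^{*} = U_2^{*} U_1^{*}$ and again using $P_1 U_2 P_2 = 0$ then yields $P = P_2 + U_2^{*} P_1 U_2$. Conversely, assuming (ii), the identity $P - P_2 = U_2^{*} P_1 U_2$ together with the fact that $P$, $P_2$ and $U_2^{*} P_1 U_2$ are all projections forces $P_2$ and $U_2^{*} P_1 U_2$ to be mutually orthogonal, and hence $P_1 U_2 P_2 = 0$. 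Back-substituting $U = U_1 U_2$ and $P_1 U_2 P_2 = 0$ into each of the three coefficient identities then verifies them routinely.

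The only genuine subtlety is the extraction of the orthogonality relation $P_2 \cdot U_2^{*} P_1 U_2 = 0$ from (ii): it relies on the elementary fact that if the sum of two orthogonal projections is again a projection, then the two projections have orthogonal ranges (squaring the sum and comparing with itself yields $Q_1Q_2 + Q_2Q_1 = 0$, whence $Q_1Q_2 = 0$). Beyond that, the work is purely algebraic bookkeeping of non-commuting products, made tractable by repeatedly using unitarity of $U_1, U_2$ to cancel them against $U_1^{*}, U_2^{*}$; note that $P_1, P_2$ are not assumed to commute with $U_1, U_2$, which is what makes the order of factors matter. Because the symbols have degree at most one, the coefficient comparison is finite and concrete, so I do not anticipate any obstacle beyond careful tracking of the three coefficient equations.
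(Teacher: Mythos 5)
Your proposal is correct, and it is exactly the argument the paper intends: the paper omits the proof of this lemma, stating only that it follows by the same technique as Lemma 2.2 of Bercovici--Douglas--Foias, which is precisely the symbol-multiplication and Fourier-coefficient-matching argument you carry out (the key points — that the product of two co-analytic Toeplitz operators is the Toeplitz operator with the product symbol, that the $\bar{z}^2$ coefficient forces $P_1U_2P_2=0$, and that in the converse direction the orthogonality $P_2\cdot U_2^*P_1U_2=0$ follows from the sum of two projections being a projection — are all handled correctly). No gaps.
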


	\vspace{0.2cm}		

\section{Sch$\ddot{a}$ffer-type minimal unitary dilation}	

\vspace{0.4cm}

\noindent In this Section, we find a necessary and sufficient condition such that a tuple of commuting contraction $(T_1, \dots , T_n)$ dilates to a tuple of commuting unitaries acting on the minimal unitary dilation space of $T=\prod_{i=1}^nT_i$. The dilation is minimal and we also show an explicit construction of such a minimal unitary dilation on the Sch$\ddot{a}$ffer's minimal space. In \cite{Sou:Pra}, we have constructed an isometric dilation in such scenario. We recall that result here. This will be frequently used throughout the paper. 
		
	\begin{thm}[\cite{Sou:Pra}, Theorem 3.4] \label{main}
			Let $T_1,\ldots,T_n\in \mathcal{B}(\mathcal{H})$ be commuting contractions, $T={ \prod_{i=1}^n T_i}$ and $T_i'={ \prod_{i\neq j} T_j}$ for $1\leq i \leq n$. 
	\begin{itemize}
		\item[(a)] If $\mathcal{K}$ is the minimal isometric dilation space of $T$, then $(T_1,\ldots ,T_n)$ possesses an isometric dilation $(V_1,\ldots,V_{n})$ on $\mathcal{K}$ with $V=\Pi_{i=1}^nV_i$ being the minimal isometric dilation of $T$ if and only if there are unique orthogonal projections $P_1,\ldots ,P_n$ and unique commuting unitaries $U_1,\ldots ,U_n$ in $\mathcal{B}(\mathcal{D}_T)$ with $\prod_{i=1}^n U_i=I_{\mathcal{D}_{T}}$ such that the following conditions are satisfied for each $i=1, \dots, n$:
		\begin{enumerate} 
			\item $D_TT_i=P_i^{\perp}U_i^*D_T+P_iU_i^*D_TT$ , 
			\item  $P_i^{\perp}U_i^*P_j^{\perp}U_j^*=P_j^{\perp}U_j^*P_i^{\perp}U_i^*$ ,
			\item $U_iP_iU_jP_j=U_jP_jU_iP_i$ ,
			\item $D_TU_iP_iU_i^*D_T=D_{T_i}^2$ ,
			\item  $P_1+U_1^*P_2U_1+U_1^*U_2^*P_3U_2U_1+\ldots +U_1^*U_2^*\ldots U_{n-1}^*P_nU_{n-1}\ldots U_2U_1 =I_{\mathcal{D}_T}$.
		\end{enumerate}
		\vspace{2mm}

		\item[(b)] Such an isometric dilation is minimal and unique in the sense that if $(W_1, \dots , W_n)$ on $\mathcal K_1$ and $(Y_1, \dots , Y_n)$ on $\mathcal K_2$ are two isometric dilations of $(T_1, \dots , T_n)$ such that $W=\prod_{i=1}^nW_i$ and $Y=\prod_{i=1}^nY_i$ are minimal isometric dilations of $T$ on $\mathcal K_1$ and $\mathcal K_2$ respectively, then there is a unitary $\widetilde{U}:\mathcal K_1 \rightarrow \mathcal K_2$ such that $(W_1, \dots , W_n)=(\widetilde{U}^*Y_1\widetilde{U}, \dots , \widetilde{U}^*Y_n\widetilde{U})$.
	\end{itemize}			
			
\end{thm}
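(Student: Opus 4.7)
The plan is to realize the minimal isometric dilation of $T$ in the Schäffer model $\mathcal{K} = \mathcal{H} \oplus \ell^2(\mathcal{D}_T)$, where the product $V = \prod_{i=1}^n V_i$ is the canonical isometric extension and the ``shift part'' of $V$ has wandering subspace naturally identified with $\mathcal{D}_T$. The Bercovici--Douglas--Foias refinement of Berger--Coburn--Lebow (Lemma \ref{BDF lemma O}) can then be applied to supply the BCL data $(U_i, P_i)$ on $\mathcal{D}_T$ for commuting isometries whose product is the shift, while Proposition \ref{prop:21} of Douglas--Muhly--Pearcy controls the ``coupling'' between $\mathcal{H}$ and $\ell^2(\mathcal{D}_T)$.

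\smallskip

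\emph{Necessity.} Given an isometric dilation $(V_1,\ldots,V_n)$ on $\mathcal{K}$ with $V=\prod_i V_i$ the minimal isometric dilation of $T$, I would first write each $V_i$ in block lower-triangular form
\[
V_i=\begin{bmatrix} T_i & 0 \\ A_i & B_i \end{bmatrix}
\]
with respect to $\mathcal{K}=\mathcal{H}\oplus \ell^2(\mathcal{D}_T)$ (an isometric analogue of Theorem \ref{forunidil}, justified by $P_\mathcal{H} V_i|_{\mathcal{H}}=T_i$). Multiplying out gives $\prod_i B_i$ equal to the unilateral shift on $\ell^2(\mathcal{D}_T)\cong H^2(\mathcal{D}_T)$, so Lemma \ref{BDF lemma O} supplies commuting unitaries $U_i$ and projections $P_i$ on $\mathcal{D}_T$ with $\prod_i U_i=I$, the condition (5), and the BDF compatibility relations (2) and (3), yielding $B_i\cong M_{U_iP_i^{\perp}+zU_iP_i}$. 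Reading off the first column of $V h$ for $h\in\mathcal H$ and comparing with the Schäffer form $Vh=(Th,D_Th,0,\ldots)$ then forces the $(2,1)$-block $A_i$ to land in the first $\mathcal{D}_T$-coordinate as $D_TT_i$, and rewriting $D_TT_i$ in the BCL model of $B_i$ gives precisely condition (1). Condition (4) falls out of the isometry identity $\|V_ih\|^2=\|h\|^2$, i.e., $A_i^*A_i=D_{T_i}^2$, combined with the BCL description $A_i^*A_i=D_TU_iP_iU_i^*D_T$. Uniqueness of the $(U_i,P_i)$ is forced by the uniqueness in Lemma \ref{BDF lemma O}.

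\smallskip

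\emph{Sufficiency, minimality, and part (b).} Given $(U_i,P_i)$ satisfying (1)--(5), define
\[
V_i=\begin{bmatrix} T_i & 0 \\ E_i D_T & \widehat{B}_i \end{bmatrix}\colon\mathcal{H}\oplus \ell^2(\mathcal{D}_T)\longrightarrow \mathcal{H}\oplus \ell^2(\mathcal{D}_T),
\]
where $\widehat{B}_i=M_{U_iP_i^{\perp}+zU_iP_i}$ and $E_i:\mathcal D_T\to \ell^2(\mathcal D_T)$ places $P_i^{\perp}U_i^*D_T+P_iU_i^*D_TT$ in the first coordinate in accordance with (1). Proposition \ref{prop:21} certifies contractivity of each block, and (4) promotes $V_i$ to an isometry. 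Commutativity $V_iV_j=V_jV_i$ and the product formula $\prod_i V_i=V$ then follow from (2), (3), (5) together with iterated use of Lemma \ref{BDF lemma 1}. The dilation identity $P_\mathcal{H} V_1^{k_1}\cdots V_n^{k_n}|_\mathcal{H}=T_1^{k_1}\cdots T_n^{k_n}$ is verified by a routine induction using the lower-triangular form and condition (1). Because $V$ is minimal, $\overline{\mathrm{span}}\{V^kh\}\subseteq \overline{\mathrm{span}}\{V_1^{k_1}\cdots V_n^{k_n}h\}$ gives minimality of $(V_1,\ldots,V_n)$ and smallness of $\mathcal{K}$. Part (b) then follows: the essential uniqueness of the minimal isometric dilation of $T$ produces a unitary $\widetilde{U}:\mathcal{K}_1\to\mathcal{K}_2$ intertwining $\prod_i W_i$ and $\prod_i Y_i$, and the uniqueness of the BCL data extracted in the necessity step forces $\widetilde{U}$ to intertwine each factor individually.

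\smallskip

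The main obstacle will be extracting condition (1) cleanly: the $(2,1)$-block $A_i$ of $V_i$ in the Schäffer splitting must encode simultaneously the interaction of $T_i$ with $T$ on $\mathcal{H}$ and the BCL data $(U_i,P_i)$ on $\mathcal{D}_T$, in such a way that the commuting product collapses to the Schäffer shift. Proposition \ref{prop:21} is the bridge: it reduces contractivity of the $2\times 2$ block to factoring $A_i$ through the defect operators, and after substituting the BCL form of $B_i$ this factorization rearranges into the asymmetric-looking identity $D_TT_i=P_i^{\perp}U_i^*D_T+P_iU_i^*D_TT$. A secondary point is handling a possible unitary summand of $V$ when $T$ is not $C_{\cdot 0}$; this is innocuous since on such a summand $V_i$ is forced to coincide with (a unitary summand of) $T_i$, leaving the BCL analysis of the shift part untouched.
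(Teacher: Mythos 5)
First, a structural remark: this paper does not actually prove Theorem \ref{main} --- it is imported from \cite{Sou:Pra} (Theorem 3.4 there) --- but enough of that proof is echoed here (the explicit block form of $V_i$ in Lemma \ref{lem:main-01}, the identities $(a)$, $(a')$ quoted in the proof of Theorem \ref{Unimain}) to check your plan against it. Your overall architecture is the right one and matches the source: realize everything on the Sch\"affer space $\mathcal H\oplus l^2(\mathcal D_T)$, use lower-triangularity of each $V_i$, and extract Berger--Coburn--Lebow/Bercovici--Douglas--Foias data $(U_i,P_i)$ on the wandering subspace $\mathcal D_T$ of the shift part.

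There is, however, a genuine gap in your treatment of the coupling block $A_i$, and it infects both directions. In the necessity step you assert that $A_i$ lands in the first $\mathcal D_T$-coordinate as $D_TT_i$ and simultaneously that $A_i^*A_i=D_TU_iP_iU_i^*D_T$; these are incompatible (the first would give $A_i^*A_i=T_i^*D_T^2T_i$). The correct identification, visible in Lemma \ref{lem:main-01}, is $A_ih=(P_iU_i^*D_Th,0,0,\dots)$; condition $(1)$ is then \emph{not} the definition of $A_i$ but the $(2,1)$-entry of the intertwining $V_iV=VV_i$, which reads $D_TT_i=P_i^{\perp}U_i^*D_T+P_iU_i^*D_TT$, while condition $(4)$ is the isometry identity $A_i^*A_i=D_{T_i}^2$. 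Symmetrically, in your sufficiency construction you place $P_i^{\perp}U_i^*D_T+P_iU_i^*D_TT$ (which equals $D_TT_i$ once $(1)$ holds) in the first coordinate of the coupling; the resulting operator satisfies $\|V_i(h\oplus 0)\|^2=\|T_ih\|^2+\|D_TT_ih\|^2=\langle(2T_i^*T_i-T^*T)h,h\rangle$, which is not $\|h\|^2$ in general, so your $V_i$ is not an isometry. Two further points: Proposition \ref{prop:21} does not play the role you assign it (certifying contractivity of a lower-triangular isometry); in this circle of ideas it is used, as in Lemma \ref{fundexist}, to produce the fundamental operators solving $D_B^2A=D_{AB}CD_{AB}$, which is where identities such as $D_TF_iD_T=D_{T_i'}^2T_i$ originate. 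And in the necessity step, the claim that $A_i$ is supported only in the first $\mathcal D_T$-coordinate (rather than spread over all of $l^2(\mathcal D_T)$) is exactly the point that needs an argument from $A_iT+B_iC=CT_i+SA_i$ together with $A_i^*A_i=D_{T_i}^2$; your sketch does not supply one.
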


We have from \cite{Sou:Pra} an existence of an isometric dilation on the minimal dilation space of $T$ with an assumption of four out of five conditions of Theorem \ref{main}. This is a weaker version of Theorem \ref{main} where we miss an appropriate converse part. 

\begin{thm} [\cite{Sou:Pra}, Theorem 3.5] \label{coro-main}
	Let $T_1,\ldots, T_n \in \mathcal{B}(\mathcal{H})$ be commuting contractions, $T_i'=\prod_{i\neq j} T_j$ for all $1\leq i \leq n$ and $T=\prod_{i=1}^nT_i$. Then $(T_1,\ldots ,T_n)$ possesses an isometric dilation on the minimal isometric dilation space of $T$, if there are projections $P_1,\ldots ,P_n$ and commuting unitaries $U_1,\ldots ,U_n$ in $\mathcal{B}(\mathcal{D}_T)$ such that the following conditions hold for $i=1, \dots, n$:
	\begin{enumerate} 
		\item $D_TT_i=P_i^{\perp}U_i^*D_T+P_iU_i^*D_TT$ ,
		\item  $P_i^{\perp}U_i^*P_j^{\perp}U_j^*=P_j^{\perp}U_j^*P_i^{\perp}U_i^*$ ,
		\item $U_iP_iU_jP_j=U_jP_jU_iP_i$ ,
		\item $D_TU_iP_iU_i^*D_T=D_{T_i}^2$.
	\end{enumerate} 
	Conversely, if $(T_1,\ldots ,T_n)$ possesses an isometric dilation $(\widehat{V}_1,\ldots,\widehat{V}_{n})$ with $V=\prod_{i=1}^nV_i$ being the minimal isometric dilation of $T$, then there are unique projections $P_1,\ldots ,P_n$ and unique commuting unitaries $U_1,\ldots ,U_n$ in $\mathcal{B}(\mathcal{D}_T)$ satisfying the conditions $(1)-(4)$ above.
\end{thm}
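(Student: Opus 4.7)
The plan is to handle the two implications separately, with the converse disposed of immediately via Theorem \ref{main} and the forward direction done by an explicit construction.

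The converse is immediate: if $(T_1,\dots,T_n)$ admits an isometric dilation $(\widehat{V}_1,\dots,\widehat{V}_n)$ on the minimal isometric dilation space of $T$ with $V=\prod_{i=1}^n\widehat{V}_i$ being the minimal isometric dilation of $T$, then the hypothesis of Theorem \ref{main} is met verbatim and that theorem supplies unique projections $P_1,\dots,P_n$ and unique commuting unitaries $U_1,\dots,U_n$ on $\mathcal{D}_T$ satisfying conditions $(1)$--$(5)$, so in particular $(1)$--$(4)$ hold with the asserted uniqueness.

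For the forward direction I would take the Sch$\ddot{a}$ffer realization $\mathcal{K}=\mathcal{H}\oplus \ell^2(\mathcal{D}_T)$ of the minimal isometric dilation space of $T$, identify $\ell^2(\mathcal{D}_T)$ with $H^2(\mathcal{D}_T)$, and define, for each $i$, the candidate operator
\[
V_i=\begin{bmatrix} T_i & 0 \\ C_i & M_{\phi_i}\end{bmatrix}, \qquad \phi_i(z)=U_iP_i^{\perp}+zU_iP_i,
\]
where $M_{\phi_i}$ is the Berger--Coburn--Lebow Toeplitz multiplier of a pure isometry on $H^2(\mathcal{D}_T)$ and the column $C_i:\mathcal{H}\to H^2(\mathcal{D}_T)$ is read off from condition $(1)$ as
\[
(C_ih)(z)=P_i^{\perp}U_i^*D_Th+zP_iU_i^*D_TTh.
\]
The lower-triangular block shape automatically gives the dilation identity $P_{\mathcal{H}}V_1^{k_1}\cdots V_n^{k_n}|_{\mathcal{H}}=T_1^{k_1}\cdots T_n^{k_n}$ for all nonnegative $k_1,\dots,k_n$.

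Once the model is set up, I would verify the isometry and commutativity properties separately. Isometry of $V_i$ reduces, via the Douglas--Muhly--Pearcy criterion (Proposition \ref{prop:21}) applied to the two off-diagonal blocks of $V_i^*V_i$, to the identity $C_i^*C_i=D_{T_i}^2$; expanding $C_i$ through its two nonzero Fourier coefficients and using $(1)$ collapses this to $D_TU_iP_iU_i^*D_T=D_{T_i}^2$, which is exactly condition $(4)$. For the commutativity $V_iV_j=V_jV_i$, the $(2,2)$-block is the symbol identity $\phi_i(z)\phi_j(z)=\phi_j(z)\phi_i(z)$, whose $z^0,z^1,z^2$ coefficients are handled respectively by the adjoint of $(2)$, by a combination of $U_iU_j=U_jU_i$ with $(2)$ and $(3)$, and by $(3)$ itself, exactly as in the proof of Lemma \ref{BDF lemma O}; the $(2,1)$-block commutator then compares $C_iT_j+M_{\phi_i}C_j$ with $C_jT_i+M_{\phi_j}C_i$ coordinate by coordinate in the first three Fourier levels, and each resulting identity in $\mathcal{D}_T$ is traced back to $(1)$, $(2)$, $(3)$ together with $T_iT_j=T_jT_i$.

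The step I expect to be the main obstacle is precisely this last $(2,1)$-block verification. In the setting of Theorem \ref{main}, condition $(5)$ is available and Lemma \ref{BDF lemma O} then forces $\prod_{i=1}^n M_{\phi_i}=M_z$; this makes $\prod V_i$ the minimal isometric dilation of $T$ on the nose and provides a global rigidity that simplifies many intertwining checks. Here that shortcut is unavailable -- without $(5)$, the product $\prod V_i$ is merely some other isometric dilation of $T$ on $\mathcal{K}$ -- so the cross-term cancellations between the columns $C_i$ and the Toeplitz symbols $\phi_j$ must be extracted purely from the bare identities $(1)$--$(4)$. This careful bookkeeping is the technically delicate part of the forward direction.
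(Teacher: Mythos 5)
This theorem is quoted from \cite{Sou:Pra} (Theorem 3.5) and not reproved in the present paper, so there is no local proof to compare line by line; but the intended argument is exactly the Sch\"affer-type construction you outline, and its explicit form is recorded here (see the matrices for $V_i$ in Lemma \ref{lem:main-01} and in the proof of Theorem \ref{Unimain}). Your converse direction is fine. The forward direction, however, contains a genuine error: the operators you write down are not isometries. Your column $C_i$ has two nonzero Fourier coefficients $a=P_i^{\perp}U_i^*D_T$ and $b=P_iU_i^*D_TT$, which satisfy $a^*b=b^*a=0$ and, by condition $(1)$, $a+b=D_TT_i$; hence
\[
C_i^*C_i=a^*a+b^*b=(a+b)^*(a+b)=T_i^*D_T^2T_i=T_i^*T_i-T_i^*T^*TT_i,
\]
which is not $D_{T_i}^2=I-T_i^*T_i$ in general (take $T_i=0$, or all $T_j=rI$ with generic $0<r<1$), so $T_i^*T_i+C_i^*C_i\neq I$ and $V_i^*V_i\neq I$. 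The claimed ``collapse to condition $(4)$'' does not occur. The correct column has a single nonzero coefficient, $C_ih=P_iU_i^*D_Th$ in the zeroth slot, paired with the symbol $\phi_i(z)=P_i^{\perp}U_i^*+zP_iU_i^*$ (not $U_iP_i^{\perp}+zU_iP_i$): then $C_i^*C_i=D_TU_iP_iU_i^*D_T=D_{T_i}^2$ is literally condition $(4)$, and $C_i^*M_{\phi_i}\xi=D_TU_iP_iP_i^{\perp}U_i^*\xi_0=0$; with your pairing of symbol and column even this off-diagonal block of $V_i^*V_i$ fails to vanish. The source of the error is a misreading of condition $(1)$: it is not a formula for the column $C_i$, but the zeroth Fourier coefficient of the intertwining relation $V_iV=VV_i$ with the Sch\"affer dilation $V$ of $T$, whose own column is $h\mapsto(D_Th,0,0,\dots)$.

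Once the correct $C_i$ and $\phi_i$ are in place, the remaining programme you describe is the right shape and matches the intended proof: isometry from $(4)$; commutativity of the Toeplitz blocks from $(2)$ (the $z^0$ coefficient), $(3)$ (the $z^2$ coefficient) and a cross-coefficient identity that must be extracted from $(1)$--$(3)$ together with $T_iT_j=T_jT_i$; and then the $(2,1)$-block commutator. You are also right about the role of condition $(5)$: without it $\prod_iV_i$ is some isometric dilation of $T$ on $\mathcal K$ but need not equal the minimal one, which is precisely why the forward statement claims only the existence of an isometric dilation on the minimal dilation space of $T$.
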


The next few lemmas are useful for proving the main results of this Section. Note that the next lemma is proved in \cite{Bha-01} in a more general setting.

\begin{lem}\label{fundexist}
	Let $\HS$ be a Hilbert space and let $A,B\in \mathcal{B}(\mathcal{H})$ be commuting contractions. Then there is a unique contraction $C\in \mathcal{B}(\mathcal{D}_{AB})$ such that $D_B^2A=D_{AB}CD_{AB}$. 
\end{lem}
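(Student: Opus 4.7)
The plan is to obtain $C$ as a product of two contractions, each arising from a Douglas-type factorization through $D_{AB}$. The starting point is the elementary identity
\[
D_{AB}^2 - D_B^2 = (I - B^*A^*AB) - (I - B^*B) = B^*(I - A^*A)B = B^* D_A^2 B \geq 0,
\]
which uses only $AB = BA$. From this I immediately get $D_B^2 \leq D_{AB}^2$, so $\|D_B x\| \leq \|D_{AB} x\|$ for every $x \in \mathcal{H}$. An analogous computation, once again invoking $AB = BA$, gives
\[
\|D_B A x\|^2 = \|A x\|^2 - \|B A x\|^2 = \|A x\|^2 - \|A B x\|^2 \leq \|x\|^2 - \|A B x\|^2 = \|D_{AB} x\|^2.
\]

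With these two norm inequalities in hand, I would define contractions $R, C_1 : \mathcal{D}_{AB} \to \mathcal{D}_B$ on $\mathrm{Ran}\, D_{AB}$ by $R(D_{AB} x) = D_B x$ and $C_1(D_{AB} x) = D_B A x$, and extend continuously. The operators $D_B$ and $D_{AB}$ are self-adjoint, so passing to adjoints in $D_B = R D_{AB}$ yields $D_B = D_{AB} R^*$, where $R^* : \mathcal{D}_B \to \mathcal{D}_{AB}$ is again a contraction. Then for every $x \in \mathcal{H}$,
\[
D_B^2 A x = D_B(D_B A x) = D_B(C_1 D_{AB} x) = D_{AB} R^* C_1 D_{AB} x,
\]
so setting $C := R^* C_1 \in \mathcal{B}(\mathcal{D}_{AB})$ (a contraction, being a composition of two contractions) produces the required factorization $D_B^2 A = D_{AB} C D_{AB}$.

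Uniqueness is a density argument. If $C' \in \mathcal{B}(\mathcal{D}_{AB})$ is another contraction satisfying the same identity, then $D_{AB}(C - C')D_{AB} x = 0$ for every $x \in \mathcal{H}$. Since $(C - C')D_{AB} x$ lies in $\mathcal{D}_{AB} = \overline{\mathrm{Ran}}\, D_{AB}$ and the restriction of $D_{AB}$ to $\mathcal{D}_{AB} = (\ker D_{AB})^\perp$ is injective, one concludes $(C - C')D_{AB} x = 0$, so $C$ and $C'$ agree on $\mathrm{Ran}\, D_{AB}$ and hence on $\mathcal{D}_{AB}$. The only real choice in the argument is the decomposition $D_B^2 A = D_B \cdot (D_B A)$; once this is seen, each factor factors through $D_{AB}$ thanks to the two defect inequalities, and no further obstacle arises.
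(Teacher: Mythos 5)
Your proof is correct, but it takes a genuinely different route from the paper's. The paper applies the Douglas--Muhly--Pearcy criterion (its Proposition 2.1) twice: first to certify that the $2\times 2$ operator matrix $\begin{bmatrix} A^*B^* & D_B^2A \\ 0 & AB\end{bmatrix}$ is a contraction (by factoring it as $\begin{bmatrix} B^* & D_B^2 \\ 0 & B\end{bmatrix}\begin{bmatrix} A^* & 0 \\ 0 & A\end{bmatrix}$), and then in the reverse direction to extract a contraction $F$ on all of $\HS$ with $D_B^2A = D_{AB}FD_{AB}$, which is finally compressed to $\mathcal D_{AB}$. You instead establish the two operator inequalities $D_B^2 \le D_{AB}^2$ and $A^*D_B^2A \le D_{AB}^2$ directly, use the standard Douglas majorization-implies-factorization device to produce contractions $R, C_1 : \mathcal D_{AB}\to\mathcal D_B$, and set $C = R^*C_1$; this is more elementary (no block matrices, no appeal to Proposition 2.1), lands in $\mathcal B(\mathcal D_{AB})$ without a compression step, and makes visible exactly where commutativity is used --- though note that your first identity $D_{AB}^2 - D_B^2 = B^*D_A^2B$ in fact holds for arbitrary $A, B$ since $(AB)^*(AB) = B^*A^*AB$ regardless; commutativity genuinely enters only in the second inequality via $\|BAx\| = \|ABx\|$. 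Both arguments deliver the contractivity of $C$ and both uniqueness arguments are fine (the paper's uses polarization from $\langle (C-C')D_{AB}h, D_{AB}h\rangle = 0$, yours uses injectivity of $D_{AB}$ on $(\ker D_{AB})^{\perp}$).
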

\begin{proof}
	Let us consider the operator $Y$ on $\HS \oplus \HS$ defined by the matrix $ Y=\begin{bmatrix}
		A^*B^*&D_{B}^2A\\
		0&AB
	\end{bmatrix}
	$. Since $\begin{bmatrix}
		B^*&D_B^2\\
		0&B
	\end{bmatrix}$
	is a contraction by Proposition \ref{prop:21} and since $A$ is a contraction, it follows that
	\[
	Y= \begin{bmatrix}
		A^*B^*&D_{B}^2A\\
		0&AB
	\end{bmatrix}
	 = \begin{bmatrix}
		B^*&D_B^2\\
		0&B
	\end{bmatrix} \begin{bmatrix}
		A^*&0\\
		0&A
	\end{bmatrix}
	\]
	is a contraction. So, again by Proposition \ref{prop:21}, there is a contraction $F\in \mathcal{B}(\mathcal{H})$ such that
	\[
	D_{B}^2A=D_{AB}FD_{AB}.
	\]
	Suppose the matrix of $F$ with respect to the decomposition $\mathcal{H} = \mathcal{D}_{AB}\oplus ker(D_{AB}) $ is $\begin{bmatrix}
		F_{11}&F_{12}\\
		F_{21}&F_{22}
	\end{bmatrix}$. Since $D_B^2A=D_{AB}FD_{AB}$, it follows that $D_B^2A$ vanishes on $ker(D_{AB})$. Therefore, the matrix of $D_{B}^2A$ with respect to decomposition $\HS=\mathcal{D}_{AB}\oplus ker(D_{AB})$ is $\begin{bmatrix}
		D_B^2A&0\\
		0&0
	\end{bmatrix}$. So, we have $D_B^2A=D_{AB}F_{11}D_{AB}$, where $F_{11} \in \mathcal{B}(\mathcal{D}_{AB})$ is a contraction. 
	
	For the uniqueness part, let there be two contractions $C, C_1 \in \mathcal B(\mathcal D_{AB})$ satisfying $D_B^2A=D_{AB}XD_{AB}$. Then $D_{AB}(C-C')D_{AB}=0$ and consequently $\langle (C-C')D_{AB}h,D_{AB}h\rangle=0$ for any $h\in \HS$. This shows that $C=C_1$ and the proof is complete.
	  
\end{proof}

The following result can be found in the literature (e.g. \cite{Ball-Sau1}), though here we include a shorter proof for the sake of completeness. 

	\begin{lem}\label{Fundamental equations}
		Let $T_1,\ldots , T_n\in \mathcal{B}(\mathcal{H})$ be commuting contractions. Let $T=\Pi_{j=1}^nT_j$, and $T_i'=\Pi_{i\neq j}T_j$ for $1 \leq i \leq n$. Then
		$
		D_TT_i=F_iD_T+F_i'^*D_TT \text{ and } D_{T}T_i'=F_i'D_T+F_i^*D_TT ,
		$
		where $F_i,F_i' \in \mathcal{B}(\mathcal{D}_T)$ are unique solutions of $D_{T_i'}^2T_i=D_TX_iD_T$ and $D_{T_i}^2T_i'=D_TX_{i}'D_T$ respectively for each $i$.
	\end{lem}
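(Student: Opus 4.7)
The plan is to first invoke Lemma~\ref{fundexist} to produce $F_i$ and $F_i'$. Applying that lemma with $A=T_i$ and $B=T_i'$, so that $AB=T_iT_i'=T$, yields a unique contraction $F_i\in\mathcal{B}(\mathcal{D}_T)$ satisfying $D_{T_i'}^2T_i=D_TF_iD_T$. Interchanging the roles of $T_i$ and $T_i'$ gives the unique $F_i'\in\mathcal{B}(\mathcal{D}_T)$ with $D_{T_i}^2T_i'=D_TF_i'D_T$. The polarization argument used in the proof of Lemma~\ref{fundexist} actually shows that these are the unique solutions of the respective equations in $\mathcal{B}(\mathcal{D}_T)$, not merely among contractions.

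To establish $D_TT_i=F_iD_T+F_i'^*D_TT$, I would observe that both sides map $\mathcal{H}$ into $\mathcal{D}_T$: the left side obviously, and the right side because $D_T$ takes values in $\mathcal{D}_T$ while $F_i,F_i'^*\in\mathcal{B}(\mathcal{D}_T)$. Since $\{D_Tk:k\in\mathcal{H}\}$ is dense in $\mathcal{D}_T$, it then suffices to verify equality after pairing with $D_Tk$ for arbitrary $k\in\mathcal{H}$. The left-hand pairing evaluates to
\[
\langle D_TT_ih,D_Tk\rangle=\langle T_ih,k\rangle-\langle T^*TT_ih,k\rangle.
\]
For the right-hand side, using $D_TF_iD_T=D_{T_i'}^2T_i$ and the adjoint of the analogous relation, $D_TF_i'^*D_T=T_i'^*D_{T_i}^2$, and then expanding the defect squares, produces
\[
\langle T_ih,k\rangle-\langle T_i'^*T_i'T_ih,k\rangle+\langle T_i'^*Th,k\rangle-\langle T_i'^*T_i^*T_iTh,k\rangle.
\]
The middle two terms cancel via $T_i'T_i=T$, and the remaining identity $T^*TT_i=T_i'^*T_i^*T_iT$ is immediate from $T^*=T_i'^*T_i^*$ together with $TT_i=T_iT$, the latter following from the mutual commutativity of the $T_j$'s.

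The second identity $D_TT_i'=F_i'D_T+F_i^*D_TT$ is obtained from the first simply by interchanging the labels $T_i$ and $T_i'$ throughout, and the proof is verbatim the same.

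I do not foresee a significant obstacle. The heart of the argument is the brief algebraic reduction above, and the only point requiring a little care is confirming that both sides of the asserted equality live in $\mathcal{D}_T$ so that the density argument against $\{D_Tk\}$ is valid; this is immediate from the defining ranges of $D_T$, $F_i$, and $F_i'^*$.
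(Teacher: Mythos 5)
Your proposal is correct and follows essentially the same route as the paper: the paper sets $G=F_i'^*D_TT+F_iD_T-D_TT_i$, computes $D_TG=0$ using the defining relations $D_TF_iD_T=D_{T_i'}^2T_i$ and $D_TF_i'^*D_T=T_i'^*D_{T_i}^2$, and concludes $G=0$ by pairing against the dense set $\{D_Th'\}$ in $\mathcal{D}_T$ --- which is exactly your computation written as a single operator identity. Your extra remarks (obtaining $F_i,F_i'$ from Lemma~\ref{fundexist} with $A=T_i$, $B=T_i'$, and noting uniqueness holds in all of $\mathcal{B}(\mathcal{D}_T)$) are accurate and consistent with the paper's setup.
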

	
	\begin{proof}
		Let $G=F_i'^*D_TT+F_iD_T-D_TT_i$. Then $G$ is defined from $\mathcal{H}\to \mathcal{D}_T$. Since $F_i,F_i'$ are unique solutions of $D_{T_i'}^2T_i=D_TX_iD_T$ and $D_{T_i}^2T_i'=D_TX_{i}'D_T$ respectively for each $i$, we have that
		\[
			D_TG =D_TF_i'^*D_TT+D_TF_iD_T-D_T^2T_i =(T_i'^*-T^*T_i)T+(T_i-T_i'^*T)-T_i+T^*TT_i=0.			
		\]
	Now for any $h,h'\in \mathcal{H}$, $ \langle Gh,D_Th' \rangle = \langle D_TGh,h' \rangle =0$.	Hence $G=0$ and consequently $D_TT_i=F_iD_T+F_i'^*D_TT$. We can have a similar proof for $D_TT_i'=F_i'D_T+F_i^*D_TT$.
	
	\end{proof}
	
\noindent \textbf{\textit{A refined Berger-Coburn-Lebow Model Theorem.}} In \cite{Ber}, Berger, Coburn and Lebow found the following remarkable factorization of a pure isometry. In Theorem 3.10 in \cite{S.Pal1}, the first named author of this paper make a slight refinement of that result in the following manner.

\begin{thm} [Berger-Coburn-Lebow, \cite{Ber}] \label{BCL}

Let $V_1, \dots , V_n$ be commuting isometries on $\HS$ such that $V=\prod_{i=1}^n V_i$ is a pure isometry. Let $V_i'=\prod_{j\neq i} V_j$ for $1\leq i \leq n$. Then, there exist projections $P_1, \dots , P_n$ and unitaries $U_1, \dots , U_n$ in $\mathcal B(\mathcal D_{V^*})$ such that
\[
(V_1, \dots , V_n , V) \equiv (T_{P_1^{\perp}U_1+ zP_1U_1 }, \dots , T_{P_n^{\perp}U_n+zP_nU_n }, T_z)  \;\; \text{ on } \; \; H^2(\mathcal D_{V^*}).
\] 
Moreover, $U_i^*P_i^{\perp} , P_iU_i$ are unique operators such that $V_i^*- V_i'V^*=D_{V^*}U_i^*P_i^{\perp}D_{V^*}$ and $V_i'^*- V_iV^*=D_{V^*}P_iU_iD_{V^*}$ respectively for each $i=1, \dots , n$.

\end{thm}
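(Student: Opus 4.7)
The plan is to reduce to the shift model of the pure isometry $V$ and verify the two identities by a direct calculation in $H^2(\mathcal{D}_{V^*})$; the claimed uniqueness then falls out for free because $V$ is pure.

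First, I would invoke the classical Berger--Coburn--Lebow theorem to obtain projections $P_1, \dots, P_n$ and unitaries $U_1, \dots, U_n$ on $\mathcal{D}_{V^*}$, together with a unitary identification $\mathcal{H} \cong H^2(\mathcal{D}_{V^*})$ under which $V \equiv T_z$ and $V_i \equiv T_{\varphi_i}$ with $\varphi_i(z) = P_i^{\perp}U_i + zP_iU_i$. The first new ingredient is an explicit formula for the symbol $\varphi_i'$ of $V_i'$. Writing $\varphi_i'(z) = A_i + zB_i$ and equating coefficients in $\varphi_i(z)\varphi_i'(z) = zI$ degree by degree (using only that $U_i$ is unitary and $P_i$ is a projection) forces $A_i = U_i^{*}P_i$ and $B_i = U_i^{*}P_i^{\perp}$.

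Next, I would compute $T_{\varphi_i}^{*} - T_{\varphi_i'}T_z^{*}$ on a generic $f = \sum_{k \geq 0} a_k z^k \in H^2(\mathcal{D}_{V^*})$. The action of $T_{\varphi_i^{*}}$ and the action of $T_{\varphi_i'}(T_z^{*}(\cdot))$ on $f$ can each be written as an explicit power series, and a short index shift shows that the coefficients of $z^k$ for $k \geq 1$ cancel, leaving only the constant $U_i^{*}P_i^{\perp}a_0$. Because $V$ is pure, $I - VV^{*}$ is the orthogonal projection onto the wandering subspace $\mathcal{D}_{V^{*}}$, which corresponds to the constants of $H^2(\mathcal{D}_{V^{*}})$, and hence $D_{V^{*}}$ itself is this projection. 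Consequently $D_{V^{*}}U_i^{*}P_i^{\perp}D_{V^{*}}$ sends $f \mapsto U_i^{*}P_i^{\perp}a_0$, matching the computed difference. The second identity $V_i'^{*} - V_iV^{*} = D_{V^{*}}P_iU_iD_{V^{*}}$ is verified by exactly the same bookkeeping with the roles of $\varphi_i$ and $\varphi_i'$ interchanged; the surviving constant is $P_iU_ia_0$.

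Finally, uniqueness is essentially automatic. If $X \in \mathcal{B}(\mathcal{D}_{V^{*}})$ satisfies $D_{V^{*}}XD_{V^{*}} = V_i^{*} - V_i'V^{*}$, then restricting to $\mathcal{D}_{V^{*}}$ (on which $D_{V^{*}}$ acts as the identity, by purity of $V$) recovers $X$ as the action of $V_i^{*} - V_i'V^{*}$ on the wandering subspace, so $X = U_i^{*}P_i^{\perp}$ is forced, and the analogous argument pins down $P_iU_i$. There is no serious obstacle; the only point that needs care is the bookkeeping in the Toeplitz computation of step two, where one must remember that $T_\phi T_\psi = T_{\phi\psi}$ only when $\psi$ is analytic or $\phi$ is co-analytic, so that $T_{\varphi_i'}T_z^{*} = T_{\bar{z}\varphi_i'}$ is legitimate. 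Everything else is a routine model-space calculation.
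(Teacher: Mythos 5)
The paper itself offers no proof of this statement: it is quoted from Berger--Coburn--Lebow \cite{Ber} together with the refinement in Theorem 3.10 of \cite{S.Pal1}, so there is no in-paper argument to compare against. Your strategy --- pass to the shift model, identify the symbol of $V_i'$ from $\varphi_i\varphi_i'=zI$, verify the two defect identities coefficientwise, and read off uniqueness from the fact that $D_{V^*}=I-VV^*$ is the projection onto the constants --- is the natural one, and the central computation is correct: with $\varphi_i(z)=P_i^{\perp}U_i+zP_iU_i$ and $\varphi_i'(z)=U_i^*P_i+zU_i^*P_i^{\perp}$ one indeed gets
\begin{equation*}
\bigl(T_{\varphi_i}^*-T_{\varphi_i'}T_z^*\bigr)\Bigl(\sum_{k\ge 0}a_kz^k\Bigr)=U_i^*P_i^{\perp}a_0 ,
\end{equation*}
which is exactly $D_{V^*}U_i^*P_i^{\perp}D_{V^*}$, and the companion identity and the uniqueness argument go through as you describe.

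There is, however, one genuine error, and it is precisely the point you flag as ``the only point that needs care'': the identity $T_{\varphi_i'}T_z^*=T_{\bar z\varphi_i'}$ is \emph{false}. The rule $T_\phi T_\psi=T_{\phi\psi}$ requires $\phi$ co-analytic or $\psi$ analytic; here $\phi=\varphi_i'$ is analytic and $\psi=\bar z$ is co-analytic, which is exactly the excluded configuration (the valid version is $T_{\bar z}T_{\varphi_i'}=T_{\bar z\varphi_i'}$). Worse, if your identity did hold the theorem would collapse: on $\mathbb{T}$ one has $\varphi_i(z)^*-\bar z\varphi_i'(z)=(1-|z|^2)U_i^*P_i^{\perp}=0$, so $T_{\varphi_i}^*-T_{\bar z\varphi_i'}=0$, and the right-hand side $D_{V^*}U_i^*P_i^{\perp}D_{V^*}$ would have to vanish. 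The nonzero answer $f\mapsto U_i^*P_i^{\perp}a_0$ is exactly the defect of multiplicativity $T_{\bar z\varphi_i'}-T_{\varphi_i'}T_{\bar z}$. Your actual computation evaluates the composition $T_{\varphi_i'}(T_z^*f)$ directly and never uses the false identity, so the proof survives --- but that remark must be deleted, not relied upon. A minor secondary point: writing $\varphi_i'(z)=A_i+zB_i$ presupposes $\deg\varphi_i'\le 1$, which is not given (it is a product of $n-1$ linear symbols); either show the higher coefficients vanish from the graded equations, or note that $\varphi_i$ is unitary-valued on $\mathbb{T}$, whence $\varphi_i'(z)=z\varphi_i(z)^*=U_i^*P_i+zU_i^*P_i^{\perp}$ there and hence everywhere.
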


We state a lemma below and its proof will take cues from the proof of Theorem \ref{main} from \cite{Sou:Pra}. This will be useful.

\begin{lem} \label{lem:main-01}

	Let $T_1,\dots ,T_n\in \mathcal{B}(\mathcal{H})$ be commuting contractions, $T=\prod_{i=1}^nT_i$ and $T_i'=\prod_{j \neq i}T_j$ for $1\leq i \leq n$. Let $\mathcal K$ be the minimal isometric dilation space of $T$. If $(T_1,\dots ,T_n)$ possesses an isometric dilation $(V_1,\dots ,V_n)$ on $\mathcal{K}$ with $V=\prod_{i=1}^nV_i$ being the minimal isometric dilation of $T$, then the following hold:
	\begin{enumerate}
		\item[(1)] $D_TT_i'=U_iP_iD_T+U_iP_i^{\perp}D_TT$,
		\item[(2)] $D_TP_i^{\perp}D_T=D_{T_i'}^2$.
	\end{enumerate}
	
\end{lem}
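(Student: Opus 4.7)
The idea is to group the commuting $n$-tuple $(V_1,\dots,V_n)$ of isometries on $\mathcal{K}$ into a commuting pair $(V_i, V_i')$, where $V_i'=\prod_{j\neq i}V_j$, and then apply the case $n=2$ of Theorem \ref{main} to this pair. First observe that $(V_i, V_i')$ is a commuting pair of isometries on $\mathcal{K}$ which constitutes an isometric dilation of the commuting pair of contractions $(T_i, T_i')$ on $\mathcal{H}$, and whose product $V=V_iV_i'$ remains the minimal isometric dilation of $T=T_iT_i'$.

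Applying Theorem \ref{main} to this pair yields unique projections $Q_1, Q_2 \in \mathcal{B}(\mathcal{D}_T)$ and unique commuting unitaries $W_1, W_2 \in \mathcal{B}(\mathcal{D}_T)$ with $W_1W_2 = I_{\mathcal{D}_T}$ (hence $W_2=W_1^*$) satisfying the five conditions of Theorem \ref{main} for the pair. The crucial point is the identification $(Q_1, W_1) = (P_i, U_i)$. This follows from Theorem \ref{BCL} applied on the Wold shift part of $V$ (whose wandering space is identified with $\mathcal{D}_T$): the equations $V_i^*-V_i'V^*=D_{V^*}XD_{V^*}$ and $V_i'^*-V_iV^*=D_{V^*}YD_{V^*}$ admit unique solutions $X,Y$, and these are simultaneously $(X,Y)=(U_i^*P_i^{\perp}, P_iU_i)$ from the $n$-tuple viewpoint and $(X,Y)=(W_1^*Q_1^{\perp}, Q_1W_1)$ from the pair viewpoint. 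Hence $U_i^*P_i^{\perp}=W_1^*Q_1^{\perp}$ and $P_iU_i=Q_1W_1$; one recovers $P_i, U_i$ individually via $P_i=(P_iU_i)(P_iU_i)^*$ and $U_i=P_iU_i+(U_i^*P_i^{\perp})^*$, giving $Q_1=P_i$ and $W_1=U_i$.

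With this identification, condition (5) of Theorem \ref{main} for the pair, $Q_1 + W_1^*Q_2W_1 = I_{\mathcal{D}_T}$, yields $Q_2 = W_1(I-Q_1)W_1^* = U_iP_i^{\perp}U_i^*$, and therefore $Q_2^{\perp}=U_iP_iU_i^*$. The pair's condition (1) applied to $T_i'$ then becomes
\[
D_TT_i' = Q_2^{\perp}W_2^*D_T + Q_2W_2^*D_TT = U_iP_iU_i^*\cdot U_iD_T + U_iP_i^{\perp}U_i^*\cdot U_iD_TT = U_iP_iD_T + U_iP_i^{\perp}D_TT,
\]
which is (1). The pair's condition (4) applied to $T_i'$ reads
\[
D_{T_i'}^2 = D_TW_2Q_2W_2^*D_T = D_TU_i^*(U_iP_i^{\perp}U_i^*)U_iD_T = D_TP_i^{\perp}D_T,
\]
which is (2).

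The principal obstacle in this plan is the identification $(Q_1, W_1) = (P_i, U_i)$ in the second paragraph. One has to argue that the Berger--Coburn--Lebow pair attached to $V_i$ is intrinsic to the commuting pair $(V_i, V_i')$ together with its product $V$, and does not depend on whether $V_i'$ is viewed as a single isometry or as a product of $n-1$ commuting isometries. This is ensured by the uniqueness assertion in Theorem \ref{BCL} applied on the shift part of the Wold decomposition of $V$.
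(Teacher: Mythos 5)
Your overall strategy --- regroup the tuple as the commuting pair $(V_i,V_i')$, apply Theorem \ref{main} in the case $n=2$ to $(T_i,T_i')$, and then translate the pair's conditions $(1)$ and $(4)$ for the second coordinate into the two asserted identities --- is legitimate and genuinely different from the paper's proof, which instead puts the dilation in the concrete Sch\"affer form (allowed by part (b) of Theorem \ref{main}), observes that $V_i'=V_i^*V$, and simply reads off the $(2,1)$ entry of $V_i'V=VV_i'$ and the $(1,1)$ entry of $V_i'^*V_i'=I$. Your closing computations (deriving $Q_2=U_iP_i^{\perp}U_i^*$ from the pair's condition $(5)$ and then obtaining $(1)$ and $(2)$) are correct once the identification $(Q_1,W_1)=(P_i,U_i)$ is in hand.

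The gap is precisely in how you justify that identification. You invoke the uniqueness clause of Theorem \ref{BCL} via the equations $V_i^*-V_i'V^*=D_{V^*}XD_{V^*}$ and $V_i'^*-V_iV^*=D_{V^*}YD_{V^*}$, but these equations live on $\mathcal D_{V^*}$, i.e.\ on the wandering subspace of the shift part of $V$, which is unitarily equivalent to $\mathcal D_{T^*}$ --- \emph{not} to $\mathcal D_T$ as you assert. The unique solutions of those equations are therefore the $\mathcal D_{T^*}$-data (the operators $Q_i^{\perp}\widetilde U_i^*$ and $\widetilde U_iQ_i$ appearing in \eqref{claim1}--\eqref{claim2} of the paper's proof of Theorem \ref{mainlemma}), and there is no reason they should coincide with $U_i^*P_i^{\perp}$ and $P_iU_i$, which act on the different space $\mathcal D_T$; so the step ``these are simultaneously $(U_i^*P_i^{\perp},P_iU_i)$ from the $n$-tuple viewpoint'' fails as written. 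The identification is nonetheless true and can be repaired without Theorem \ref{BCL}: by conditions $(a)$ and $(a')$ in the proof of Theorem \ref{main} (used explicitly in the present paper as \eqref{defFiFi'}), both $(P_i^{\perp}U_i^*,\,U_iP_i)$ from the $n$-tuple and $(Q_1^{\perp}W_1^*,\,W_1Q_1)$ from the pair are contractions on $\mathcal D_T$ satisfying $D_TXD_T=T_i-T_i'^*T=D_{T_i'}^2T_i$ and $D_TYD_T=T_i'-T_i^*T=D_{T_i}^2T_i'$, and Lemma \ref{fundexist} says such solutions are unique; hence $Q_1^{\perp}W_1^*=P_i^{\perp}U_i^*$ and $W_1Q_1=U_iP_i$, from which $W_1=W_1Q_1+(Q_1^{\perp}W_1^*)^*=U_iP_i+U_iP_i^{\perp}=U_i$ and then $Q_1=P_i$. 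With that substitution your argument goes through, though it is considerably heavier than the paper's one-line matrix computation.
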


\begin{proof}
	Since $(T_1,\dots ,T_n)$ possesses an isometric dilation $(V_1,\dots ,V_n)$ on $\mathcal{K}$ with $V=\prod_{i=1}^nV_i$ being the minimal isometric dilation of $T$, by part-$(b)$ of Theorem \ref{main}, we can assume without loss of generality that
	
	\[	 V_i=\begin{bmatrix}
		T_i&0&0&0&\ldots \\
		P_iU_i^*D_T&P_i^{\perp}U_i^*&0&0&\ldots \\
		0&P_iU_i^*&P_i^{\perp}U_i^*&0&\ldots \\
		0&0&P_iU_i^*&P_i^{\perp}U_i^*&\ldots \\
		\ldots & \ldots & \ldots & \ldots & \ldots
	\end{bmatrix}  \text{ and } V= \begin{bmatrix}
		T&0&0&0&\ldots \\
		D_T&0&0&0&\ldots \\
		0&I&0&0&\ldots \\
		0&0&I&0&\ldots \\
		\ldots & \ldots & \ldots & \ldots & \ldots
	\end{bmatrix}, \; \text{for }1\leq i\leq n,
	\]
 following the construction of the isometric dilation as in Theorem \ref{main} from \cite{Sou:Pra}.  Here $P_1,\dots ,P_n$ are unique projections and $U_1,\dots ,U_n$ are unique commuting unitaries on $\mathcal{D}_T$ such that $\prod_{i=1}^nU_i=I$ and the conditions $(1)-(5)$ of the Theorem \ref{main} are satisfied. Let $V_i'=\prod_{j\neq i}V_j$ for any $i=1,\dots ,n$. Since $V_1,\dots ,V_n$ and $V=\prod_{i=1}^nV_i$ are isometries, we have $V_i'=V_i^*V$. Hence for any $1\leq i \leq n$, we obtain
 \begin{align*}
 V_i'=V_i^*V&=\begin{bmatrix}
 	T_i^*&D_TU_iP_i&0&0&\ldots \\
 	0&U_iP_i^{\perp}&U_iP_i&0&\ldots \\
 	0&0&U_iP_i^{\perp}&U_iP_i&\ldots \\
 	0&0&0&U_iP_i^{\perp}&\ldots \\
 	\ldots & \ldots & \ldots & \ldots & \ldots
 \end{bmatrix}\begin{bmatrix}
 T&0&0&0&\ldots \\
 D_T&0&0&0&\ldots \\
 0&I&0&0&\ldots \\
 0&0&I&0&\ldots \\
 \ldots & \ldots & \ldots & \ldots & \ldots
\end{bmatrix}\\
&=\begin{bmatrix}
	T_i^*T_i+D_TU_iP_iD_T&0&0&0&\ldots \\
	U_iP_i^{\perp}D_T&U_iP_i&0&0&\ldots \\
	0&U_iP_i^{\perp}&U_iP_i&0&\ldots \\
	0&0&U_iP_i^{\perp}&U_iP_i&\ldots \\
	\ldots & \ldots & \ldots & \ldots & \ldots
\end{bmatrix}\\
&=\begin{bmatrix}
T_i'&0&0&0&\ldots \\
	U_iP_i^{\perp}D_T&U_iP_i&0&0&\ldots \\
	0&U_iP_i^{\perp}&U_iP_i&0&\ldots \\
	0&0&U_iP_i^{\perp}&U_iP_i&\ldots \\
	\ldots & \ldots & \ldots & \ldots & \ldots
\end{bmatrix},
 \end{align*} where the equation $ T_i^*T_i+D_TU_iP_iD_T=T_i'$ follows from $(a')$ in the $(\Rightarrow)$ part of the proof of Theorem \ref{main} from \cite{Sou:Pra}. Thus, $(2,1)$ entry of $V_i'V=VV_i' $ gives us that $D_TT_i'=U_iP_iD_T+U_iP_i^{\perp}D_TT$ for any $i=1,\dots ,n$.
Since $V_i'$ is an isometry, the $(1,1)$ entry of $V_i'^*V_i'=I$ gives $D_TP_i^{\perp}D_T=I-T_i'^*T_i'=D_{T_i'}^2$. 
	 
\end{proof}

We now show that the existence of an isometric dilation for a tuple of commuting contractions $(T_1, \dots , T_n)$ on the minimal isometric dilation space of their product $T=\prod_{i=1}^nT_i$ is equivalent to the existence of a minimal isometric dilation for $(T_1^*, \dots, T_n^*)$ on the minimal isometric dilation space of $T^*$. This is one of the main results of this article.

\begin{thm} \label{mainlemma}
Let $T_1,\ldots ,T_n\in \mathcal{B}(\mathcal{H})$ be commuting contractions and let $\mathcal K$ be the minimal isometric dilation space for their product $T=\Pi_{i=1}^nT_i$. Then $(T_1, \dots , T_n)$ possesses an isometric dilation $(V_1,\ldots ,V_n)$ on $\mathcal K$ with $V=\prod_{i=1}^{n}V_i$ being the minimal isometric dilation of $T$ if and only if $(T_1^*, \dots , T_n^*)$ possesses an isometric dilation $(Y_1,\ldots ,Y_n)$ on $\mathcal K_*$ with $Y=\prod_{i=1}^{n}Y_i$ being the minimal isometric dilation of $T^*$, where $\mathcal K_*$ is the minimal isometric dilation space for $T^*$.

\end{thm}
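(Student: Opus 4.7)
The plan is to reduce the biconditional to a purely algebraic equivalence on the defect spaces $\mathcal D_T$ and $\mathcal D_{T^*}$, and then exploit a symmetry argument.

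By Theorem \ref{main} applied to $(T_1,\ldots,T_n)$ with product $T$, the hypothesis is equivalent to the existence of unique projections $P_1,\ldots,P_n$ and commuting unitaries $U_1,\ldots,U_n$ on $\mathcal D_T$ with $\prod_i U_i = I_{\mathcal D_T}$ satisfying the five conditions (1)-(5). Similarly, applying Theorem \ref{main} to $(T_1^*,\ldots,T_n^*)$ with product $T^*$, the conclusion is equivalent to the existence of unique projections $Q_1,\ldots,Q_n$ and commuting unitaries $W_1,\ldots,W_n$ on $\mathcal D_{T^*}$ with $\prod_i W_i = I_{\mathcal D_{T^*}}$ satisfying the analogous conditions (with $T$ replaced by $T^*$, $T_i$ by $T_i^*$, and $(P_i,U_i)$ by $(Q_i,W_i)$). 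Hence the theorem reduces to the equivalence of these two algebraic statements. Since this equivalence is manifestly symmetric under $T \leftrightarrow T^*$ (as $T_i^{**}=T_i$), it suffices to prove one direction.

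For the forward direction, I first apply Lemma \ref{Fundamental equations} to $(T_1^*,\ldots,T_n^*)$ to obtain unique operators $G_i, G_i' \in \mathcal B(\mathcal D_{T^*})$ such that
\[
D_{T^*}T_i^* = G_i D_{T^*} + G_i'^* D_{T^*} T^*, \qquad D_{T^*}T_i'^* = G_i' D_{T^*} + G_i^* D_{T^*} T^*.
\]
Mimicking the identifications $F_i = P_i^\perp U_i^*$ and $F_i' = U_i P_i$ on the $T$-side, which are forced by the uniqueness parts of Theorem \ref{main} and Lemma \ref{Fundamental equations}, I define on $\mathcal D_{T^*}$ the candidate data
\[
W_i := G_i^* + G_i', \qquad Q_i := G_i'^* W_i, \qquad Q_i^\perp := G_i W_i.
\]
With these assignments, condition (1) of Theorem \ref{main} for $(T_1^*,\ldots,T_n^*)$ on $\mathcal D_{T^*}$ is built in by construction.

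The principal obstacle is to verify that each $W_i$ is a unitary, each $Q_i$ is an orthogonal projection, the $W_i$'s pairwise commute with $\prod_i W_i = I_{\mathcal D_{T^*}}$, and the remaining conditions (2)-(5) transported to $\mathcal D_{T^*}$ hold. The strategy is to translate each of these identities back, via the intertwining $TD_T = D_{T^*}T$ (and its adjoint), to operator identities on $\mathcal H$ or on $\mathcal D_T$ that can be checked from the hypothesis — namely the five conditions on $(P_i,U_i)$ together with the two dual identities $D_TT_i' = U_iP_iD_T + U_iP_i^\perp D_TT$ and $D_T P_i^\perp D_T = D_{T_i'}^2$ from Lemma \ref{lem:main-01}. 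The uniqueness parts of Lemma \ref{Fundamental equations} and Theorem \ref{main} then ensure that the constructed $(Q_i,W_i)$ are the data demanded by the converse direction of Theorem \ref{main} for $(T_1^*,\ldots,T_n^*)$, giving the desired isometric dilation of $(T_1^*,\ldots,T_n^*)$ on $\mathcal K_*$ with product the minimal isometric dilation of $T^*$. The converse direction of the biconditional then follows by applying the forward direction to $(T_1^*,\ldots,T_n^*)$ and using $T_i^{**}=T_i$.
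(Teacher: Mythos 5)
Your reduction to Theorem \ref{main} and the symmetry argument for the converse direction match the paper exactly, and your candidate data $W_i=G_i^*+G_i'$ built from the Douglas--Muhly--Pearcy solutions $G_i,G_i'$ of $D_{T^*}G_iD_{T^*}=D_{T_i'^*}^2T_i^*$ and $D_{T^*}G_i'D_{T^*}=D_{T_i^*}^2T_i'^*$ is precisely the construction the paper uses (the paper writes $\widetilde U_i=G_i^*+G_i'$ and $Q_i=\tfrac12(I-\widetilde U_i'^*\widetilde U_i)$, which agrees with your $Q_i=G_i'^*W_i$ once $G_iG_i'=G_i'G_i=0$ and $G_iG_i^*+G_i'^*G_i'=I$ are known). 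The single-index facts --- that each $W_i$ is unitary, each $Q_i$ is a projection, and conditions $(1')$ and $(4')$ hold --- do indeed follow by translating identities through $TD_T=D_{T^*}T$ as you propose, and the paper carries out exactly those computations.

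The gap is in your treatment of the cross-index conditions: the commutators $[G_i,G_j]=[G_i',G_j']=0$, the identity $\prod_i W_i=I_{\mathcal D_{T^*}}$, and above all condition $(5')$, the statement that $Q_1+W_1^*Q_2W_1+\dots$ sums to $I_{\mathcal D_{T^*}}$. Your strategy of ``translating each identity back to $\mathcal D_T$ via the intertwining'' does not reach these. Condition $(5')$ is a completeness statement about a family of mutually orthogonal projections on all of $\mathcal D_{T^*}$; it is not a consequence of conditions $(1)$--$(4)$ on the $T$-side plus intertwining (the paper's Theorems \ref{coro-main} and \ref{coromain} exist precisely because $(1)$--$(4)$ do not imply $(5)$), so condition $(5)$ on $\mathcal D_T$ must enter in some structural way, and the defect intertwinings alone do not transport a resolution of the identity on $\mathcal D_T$ to one on $\mathcal D_{T^*}$. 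The paper's actual route is different and essential: it takes the Wold decomposition $\mathcal K=\mathcal K_1\oplus\mathcal K_2$ of the minimal isometric dilation $V$, shows each $V_i$ is reduced by it, applies the Berger--Coburn--Lebow model to $(V_{11},\dots,V_{n1})$ on the pure part to get unitaries and projections on $\mathcal D_{V^*}$ satisfying the BCL relations including the analogue of $(5')$, and then uses the uniqueness statement of Lemma \ref{fundexist} together with the unitary $\mathcal D_{T^*}\cong\mathcal D_{V^*}$ to identify your abstract $G_i, G_i'$ with the BCL data; the commutators, $\prod_i W_i=I$, and $(5')$ are then read off from the BCL structure. Without this (or some substitute for it), your outline does not close.
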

\begin{proof}

It suffices to prove one side of the theorem, because, the other side follows by an analogous argument. So, we assume that $(T_1, \dots , T_n)$ possesses an isometric dilation $(V_1, \dots , V_n)$ on $\mathcal K$ with $V=\prod_{i=1}^{n}V_i$ being the minimal isometric dilation of $T$. Then by Theorem \ref{main}, there are unique projections $P_1,\ldots ,P_n$ and unique commuting unitaries $U_1,\ldots ,U_n$ in $\mathcal{B}(\mathcal{D}_T)$ with $\prod_{i=1}^nU_i=I$ such that the following conditions are satisfied for $1\leq i \leq n$:   
	\begin{enumerate} 
	\item $D_TT_i=P_i^{\perp}U_i^*D_T+P_iU_i^*D_TT$ ,
    \item  $P_i^{\perp}U_i^*P_j^{\perp}U_j^*=P_j^{\perp}U_j^*P_i^{\perp}U_i^*$ ,
    \item $U_iP_iU_jP_j=U_jP_jU_iP_i$ ,
    \item $D_TU_iP_iU_i^*D_T=D_{T_i}^2$ ,
    \item  $P_1+U_1^*P_2U_1+U_1^*U_2^*P_3U_2U_1+\ldots +U_1^*U_2^*\ldots U_{n-1}^*P_nU_{n-1}\ldots U_2U_1 =I_{\mathcal{D}_{T}}$.
	\end{enumerate}
	To prove that $(T_1^*, \dots , T_n^*)$ possesses an isometric dilation $(Y_1, \dots , Y_n)$ on $\mathcal K_*$ with $Y=\prod_{i=1}^n Y_i$ being the minimal isometric dilation of $T^*$. By Theorem \ref{main} it suffices if we show the existence of unique projections $Q_1,\ldots ,Q_n$ and unique commuting unitaries $\widetilde{U}_1,\ldots ,\widetilde{U}_n$ in $\mathcal{B}(\mathcal{D}_{T^*})$ with $\prod_{i=1}^n\widetilde{U}_i=I$ satisfying    
	\begin{enumerate} 
		\item[$(1')$] $D_{T^*}T_i^*=Q_i^{\perp}\widetilde{U}_i^*D_{T^*}+Q_i\widetilde{U}_i^*D_{T^*}T^*$ ,
		\item[$(2')$] $Q_i^{\perp}\widetilde{U}_i^*Q_j^{\perp}\widetilde{U}_j^*=Q_j^{\perp}\widetilde{U}_j^*Q_i^{\perp}\widetilde{U}_i^*$ ,
		\item[$(3')$] $\widetilde{U}_iQ_i\widetilde{U}_jQ_j=\widetilde{U}_jQ_j\widetilde{U}_iQ_i$ ,
		\item[$(4')$] $D_{T^*}\widetilde{U}_iQ_i\widetilde{U}_i^*D_{T^*}=D_{T_i^*}^2$ ,
		\item[$(5')$] $Q_1+\widetilde{U}_1^*Q_2\widetilde{U}_1+\ldots +\widetilde{U}_1^*\ldots \widetilde{U}_{n-1}^*Q_n\widetilde{U}_{n-1}\ldots \widetilde{U}_1=I_{\mathcal{D}_{T^*}}$ ,
	\end{enumerate}
	for $1\leq i \leq n$. Let $T_i'=\prod_{j \neq i} T_j$ for $i=1, \dots , n$. As in the proof of Theorem \ref{main}, if we denote $F_i=P_i^{\perp}U_i^*$ and $F_i'=U_iP_i$, then we have
$F_i^*F_i+F_i'F_i'^*=I_{\mathcal{D}_T}=F_iF_i^*+F_i'^*F_i'$ and $F_iF_i'=F_i'F_i=0$. Also, condition-(4) leads to
$
D_TF_i'F_i'^*D_T=D_{T_i}^2$ for $1\leq i \leq n$ and condition-(1) gives
\begin{equation} \label{eqn:0f4}
D_TT_i=F_iD_T+F_i'^*D_TT.
\end{equation}
Lemma \ref{fundexist} guarantees the existence of operators $G_1,\ldots ,G_n,G_1',\ldots ,G_n'\in \mathcal{B}(\mathcal{D}_{T^*})$ satisfying
\begin{equation} \label{eqn:0f5}
D_{T^*}G_iD_{T^*}=D_{T_i'^*}^2T_i^*, \; \& \;\; D_{T^*}G_i'D_{T^*}=D_{T_i^*}^2T_i'^*
\end{equation}
and Lemma \ref{Fundamental equations} further shows that they satisfy
\begin{equation} \label{eqn:0f2} 
	D_{T^*}T_i^*=G_iD_{T^*}+G_i'^*D_{T^*}T^*.
\end{equation}
Following the converse part of the proof of Theorem \ref{main} we have from $(a)$ and $(a')$ the identities $D_{T_i'}^2T_i=T_i-T_i'^*T=D_TF_iD_T$ and $D_{T_i}^2T_i'=T_i'-T_i^*T=D_TF_i'D_T$ respectively. Again, Lemma \ref{fundexist} guarantees the uniqueness of such $F_i$ and $F_i'$. Thus, we have that $F_i=P_i^{\perp}U_i^*$ and $F_i'=U_iP_i$ satisfy
\begin{equation}\label{defFiFi'}
D_TF_iD_T=D_{T_i'}^2T_i \quad \& \quad D_TF_i'D_T = D_{T_i}^2T_i'.
\end{equation}
From Lemma \ref{Fundamental equations} we have that $F_i$, $F_i'$ satisfy
\begin{equation}\label{eqn:0f7}
D_TT_i'=F_i'D_T+F_i^*D_TT.
\end{equation}
Also, from Lemma \ref{lem:main-01} we have that
\begin{equation} \label{eqn:new-01}
D_TF_iF_i^*D_T=D_{T_i'}^2.
\end{equation}
Set $\widetilde{U}_i=G_i^*+G_i'$, $\widetilde{U}_i'=G_i^*-G_i'$ and $Q_i=\dfrac{1}{2}(I-\widetilde{U}_i'^*\widetilde{U}_i)$. We prove that $\widetilde{U}_i$ is a unitary and $Q_i$ is a projection for each $1\leq i \leq n$. Then we have 
\begin{equation} \label{eqn:new-01}
G_iG_i^*+G_i'^*G_i'=I_{\mathcal{D}_{T^*}}=G_i^*G_i+G_i'G_i'^* \quad \& \quad G_iG_i'=G_i'G_i=0.
\end{equation}
See the Appendix for a proof of (\ref{eqn:new-01}). Now
\[
\widetilde{U}_i^*\widetilde{U}_i =(G_i+G_i'^*)(G_i^*+G_i')=G_iG_i^*+G_iG_i'+G_i'^*G_i^*+G_i'^*G_i'=G_iG_i^*+G_i'^*G_i'=I
\] 
and 
\[
	\widetilde{U}_i\widetilde{U}_i^*=(G_i^*+G_i')(G_i+G_i'^*)=G_i^*G_i+G_i^*G_i'^*+G_i'G_i+G_i'G_i'^*=G_i^*G_i+G_i'G_i'^*=I.
\]
Hence $\widetilde{U}_i$ is a unitary for each $i.$
Observe that $G_i=(\widetilde{U}_i^*+\widetilde{U}_i'^*)/2$ and $G_i'=(\widetilde{U}_i-\widetilde{U}_i')/2$. Hence $G_i'G_i=0$ implies that $\widetilde{U}_i\widetilde{U}_i'^*=\widetilde{U}_i'\widetilde{U}_i^*$ and $G_iG_i'=0$ implies that $\widetilde{U}_i^*\widetilde{U}_i'=\widetilde{U}_i'^*\widetilde{U}_i$. Thus $Q_i=\dfrac{1}{2}(I-\widetilde{U}_i^*\widetilde{U}_i')$.
Also, we have that
\[
	Q_i^2=\dfrac{1}{2}(I-\widetilde{U}_i'^*\widetilde{U}_i)\dfrac{1}{2}(I-\widetilde{U}_i^*\widetilde{U}_i')=\dfrac{1}{4}(I-2\widetilde{U}_i'^*\widetilde{U}_i+I)=Q_i \quad \& \quad Q_i^*=\dfrac{1}{2}(I-\widetilde{U}_i^*\widetilde{U}_i')=Q_i.
\]
Therefore, each $Q_i$ is a projection. It follows from here that $G_i'=\widetilde{U}_iQ_i, G_i=Q_i^{\perp}\widetilde{U}_i^*$ for each $i$. Substituting the values of $G_i, G_i'$ in (\ref{eqn:0f2}) and (\ref{eqn:0f3}) (see the Appendix) we obtain our desired identities $(1')$ and $(4')$ respectively. In order to have $(2'), (3')$ it suffices if we prove $[G_i,G_j]=[G_i',G_j']=0$ for all $1\leq i,j \leq n$. We have from Theorem \ref{main} that $(V_1, \dots , V_n)$ is an isometric dilation of $(T_1, \dots , T_n)$. 
Note that
\[
	D_{T^*}(G_i^*T-TF_i)D_{T}=D_{T^*}G_i^*D_{T^*}T-TD_{T}F_iD_T	=(T_i-TT_i'^*)T-T(T_i-T_i'^*T)=0.
\]
Since $TD_T=D_{T^*}T$ i.e. $T$ maps $\mathcal D_T$ into $\mathcal D_{T^*}$, we have that
\begin{equation}\label{relGi-Fi}
	G_i^*T|_{\mathcal{D}_T}=TF_i|_{\mathcal{D}_T}.
\end{equation}
Using $\eqref{defFiFi'}, \eqref{eqn:0f5} \text{ and the fact that } T^*D_{T^*}=D_TT^*$ we have that
 \[
 D_TF_i'T^*D_{T^*}+D_{T^*}G_iD_{T^*}=D_{T_i}^2T_i'T^*+D_{T_i'^*}^2T_i^*=T_i'T^*-T_i^*T_iT_i'T^*+T_i^*-T_i'T_i'^*T_i^* =T_i^*D_{T^*}^2.
 \]
Therefore, for all $i=1,\ldots ,n$ we have
\begin{equation}\label{DT*Gi}
	D_{T^*}G_i=T_i^*D_{T^*}-D_TF_i'T^*.
\end{equation}
Again, by the commutativity of $V_1, \dots , V_n, V_1', \dots, V_n'$, the operators $F_i, F_j, F_i', F_j'$ satisfy
\begin{equation}\label{Ficomrel}
	[F_i,F_j]=[F_i',F_j']=0 \text{ and } [F_i^*,F_j']=[F_j^*,F_i'].
\end{equation}
	Now we prove that $[G_i,G_j]=[G_i',G_j']=0$ and $[G_i,G_j'^*]=[G_j,G_i'^*]$ for all $1\leq i<j \leq n$. First note that $F_i's$ satisfy these commutator relations. The space $\mathcal{K}$ can be decomposed into $\mathcal{K}_1\oplus \mathcal{K}_2$ such that $V|_{\mathcal{K}_1}$ is a pure isometry and $V|_{\mathcal{K}_2}$ is a unitary. We show that $\mathcal{K}_1$, $\mathcal{K}_2$ are reducing subspaces for each $V_i$. If $V_i=\begin{bmatrix}
	A_i&B_i\\C_i&D_i
\end{bmatrix}$ and $V=\begin{bmatrix}
	V_{K1}&0\\
	0&V_{K2}
\end{bmatrix}$ with respect to the decomposition $\mathcal{K}=\mathcal{K}_1\oplus \mathcal{K}_2$, then $V_iV=VV_i$ implies that $B_iV_{K2}=V_{K1}B_i$ and $C_iV_{K1}=V_{K2}C_i$. Therefore, for all $k\in\mathbb{N}$, $V_{K2}^{*k}B_i^*=B_i^*V_{K1}^{*k}$ and $V_{K1}^{*n}C_i^*=C_i^*V_{K2}^{*k}$. Now $V_{K1}$ is a pure isometry and thus for each $h\in \mathcal{H}$, $\|B_i^*V_{K1}^{*k}h\|\to 0$. Again $V_{K2}$ is a unitary and so we have $\|V_{K2}^{*k}B_i^*h\|=\|B_i^*h \|$. Therefore, $V_{K2}^{*k}B_i^*=B_i^*V_{K1}^{*k}$ would imply that $B_i=0 $. Similarly $C_i=0$ for each $i=1,\ldots n$. Hence let $V_i=V_{i1}\oplus V_{i2}$ for all $1\leq i \leq n$. Now $V_{K1}=\Pi_{i=1}^n V_{i1}$. Thus, by Theorem  \ref{BCL} there exist commuting unitaries $\widetilde{U}_{11},\ldots , \widetilde{U}_{n1}$ and projections $Q_{11},\ldots ,Q_{n1}$ in $\mathcal{B}(\mathcal{D}_{V_{K1}^*})$ such that 
$
(V_{11},\ldots ,V_{n1})\equiv (M_{\widetilde{U}_{11}Q_{11}^{\perp}+z\widetilde{U}_{11}Q_{11}}, \ldots , M_{\widetilde{U}_{n1}Q_{n1}^{\perp}+z\widetilde{U}_{n1}Q_{n1}}) 
$
and that
\begin{equation}\label{pure1}
D_{V_{K1}^*}Q_{i1}^{\perp}\widetilde{U}_{i1}^*D_{V_{K1}^*} =D_{V_{i1}'^*}^2V_{i1}^*, 
\end{equation}
\begin{equation}\label{pure2}
D_{V_{K1}^*}\widetilde{U}_{i1}Q_{i1}D_{V_{K1}^*} =D_{V_{i1}^*}^2V_{i1}'^* 
\end{equation}
for each $i=1,\ldots ,n$. The fact that $(M_{\widetilde{U}_{11}Q_{11}^{\perp}+z\widetilde{U}_{11}Q_{11}}, \ldots , M_{\widetilde{U}_{n1}Q_{n1}^{\perp}+z\widetilde{U}_{n1}Q_{n1}})$ is a commuting tuple gives
 \begin{equation}\label{properties} [\widetilde{U}_{i1}Q_{i1}^{\perp},\widetilde{U}_{j1}Q_{j1}^{\perp} ]=0=[\widetilde{U}_{i1}Q_{i1},\widetilde{U}_{j1}Q_{j1}], \, [\widetilde{U}_{i1}Q_{i1}^{\perp},\widetilde{U}_{j1}Q_{j1}]=[\widetilde{U}_{j1}Q_{j1}^{\perp} ,\widetilde{U}_{i1}Q_{i1}],
 \end{equation} 
for $1 \leq i \leq n$. Again, since $\prod_{i=1}^n M_{\widetilde{U}_{i1}Q_{i1}^{\perp}+z\widetilde{U}_{i1}Q_{i1}}=M_z$, we have that 
\begin{equation}\label{properties2}
	Q_{11}+\widetilde{U}_{11}^*Q_{21}\widetilde{U}_{11}+\widetilde{U}_{11}^*\widetilde{U}_{21}^*Q_{31}\widetilde{U}_{21}\widetilde{U}_{11}+\ldots +\widetilde{U}_{11}^*\widetilde{U}_{21}^*\ldots \widetilde{U}_{(n-1)1}^*Q_{n1}\widetilde{U}_{(n-1)1}\ldots \widetilde{U}_{21}\widetilde{U}_{11} =I_{\mathcal{D}_{V_{K1}^*}}
\end{equation} and that $\prod_{i=1}^n\widetilde{U}_{i1}=I_{\mathcal{D}_{V_{{K}_1}^*}}$.
Also, $V_{i2},\ldots , V_{n2}$ are unitary on $\mathcal{K}_2$. It follows that
\[
D_{V_{i}'^*}^2=I_{\mathcal{K}}-\begin{bmatrix}
	V_{i1}'&0\\
	0&V_{i2}'
\end{bmatrix}\begin{bmatrix}
	V_{i1}'^*&0\\
	0&V_{i2}'^*
\end{bmatrix}=\begin{bmatrix}
	I_{\mathcal{K}_1}-V_{i1}'V_{i1}'^*&0\\
	0&I_{\mathcal{K}_2}-V_{i2}'V_{i2}'^*
\end{bmatrix}=\begin{bmatrix}
	I_{\mathcal{K}_1}-V_{i1}'V_{i1}'^*&0\\
	0&0
\end{bmatrix}.
\]
Therefore, $D_{V_{i}'^*}=D_{V_{i1}'^*}\oplus 0$. Similarly we can prove that $D_{V_{i}^*}= D_{V_{i1}^*}\oplus 0$ for all $i=1,\ldots ,n$, with respect to the above decomposition of $\mathcal{K}$. So $D_{V^*}= D_{V_{K1}^*}\oplus0$.  Substituting this we have from \eqref{pure1} and \eqref{pure2}
\[
D_{V_i'^*}^2V_i^*=( D_{V_{i1}'^*}^2\oplus 0)(V_{i1}^*\oplus V_{i2}^*) =D_{V_{i1}'^*}^2V_{i1}^*\oplus 0 = D_{V_{K1}^*}Q_{i1}^{\perp}\widetilde{U}_{i1}^*D_{V_{K1}^*}\oplus 0 =D_{V^*}( Q_{i1}^{\perp}\widetilde{U}_{i1}^*\oplus 0)D_{V^*}.
\]
Hence, for $1\leq i \leq n$ we have  \begin{equation}\label{foruniqueness1}
	D_{V_i'^*}^2V_i^*=D_{V^*}( Q_{i1}^{\perp}\widetilde{U}_{i1}^*\oplus 0)D_{V^*}
\end{equation}
and similarly we have
\begin{equation}\label{foruniqueness2} 
	D_{V_i^*}^2V_i'^*=D_{V^*}( \widetilde{U}_{i1}Q_{i1}\oplus 0)D_{V^*}.
\end{equation}

Note that if $(V,\mathcal{K})$ is the minimal isometric dilation of a contraction $(T,\mathcal{H})$ then the dimensions of $\mathcal{D}_{V^*}$ and $\mathcal{D}_{T^*}$ are equal. Indeed, if $X:\mathcal{D}_{T^*}\to \mathcal{D}_{V^*}$ is defined as $XD_{T^*}h=D_{V^*}h$ for all $h\in \mathcal{H}$ and is extended continuously to the closure, then $X$ is a unitary (see \cite{Nagy}). We briefly recall the proof here. Since $V$ is the minimal isometric dilation of $T$, we have
\[
\mathcal{K}=\overline{span}\{V^kh:k\geq 0,\,h\in \mathcal{H} \}.
\]
Now, for $n\in \mathbb{N}$ and $h\in \mathcal{H}$ we have $D_{V^*}^2 V^nh=(I-VV^*)V^nh=0$. Therefore, we have $D_{V^*}V^nh=0$ for any $n\in \mathbb{N}$ and $h\in \mathcal{H}$. Thus, $\mathcal{D}_{V^*}=\overline{D_{V^*}\mathcal{K}}=\overline{D_{V^*}\mathcal{H}}.$ In fact
\[
\|D_{V^*}h\|^2 =\lbrace(I-VV^*)h,h \rbrace=\|h\|^2-\|V^*h\|^2=\|h\|^2-\|T^*h\|^2=\|D_{T^*}h\|^2.
\]
Therefore, $X$ as defined above is a unitary.
We have that
\begin{equation}\label{claim1}
	D_{V^*}XG_iX^*D_{V^*}=D_{V_i'^*}^2V_i^* \, \text{ and } \, D_{V^*}XG_i'X^*D_{V^*}=D_{V_i^*}^2V_i'^*.
\end{equation}
See the Appendix for a proof of (\ref{claim1}).
Thus, by the uniqueness argument as in Lemma \ref{fundexist}, we have from \eqref{foruniqueness1}, \eqref{foruniqueness2} and \eqref{claim1} that
$ XG_iX^*=  Q_{i1}^{\perp}\widetilde{U}_{i1}^*\oplus 0$ and $ XG_i'X^*=\widetilde{U}_{i1}Q_{i1}\oplus 0$ . This is same as saying that $ XG_iX^*=  Q_{iK}^{\perp}\widetilde{U}_{iK}^*$ and $XG_i'X^*=\widetilde{U}_{iK}Q_{iK} $, where $\widetilde{U}_{iK}=\begin{bmatrix}
	\widetilde{U}_{i1}&0\\0&I_{\mathcal{K}_2}
 \end{bmatrix}$ and $Q_{iK}=Q_{i1}\oplus 0 $ with respect to the decomposition $\mathcal{D}_{V_{K1}^*}\oplus 0 $ of $\mathcal{D}_{V^*}$. Also, from \eqref{properties} it is clear that \[ [\widetilde{U}_{iK}Q_{iK}^{\perp},\widetilde{U}_{jK}Q_{jK}^{\perp} ]=0=[\widetilde{U}_{iK}Q_{iK},\widetilde{U}_{jK}Q_{jK}], \, [\widetilde{U}_{iK}Q_{iK}^{\perp},\widetilde{U}_{jK}Q_{jK}]=[\widetilde{U}_{jK}Q_{jK}^{\perp} ,\widetilde{U}_{iK}Q_{iK}].
\] 
Thus, we have
$
[G_i^*,G_j']=[G_j^*,G_i'],\; \;[G_i,G_i]=0=[G_i',G_j'].
$
So, we have $\widetilde{U}_i\widetilde{U}_j = \widetilde{U}_j \widetilde{U}_i$. From (\ref{properties2}) and the fact that $\prod_{i=1}^n\widetilde{U}_{i1}=I_{\mathcal{D}_{V_{{K}_1}^*}}$, it follow that $\prod_{i=1}^n \widetilde{U}_i=I_{\mathcal D_{T^*}}$. Since $G_i'=\widetilde{U}_iQ_i, G_i=Q_i^{\perp}\widetilde{U}_i^*$, \eqref{properties2} guarantees that condition-$(5)$ holds. The uniqueness of $Q_i$ and $\widetilde{U}_i$ for each $i$ follows from the uniqueness of $G_i$ and $G_i'$. The proof is now complete.
  
\end{proof}

Now we are in a position to present the main unitary dilation theorem which is another main result of this paper.

	\begin{thm} \label{Unimain}
			
	Let $T_1,\ldots,T_n\in \mathcal{B}(\mathcal{H})$ be commuting contractions, $T=\Pi_{j=1}^nT_j$ and $T_i'=\Pi_{i\neq j} T_j$ for $1\leq i \leq n$. 
	\begin{itemize}
		\item[(a)] If $\widetilde{\mathcal{K}}_0$ is a minimal unitary dilation space for $T$, then $(T_1,\ldots ,T_n)$ possesses a unitary dilation $(W_1,\ldots,W_{n})$ on $\widetilde{\mathcal{K}}_0$ with $W=\prod_{i=1}^{n}W_i$ being the minimal unitary dilation of $T$ if and only if there exist unique projections $P_1,\ldots ,P_n$ and unique commuting unitaries $U_1,\ldots ,U_n$ in $\mathcal{B}(\mathcal{D}_T)$ with $\prod_{i=1}^n U_i=I$ such that the following hold for $i=1, \dots, n:$
		\begin{enumerate} 
			\item $D_TT_i=P_i^{\perp}U_i^*D_T+P_iU_i^*D_TT$ ,
			\item  $P_i^{\perp}U_i^*P_j^{\perp}U_j^*=P_j^{\perp}U_j^*P_i^{\perp}U_i^*$ ,
			\item $U_iP_iU_jP_j=U_jP_jU_iP_i$ ,
			\item $D_TU_iP_iU_i^*D_T=D_{T_i}^2$ ,
			\item  $P_1+U_1^*P_2U_1+U_1^*U_2^*P_3U_2U_1+\ldots +U_1^*U_2^*\ldots U_{n-1}^*P_nU_{n-1}\ldots U_2U_1 =I_{\mathcal{D}_{T}}$.
			
		\end{enumerate} 
				
		\item[(b)] Such a unitary dilation is minimal and unique in the sense that if $(X_1, \dots , X_n)$ on $\widetilde{\mathcal K}_1$ is another unitary dilation of $(T_1, \dots , T_n)$ such that the product $X=\prod_{i=1}^nX_i$ is a minimal unitary dilation of $T$, then there is a unitary $\rho:\widetilde{\mathcal K}_0 \rightarrow \widetilde{\mathcal K}_1$ such that $(X_1, \dots , X_n)=(\rho* W_1 \rho, \dots , \rho* W_n \rho)$.
		
	\end{itemize}
	
\end{thm}

\begin{proof}
\textbf{Part-(a). (The }$\Leftarrow$ \textbf{part).}	Suppose there are unique projections $P_1,\ldots ,P_n$ and unique commuting unitaries $U_1,\ldots ,U_n$ in $\mathcal{B}(\mathcal{D}_T)$ with $\prod_{i=1}^nU_i=I$ satisfying the operator identities $(1)-(5)$ for all $1\leq i \leq n$. We explicitly construct a unitary dilation $(W_1, \dots , W_n)$ on $\widetilde{\mathcal K}_0$ of $(T_1, \dots , T_n)$ such that the product $W= \prod_{i=1}^n W_i$ becomes a minimal unitary dilation of the product of the contractions $T=\prod_{i=1}^n$. Evidently, by Theorem \ref{main}, $(T_1, \dots , T_n)$ possesses an isometric dilation on the minimal isometric dilation space $\mathcal K_0$ of $T$, especially when $\mathcal K_0=\HS \oplus l^2(\mathcal D_T)$, the Sch$\ddot{a}$ffer's minimal isometric dilation space of $T$, $(T_1, \dots , T_n)$ dilates to a tuple of commuting isometries $(V_1, \dots , V_n)$ on $\mathcal K_0$, where
	\[
		  V_i=\begin{bmatrix}
		  	T_i&0&0&0&\ldots \\
		  	P_iU_i^*D_T&P_i^{\perp}U_i^*&0&0&\ldots \\
		  	0&P_iU_i^*&P_i^{\perp}U_i^*&0&\ldots \\
		  	0&0&P_iU_i^*&P_i^{\perp}U_i^*&\ldots \\
		  	\ldots & \ldots & \ldots & \ldots & \ldots
		  \end{bmatrix}, \qquad 1\leq i \leq n, 
		  \]
		  and $\prod_{i=1}^nV_i=V$ is the Sch$\ddot{a}$ffer's minimal isometric dilation of $T$. 
	Again, since $(T_1, \dots , T_n)$ possesses an isometric dilation $(V_1, \dots , V_n)$ on $\mathcal K_0$ with  $\prod_{i=1}^nV_i=V$ being the minimal isometric dilation of $T$, it follows from Theorem \ref{mainlemma} that $(T_1^*, \dots , T_n^*)$ possesses an isometric dilation $(Y_1,\dots ,Y_n)$ on the minimal isometric dilation space $\mathcal K_*$ of $T^*$ with $\prod_{i=1}^nY_i=Y$ being the minimal isometric dilation of $T^*$. Therefore, Theorem \ref{main} guarantees the existence of a set of unique orthogonal projections $Q_1,\ldots ,Q_n$ and unique commuting unitaries $\widetilde{U}_1,\ldots ,\widetilde{U}_n$ in $\mathcal{B}(\mathcal{D}_{T^*})$ with $\prod_{i=1}^n\widetilde{U}_i=I$ such that the following identities hold for all $i=1,\dots ,n$:  
	\begin{enumerate} 
		\item[$(1')$] $D_{T^*}T_i^*=Q_i^{\perp}\widetilde{U}_i^*D_{T^*}+Q_i\widetilde{U}_i^*D_{T^*}T^*$,
		\item[$(2')$] $Q_i^{\perp}\widetilde{U}_i^*Q_j^{\perp}\widetilde{U}_j^*=Q_j^{\perp}\widetilde{U}_j^*Q_i^{\perp}\widetilde{U}_i^*$ ,
		\item[$(3')$] $\widetilde{U}_iQ_i\widetilde{U}_jQ_j=\widetilde{U}_jQ_j\widetilde{U}_iQ_i$,
		\item[$(4')$] $D_{T^*}\widetilde{U}_iQ_i\widetilde{U}_i^*D_{T^*}=D_{T_i^*}^2$,
		\item[$(5')$] $Q_1+\widetilde{U}_1^*Q_2\widetilde{U}_1+\ldots +\widetilde{U}_1^*\ldots \widetilde{U}_{n-1}^*Q_n\widetilde{U}_{n-1}\ldots \widetilde{U}_1=I_{\mathcal{D}_{T^*}}$.
		
	\end{enumerate}
	Since $(V_1, \dots , V_n)$ is an isometric dilation of $(T_1,\dots ,T_n)$ on the minimal isometric dilation space of $T$ with $\prod_{i=1}^nV_i=V$ being the minimal isometric dilation of $T$, the $(\Rightarrow)$ part of the Theorem \ref{main} is applicable. From conditions $(a)$ and $(a')$ in the proof of $(\Rightarrow)$ part of the Theorem \ref{main}, we have the identities $D_{T_i'}^2T_i=T_i-T_i'^*T=D_TF_iD_T$ and $D_{T_i}^2T_i'=T_i'-T_i^*T=D_TF_i'D_T$ respectively, where $F_i=P_i^{\perp}U_i^*$ and $F_i'=U_iP_i$. Thus, for $1 \leq i \leq n$ we have
	\begin{equation}\label{DTUIPiDT}
	D_TU_iP_iD_T=T_i'-T_i^*T=D_{T_i}^2T_i' \quad \& \quad D_TP_i^{\perp}U_i^*D_T=T_i-T_i'^*T=D_{T_i'}^2T_i \,.
\end{equation}
An analogous argument holds for $(T_1^*, \dots , T_n^*)$ and thus we have
\begin{equation}\label{DT*UitildeQiDT*}
D_{T^*}\widetilde{U}_iQ_iD_{T^*}=T_i'^*-T_iT^*=D_{T_i^*}^2T_i'^* \quad \& \quad D_{T^*}Q_i^{\perp}\widetilde{U}_iD_{T^*}=T_i^*-T_i'T^*=D_{T_i'^*}^2T_i^*.
\end{equation} 
It is well-known from Sz.-Nagy-Foias theory (see \cite{Nagy}) that any two minimal unitary dilations of a contraction are unitarily equivalent. Thus, without loss of generality we consider the Sch$\ddot{a}$ffer's minimal unitary dilation space $\widetilde{\mathcal K}_0$ of $T$, where
\[
\widetilde{\mathcal{K}}_0=l^2(\mathcal D_T)\oplus \mathcal{H}\oplus l^2(\mathcal{D}_{T^*})=\dots \oplus \mathcal D_{T}\oplus \mathcal D_T \oplus \mathcal D_T \oplus \HS \oplus \mathcal D_{T^*} \oplus \mathcal D_{T^*} \oplus \mathcal D_{T^*} \oplus \dots
\]
and construct a unitary dilation on $\widetilde{\mathcal K}_0$ for $(T_1, \dots , T_n)$. Let us define $W_1,\ldots , W_n$ on $\widetilde{\mathcal K}_0=l^2(\mathcal{D}_T)\oplus\mathcal{H}\oplus l^2(\mathcal{D}_{T^*})$ by 
	\[
	W_i=\left[ 
	\begin{array}{c|c|c} 
		\begin{array}{c c c c} 
			\ddots & \vdots & \vdots & \vdots \\
			\cdots &P_i^{\perp}U_i^*&P_iU_i^*&0\\ 
			\cdots &0 &P_i^{\perp}U_i^*&P_iU_i^*\\
			\cdots &0 &0 &P_i^{\perp}U_i^*
		\end{array}  & \begin{array}{c}
			\vdots \\
			0\\
			0\\
			P_iU_i^*D_T
		\end{array} & \begin{array}{c c c c} 
			\vdots & \vdots & \vdots &  \\
			0 &0 &0&\cdots\\ 
			0 &0 &0&\cdots\\
			-P_iU_i^*T^* &0 &0 &\cdots
		\end{array} \\ 
		\hline 
		\begin{array}{c c c c}
			\cdots\hspace{0.45cm} & 0\hspace{0.8cm} & 0\hspace{0.5cm} & 0\hspace{0.8cm}	
		\end{array} & T_i & \begin{array}{c c c c}
			D_{T^*}\widetilde{U}_iQ_i&0 &0 &\cdots 	
		\end{array}\\
		\hline 
		0 & 0 & \begin{array}{c c c c} 
			\hspace{3mm}	\widetilde{U}_iQ_i^{\perp} &\hspace{3mm} \widetilde{U}_iQ_i &0 & \cdots \\
			0 &\widetilde{U}_iQ_i^{\perp}&\widetilde{U}_iQ_i& \cdots\\ 
			0 &0 &\widetilde{U}_iQ_i^{\perp}& \cdots\\
			\vdots & \vdots & \vdots &\ddots
		\end{array} 
	\end{array} 
	\right] ,\quad 1\leq i \leq n.
	\]
We prove that $(W_1, \dots , W_n)$ is a unitary dilation of $(T_1, \dots , T_n)$. Note that the spaces $\mathcal{K}_0'=l^2(\mathcal{D}_T)\oplus \mathcal{H}$ and $\mathcal{K}_0=\mathcal{H} \oplus l^2(\mathcal{D}_T)$ are isomorphic by the canonical unitary that maps $\xi \oplus h$ to $h \oplus \xi$, where $\xi \in l^2(\mathcal D_T)$ and $h\in \HS$. Hence, this canonically gives a unitary 
\begin{equation}
\label{uni:eq1}
\phi:l^2(\mathcal{D}_T)\oplus\mathcal{H}\oplus l^2(\mathcal{D}_{T^*}) \to  \mathcal K_0 \oplus l^2(\mathcal D_{T^*}).
\end{equation} 
 Now if $\widetilde{W}_i=\phi W_i \phi^*$ on $\mathcal K_0 \oplus l^2(\mathcal D_{T^*})$ is the replica of $W_i$ for each $i$, then it suffices to show that $(\widetilde{W}_1, \dots , \widetilde{W}_n)$ is a unitary dilation of $(T_1, \dots , T_n)$. It is evident that with respect to the decomposition $\mathcal K_0 \oplus l^2(\mathcal D_{T^*})$, 
\begin{equation}
\label{uni:eq2}
\widetilde{W}_i=\phi W_i \phi^*=\begin{bmatrix}
		V_i&D_i\\
		0&E_i
	\end{bmatrix}, \qquad (1\leq i \leq n)
\end{equation}  
where $D_i:l^2(\mathcal{D}_{T^*})\to \mathcal{K}_0$ and $E_i:l^2(\mathcal{D}_{T^*})\to l^2(\mathcal{D}_{T^*}) $ are the following operators:
\[
D_i=\begin{bmatrix}
		D_{T^*}\widetilde{U}_iQ_i&0&0&\cdots\\
		-P_iU_i^*T^*&0&0&\cdots\\
		0&0&0&\cdots\\
		\vdots&\vdots&\vdots&\ddots
	\end{bmatrix} \qquad \& \qquad 
	E_i=\begin{bmatrix}
		\widetilde{U}_iQ_i^{\perp}&\widetilde{U}_iQ_i&0&\cdots\\
		0&\widetilde{U}_iQ_i^{\perp}&\widetilde{U}_iQ_i&\cdots\\
		0&0&\widetilde{U}_iQ_i^{\perp}&\cdots\\
		\vdots&\vdots&\vdots&\ddots
	\end{bmatrix}.
	\]	
	Evidently $\widetilde{W}_i|_{\mathcal K_0}=V_i$ for each $i$ and thus it suffices to show that $(\widetilde{W}_1, \dots , \widetilde{W}_n)$ is a commuting tuple of unitaries, because, then $(\widetilde{W}_1, \dots , \widetilde{W}_n)$ becomes a unitary extension of $(V_1, \dots , V_n)$ and hence a unitary dilation of $(T_1, \dots , T_n)$.
	
	\vsp
	
\noindent\textit{\textbf{Step 1.}}	First we prove that $(\widetilde{W}_1, \dots , \widetilde{W}_n)$ is a commuting tuple. For each $i,j$ we have,
\[ \widetilde{W}_i\widetilde{W}_j=\begin{bmatrix}
	V_iV_j&V_iD_j+D_iE_j\\0&E_iE_j
\end{bmatrix} \quad \text{ and } \quad \widetilde{W}_j\widetilde{W}_i=\begin{bmatrix}
V_jV_i&V_jD_i+D_jE_i\\0&E_jE_i
\end{bmatrix} .
\]
Thus, $\widetilde{W}_i\widetilde{W}_j= \widetilde{W}_j\widetilde{W}_i$ if and only if the following conditions hold:
\begin{enumerate}
\item[(i)] 	$V_iV_j=V_jV_i$,
\item[(ii)]   $V_iD_j+D_iE_j=V_jD_i+D_jE_i$,
\item[(iii)]   $ E_iE_j=E_jE_i$.
\end{enumerate}
 The condition-$(i)$ follows from Theorem \ref{main}. We prove condition-$(iii)$. First we simplify condition-$(2')$ in the statement of this theorem using condition-$(3')$ and the fact that $U_iU_j=U_jU_i$. We have
 $
 \widetilde{U}_i^*Q_j\widetilde{U}_j^*+Q_i\widetilde{U}_i^*\widetilde{U}_j^*=\widetilde{U}_j^*Q_i\widetilde{U}_i^*+Q_j\widetilde{U}_j^*\widetilde{U}_i^*.
$
Since $\widetilde{U}_i,\widetilde{U}_j$ are commuting unitaries, this further gives
\begin{equation}\label{key condition}
	\widetilde{U}_i^*Q_j\widetilde{U}_i+Q_i=\widetilde{U}_j^*Q_i\widetilde{U}_j+Q_j.
\end{equation} 
The condition $\widetilde{U}_i^*Q_j\widetilde{U}_i+Q_i=\widetilde{U}_j^*Q_i\widetilde{U}_j+Q_j\leq I$ then follows from condition-$(5')$. Note that $E_i$ on $l^2(\mathcal{D}_{T^*})$ is equivalent to $T_{\widetilde{U}_iQ_i^{\perp}+\bar{z}\widetilde{U}_iQ_i }$ on $H^2(\mathcal{D}_{T^*})$. Hence by Lemma \ref{BDF lemma alt}, $(iii)$ holds. The proof of $(ii)$, i.e.
\begin{equation} \label{eqn:new-002}
V_iD_j+D_iE_j=V_jD_i+D_jE_i
\end{equation}
is technical and is given in the Appendix. Therefore, $(\widetilde W_1,\ldots ,\widetilde W_n)$ is a commuting tuple. 
  \vsp
  
 \noindent\textit{\textbf{Step 2.}} Now we prove that $\widetilde{W}_i$ is a unitary for each $i=1,2\dots ,n$.
First we note that $\widetilde{W}_i^*\widetilde{W}_i=I$ if and only if
	\[\begin{bmatrix}
		V_i^*V_i&V_i^*D_i\\
		D_i^*V_i&D_i^*D_i+E_i^*E_i
	\end{bmatrix}=\begin{bmatrix}
	I&0\\
	0&I
\end{bmatrix}\] which holds if and only if 
\begin{itemize}
		\item[(a)]$V_i^*V_i=I_{\mathcal{K}_0}$ ,
		\item[(b)]$V_i^*D_i=0 \text{ on } l^2(\mathcal{D}_{T^*})$,
		\item[(c)]$D_i^*D_i+E_i^*E_i=I_{\l^2(\mathcal{D}_{T^*})} $.
	\end{itemize}
	Clearly, $(a)$ follows as each $V_i$ is an isometry. For showing $(b)$ note that 
\[
V_i^*D_i=\begin{bmatrix}
	T_i^*D_{T^*}\widetilde{U}_iQ_i-D_{T}U_iP_iU_i^*T^*&0&0&\ldots\\
	-U_iP_i^{\perp}P_iU_i^*T^*&0&0&\ldots\\
	0&0&0&\ldots\\
	\vdots&\vdots &\vdots &\ddots
\end{bmatrix}.
\]
Clearly, $U_iP_i^{\perp}P_iU_i^*T^*=U_i(I-P_i)P_iU_i^*T^*= 0$ as $P_i$ is a projection. Now
\[
(T_i^*D_{T^*}\widetilde{U}_iQ_i-D_{T}U_iP_iU_i^*T^*)D_{T^*} = T_i^*(T_i'^*-T_iT^*)-D_{T}U_iP_iU_i^*D_TT^*=T_i^*(T_i'^*-T_iT^*)-D_{T_i}^2T^* =0,
\]
where the first and the second equality follow from $\eqref{DT*UitildeQiDT*}$ and condition-(4) of the theorem respectively. This proves $(b)$. For proving $(c)$ we observe that $Q_iQ_i^{\perp}=0$, $\widetilde{U}_i^*\widetilde{U}_i=I$ and $Q_i+Q_i^{\perp}=I$. Further the $(1,1)$ entry of $D_i^*D_i+E_i^*E_i$ with respect to the decomposition $\mathcal{D}_{T^*}\oplus \mathcal{D}_{T^*}\oplus \cdots $ is
$
Q_i\widetilde{U_i}^*D_{T^*}^2\widetilde{U_i}Q_i+TU_iP_iU_i^*T^*+Q_i^{\perp}\widetilde{U_i}^*\widetilde{U_i}Q_i^{\perp}.
$ 
So, we have
\begin{align*}
	Q_i\widetilde{U_i}^*D_{T^*}^2\widetilde{U_i}Q_i+TU_iP_iU_i^*T^*+Q_i^{\perp}\widetilde{U_i}^*\widetilde{U_i}Q_i^{\perp}
	&=Q_i\widetilde{U_i}^*\widetilde{U_i}Q_i-Q_i\widetilde{U_i}^*TT^*\widetilde{U_i}Q_i+TU_iP_iU_i^*T^*+Q_i^{\perp} \\ & =Q_i+Q_i^{\perp}\hspace{1cm}[\text{by }\eqref{relbetUiPiQiUitilde}]\\
	&=I.
\end{align*}
 Thus, $(c)$ holds. We now prove $\widetilde{W}_i\widetilde{W}_i^*=I$. Note that $\widetilde{W}_i\widetilde{W}_i^*=I$ if and only if 
\begin{equation} \label{eqn:new-003}
\begin{bmatrix}
	V_iV_i^*+D_iD_i^*&D_iE_i^*\\
	E_iD_i^*&E_iE_i^*
\end{bmatrix}=\begin{bmatrix}
	I&0\\
	0&I
\end{bmatrix}
\end{equation}
and a proof to this is similar to that of $\widetilde{W}_i^*\widetilde{W}_i=I$ and is given in the Appendix. Thus, $\widetilde{W}_i$ is a unitary for each $i=1, \dots , n$. Hence, $(\widetilde{W}_1,\ldots ,\widetilde{W}_n)$ is a unitary dilation of $(T_1,\ldots ,T_n)$. 

\vsp

\noindent\textit{\textbf{Step  3.}} Now we prove that $\prod_{i=1}^nW_i =W$, where $W$ is the Sch$\ddot{a}$ffer's minimal unitary dilation of $T$, i.e. 
\begin{equation} \label{eqn:uni-dil}
	W =\left[
\begin{array}{ c c c c|c|c c c c}
\bm{\ddots}&\vdots &\vdots&\vdots   &\vdots  &\vdots& \vdots&\vdots&\vdots\\
\cdots&0&I&0  &0&  0&0&0&\cdots\\
\cdots&0&0&I  &0&  0&0&0&\cdots\\
\cdots&0&0&0  &D_T&  -T^*&0&0&\cdots\\ \hline

\cdots&0&0&0   &T&   D_{T^*}&0&0&\cdots\\ \hline

\cdots&0&0&0   &0&  0& I&0&\cdots\\
\cdots&0&0&0   &0&  0&0&I&\cdots\\
\cdots&0&0&0  &0&   0& 0&0&\cdots\\
\vdots&\vdots&\vdots&\vdots&\vdots&\vdots&\vdots&\vdots&\bm{\ddots}\\
\end{array} \right].
	\end{equation}
Let $\widetilde{W}= \phi W \phi^*$ on $\mathcal{K}_0 \oplus l^2(\mathcal{D}_{T^*}) $ where, $\phi$ is as in \eqref{uni:eq1}. Then, evidently
\begin{equation}
\label{uni:eq3}
\widetilde{W}=\begin{bmatrix}
	V&D\\
	0&E
\end{bmatrix}, \qquad (1\leq k \leq n)
\end{equation}  
  where, $V$ is the Sch$\ddot{a}$ffer's minimal isometric dilation of $T$, $D:l^2(\mathcal{D}_{T^*})\to \mathcal{K}_0$ and $E:l^2(\mathcal{D}_{T^*})\to l^2(\mathcal{D}_{T^*}) $ are the following operators:
\begin{equation} \label{uni:eq4}
D=\begin{bmatrix}
	D_{T^*}&0&0&\cdots\\
	-T^*&0&0&\cdots\\
	0&0&0&\cdots\\
	\vdots&\vdots&\vdots&\ddots
\end{bmatrix} \quad \& \quad 
E=\begin{bmatrix}
	0&I&0&\cdots\\
	0&0&I&\cdots\\
	0&0&0&\cdots\\
	\vdots&\vdots&\vdots&\ddots
\end{bmatrix}.
\end{equation}
It follows from \eqref{uni:eq2} that, 
\[ \prod_{i=1}^n \widetilde{W}_i= \prod_{i=1}^n \phi W_i \phi^* = \phi \left(\prod_{i=1}^nW_i \right) \phi^*. \] Therefore, $W=\prod_{i=1}^nW_i$ if and only if $ \widetilde{W}= \phi W \phi^* =\prod_{i=1}^n \widetilde{W}_i$. Hence it suffices to prove that $\widetilde{W}=\prod_{i=1}^n \widetilde{W}_i$.
It was proved by Bercovici, Douglas and Foias in \cite{Berc:Dou:Foi}, all terms that are involved in condition-$(5)$ (and in condition-$(5')$) are mutually orthogonal projections. This is because, the sum of projections is a projection if and only if they are mutually orthogonal. Suppose, for any $1\leq k \leq n$,  
\[
\underline{T_k}=T_1\ldots T_k ,\;\underline{U_k}=U_1\ldots U_k,\; \underline{P_k}=P_1+\underline{U_1}^*P_2\underline{U_1}+\ldots +\underline{U_{k-1}}^*P_k\underline{U_{k-1}},$$ 
$$\underline{\widetilde{U}_k}=\widetilde{U}_1\ldots \widetilde{U}_k,\; \underline{Q_k}=Q_1+\underline{\widetilde{U}_1}^*Q_2\underline{\widetilde{U}_1}+\ldots +\underline{\widetilde{U}_{k-1}}^*Q_k\underline{\widetilde{U}_{k-1}},
\]
where $U_k,P_k\in \mathcal{B}(\mathcal{D}_T)$ are as in the hypothesis of this theorem and $\widetilde{U}_k,Q_k\in \mathcal{B}(\mathcal{D}_{T^*})$ are as obtained in the beginning of the proof satisfying conditions $(1')-(5')$. Evidently each $\underline{U_k}$ is a unitary and each $\underline{P_k}$ is a projection. Further, it follows from the hypothesis of this theorem and from the condition $(5')$ that,
\begin{equation}\label{uni:eq5}
\underline{T}_n=T, \, \underline{U}_n=I_{\mathcal{D}_T},\, \underline{P}_n=I_{\mathcal{D}_T},\, \underline{\widetilde{U}_n}=I_{\mathcal{D}_{T^*}}, \quad \text{ and } \quad \underline{Q}_n=I_{\mathcal{D}_{T^*}}.
\end{equation}
 Define
\[
\underline{V_{k}}=\begin{bmatrix}
	\underline{T_k}&0&0&0&\ldots \\
	\underline{P_k}\;\underline{U_k}^*D_T&\underline{P_k}^{\perp}\underline{U_k}^*&0&0&\ldots \\
	0&\underline{P_k}\;\underline{U_k}^*&\underline{P_k}^{\perp}\underline{U_k}^*&0&\ldots \\
	0&0&\underline{P_k}\;\underline{U_k}^*&\underline{P_k}^{\perp}\underline{U_k}^*&\ldots \\
	\ldots & \ldots & \ldots & \ldots & \ldots
\end{bmatrix}, \qquad 1\leq k \leq n. 
\]    Therefore, we have from \eqref{uni:eq5} that $\underline{V}_n=V$, where $V$ is the Sch$\ddot{a}$ffer's minimal isometric dilation of $T$. Note that for all $k=1, \dots , n$, the operators $V_k$ and $\underline{V_{k}}$ have the following block- matrix form with respect to the decomposition $\mathcal{K}_0=\mathcal{H}\oplus l^2(\mathcal{D}_T)$:
\[ V_k=\begin{bmatrix}
	T_k&0\\
	C_k&S_k
\end{bmatrix}, \ \  \ \ 
\underline{V_{k}}=\begin{bmatrix}
	\underline{T_k}&0\\
	\underline{C_k}&\underline{S_k}
\end{bmatrix}, \] where, \[ C_k=\begin{bmatrix}
	P_kU_k^*D_T\\0\\0\\ \vdots
\end{bmatrix}:\mathcal{H}\to l^2(\mathcal{D}_T),\; S_k=\begin{bmatrix}
	P_k^{\perp}U_k^* &0&0&\cdots\\
	P_kU_k^* & P_k^{\perp}U_k^*&0&\cdots\\
	0&P_kU_k^* & P_k^{\perp}U_k^*&\cdots\\
	\vdots&\vdots&\vdots&\ddots
\end{bmatrix}:l^2(\mathcal{D}_T)\to l^2(\mathcal{D}_T)\]

\[
\underline{C_k}=\begin{bmatrix}
	\underline{P_k}\;\underline{U_k}^*D_T\\0\\0\\ \vdots
\end{bmatrix}:\mathcal{H}\to l^2(\mathcal{D}_T), \ \  \underline{S_k}=\begin{bmatrix}
	\underline{P_k}^{\perp}\underline{U_k}^*&0&0&\cdots\\
	\underline{P_k}\;\underline{U_k}^*&\underline{P_k}^{\perp}\underline{U_k}^*&0&\cdots\\
	0&\underline{P_k}\;\underline{U_k}^*&\underline{P_k}^{\perp}\underline{U_k}^*&\cdots\\
	\vdots&\vdots&\vdots&\ddots
\end{bmatrix}:l^2(\mathcal{D}_T)\to l^2(\mathcal{D}_T).\]
Let  \[
\underline{\widetilde{W}_k}=\begin{bmatrix}
	\underline{V_k}&\underline{D_k}\\
	0&\underline{E_k}
\end{bmatrix}, \qquad (1\leq k \leq n)
\]  
where $\underline{D_k}:l^2(\mathcal{D}_{T^*})\to \mathcal{K}_0$ and $\underline{E_k}:l^2(\mathcal{D}_{T^*})\to l^2(\mathcal{D}_{T^*}) $ are the following operators:
\[
\underline{D_k}=\begin{bmatrix}
	D_{T^*}\underline{\widetilde{U}_k}\;\underline{Q_k}&0&0&\cdots\\
	-\underline{P_k}\;\underline{U_k}^*T^*&0&0&\cdots\\
	0&0&0&\cdots\\
	\vdots&\vdots&\vdots&\ddots
\end{bmatrix} \quad \& \quad 
\underline{E_k}=\begin{bmatrix}
	\underline{\widetilde{U}_k}\;\underline{Q_k}^{\perp}&\underline{\widetilde{U}_k}\;\underline{Q_k}&0&\cdots\\
	0&\underline{\widetilde{U}_k}\;\underline{Q_k}^{\perp}&\underline{\widetilde{U}_k}\;\underline{Q_k}&\cdots\\
	0&0&\underline{\widetilde{U}_k}\;\underline{Q_k}^{\perp}&\cdots\\
	\vdots&\vdots&\vdots&\ddots
\end{bmatrix}.
\]	Again by \eqref{uni:eq3}, \eqref{uni:eq4} and \eqref{uni:eq5}, we have 
\begin{equation}\label{uni:eq6}
\underline{\widetilde{W}_n}=\widetilde{W}.
\end{equation}
We now show that $ \underline{\widetilde{W}_k}\widetilde{W}_{k+1}=\underline{\widetilde{W}_{k+1}}$. Considering the block matrices of these operators with respect to the decomposition $\mathcal{K}_0\oplus l^2(\mathcal{D}_{T^*}) $ it suffices if we prove that
\[
\begin{bmatrix}
	\underline{V_k}V_{k+1}&\underline{V_k}D_{k+1}+\underline{D_k}E_{k+1}\\0&\underline{E_k}E_{k+1}
\end{bmatrix}=\begin{bmatrix}
\underline{V_{k+1}}&\underline{D_{k+1}}\\0&\underline{E_{k+1}}
\end{bmatrix} , \quad 1\leq k \leq n-1.
\]
From Step $3$ in the proof of Theorem \ref{main}, we have that 
$
\underline{V_{k}}V_{k+1}=\underline{V_{k+1}}
$ for $1\leq k \leq n-1$. Again, considering the block matrices of $\underline{V_k}, $ $\underline{V_{k+1}}$ and $V_{k+1}$ with respect to decomposition $\mathcal{K}_0=\mathcal{H}\oplus l^2(\mathcal{D}_T)$ we have $\underline{S_{k+1}}=\underline{S_k}S_{k+1}$. This identity and a simple calculation lead to
 \begin{equation}\label{underlineVk} 
 	\underline{P_k}\;\underline{U_k}^*P_{k+1}U_{k+1}^*=0 \quad \& \quad \underline{P_k}^{\perp}\underline{U_k}^*P_{k+1}U_{k+1}^*+\underline{P_k}\;\underline{U_k}^*P_{k+1}^{\perp}U_{k+1}^*=\underline{P_{k+1}}\;\underline{U_{k+1}}^*.
 \end{equation}  
Again, it is clear from the definition that $\underline{\widetilde{U}_{k+1}}=\underline{\widetilde{U}_k}\widetilde{U}_{k+1}$ and  $\underline{Q_{k+1}}=\underline{Q_k}+\underline{\widetilde{U}_k}^*Q_{k+1}\underline{\widetilde{U}_k}$. Further it can inductively be proved that 
\begin{equation}\underline{Q_k}+\underline{\widetilde{U}_k}^*Q_{k+1}\underline{\widetilde{U}_k}=Q_{k+1}+\widetilde{U}_{k+1}^*\underline{Q_k}\widetilde{U}_{k+1} .
\end{equation}
Hence, it follows from Lemma \ref{BDF lemma 2} that $\underline{E_{k+1}}=\underline{E_k}E_{k+1}$. Thus, 
\begin{equation}\label{underlineEk} \underline{\widetilde{U}_k}\;\underline{Q_k}\widetilde{U}_{k+1}\;Q_{k+1}=0 
\end{equation} and \begin{equation}\label{Ek2}
\underline{\widetilde{U}_{k}}\;\underline{Q_k}^{\perp}\widetilde{U}_{k+1}\;Q_{k+1}+\underline{\widetilde{U}_{k}}\;\underline{Q_k}\;\widetilde{U}_{k+1}\;Q_{k+1}^{\perp}=\underline{\widetilde{U}_{k+1}}\;\underline{Q_{k+1}}.
\end{equation}
It remains to show that
\begin{equation} \label{eqn:new-00011}
\underline{V_k}D_{k+1}+\underline{D_k}E_{k+1}=\underline{D_{k+1}}.
\end{equation}
A proof to this is technical and is given in the Appendix.
 Hence, for each $k$ we have that $\underline{\widetilde{W}_k}\widetilde{W}_{k+1}=\underline{\widetilde{W}_{k+1}}$. Thus, recursively we have $\underline{\widetilde{W}_{n}}=\prod_{i=1}^n\widetilde{W}_{i}.$  It then follows from \eqref{uni:eq6}, that 
$
 \widetilde{W}= \underline{\widetilde{W}_{n}}=\prod_{i=1}^n\widetilde{W}_{i},
 $
 as required.
 
\vsp

\noindent\textbf{(The }$\Rightarrow$ \textbf{part).}	 Suppose $(\widehat{W}_1,\ldots ,\widehat{W}_n)$ is a unitary dilation of $(T_1,\ldots ,T_n)$ on a minimal unitary dilation space ${\mathcal{K}'}$ of $T$ with $\Pi_{i=1}^n\widehat{W}_i=\widehat{W}$ being the minimal unitary dilation of $T$. Then 
\[
{\mathcal{K}'}=\overline{span}\lbrace \widehat{W}^nh: h\in \mathcal{H}, \; n\in \mathbb{Z} \rbrace.
\]
  Let $W$ as in (\ref{eqn:uni-dil}) be the Sch$\ddot{a}$ffer's minimal unitary dilation of $T$ on the Sch$\ddot{a}$ffer's minimal space $\widetilde{\mathcal{K}}_0$. So, 
  \[ \widetilde{\mathcal{K}}_0=\overline{span}\lbrace W^nh: h\in \mathcal{H}, \; n\in \mathbb{Z} \rbrace.\]
  Therefore, the map $\tau : \widetilde{\mathcal{K}}_0\to {\mathcal{K}'}$ defined by $\tau(W^nh)=\widehat{W}^n(h)$ is a unitary which is identity on $\mathcal{H}$. Thus $\mathcal{H}$ is a reducing subspace for $\tau$ and consequently $\tau=\begin{bmatrix} I_{\mathcal{H}}&0\\
	0&\tau_2 \end{bmatrix}$, for some unitary $\tau_2$, with respect to the decomposition $\mathcal{H}\oplus (\widetilde{\mathcal{K}}_0\ominus\mathcal{H})$ of $\widetilde{\mathcal{K}}_0$ and $\mathcal{H}\oplus ({\mathcal{K}'}\ominus\mathcal{H})$ of ${\mathcal{K}'}$. Evidently, $(\tau^* \widehat{W}_1\tau,\ldots ,\tau^*\widehat{W}_n\tau )$ is a commuting tuple of unitaries dilating $(T_1,\ldots ,T_n)$ on $\mathcal{K}'$ with $\prod_{i=1}^n\tau^*\widehat{W}_i\tau=W$ being the Sch$\ddot{a}$ffer's minimal unitary dilation of $T$ as in (\ref{eqn:uni-dil}). It is clear from (\ref{eqn:uni-dil}) that $W$ has the following block-matrix form with respect to the decomposition $\widetilde{\mathcal{K}}_0 = l^2(\mathcal{D}_T)\oplus \mathcal{H}\oplus l^2(\mathcal{D}_{T^*})$:
\[
\begin{bmatrix}
		*&*&*\\
		0&T&*\\
		0&0&*
	\end{bmatrix}.
	\] 
	Suppose $W_i=\tau^*\widehat{W}_i \tau$ for $i=1,\dots, n$. Then Lemma \ref{forunidil} tells us that with respect to the decomposition $l^2(\mathcal{D}_T)\oplus \mathcal{H}\oplus l^2(\mathcal{D}_{T^*})$ of $\widetilde{\mathcal{K}}_0$, each $W_j$ has a matrix representation of the form
	\[\begin{bmatrix}
		*&*&*\\
		0&T_j&*\\
		0&0&*
	\end{bmatrix}. \] 
	It is obvious from the blolck-matrix form that ${\mathcal{H}\oplus l^2(\mathcal{D}_T)}$ is an invariant subspace for each $W_j$ and for $W$. Set $V_j:=W_j|_{\mathcal{H}\oplus l^2(\mathcal{D}_T)}=\begin{bmatrix}
		T_j&0\\
		*&*
	\end{bmatrix}$ and $V=W|_{\mathcal{H}\oplus l^2(\mathcal{D}_T)}$. Then, $V$ is the Sch$\ddot{a}$ffer's minimal isometric dilation of $T$ on the Sch$\ddot{a}$ffer's minimal isometric dilation space ${\mathcal{H}\oplus l^2(\mathcal{D}_T)}$ and $(V_1, \dots , V_n)$ is a commuting isometric tuple that dilates $(T_1, \dots , T_n)$. Since $\prod_{i=1}^n W_i=W$, a simple computation shows that $\prod_{i=1}^n V_i=V$. Thus, $(V_1, \dots , V_n)$ is an isometric dilation of $(T_1, \dots , T_n)$ with $\prod_{i=1}^n V_i=V$ being the minimal isometric dilation of $T$. Therefore, it follows from Theorem \ref{main} that there are unique orthogonal projections $P_1, \dots , P_n$ and unique commuting unitaries $U_1, \dots , U_n$ in $\mathcal B(\mathcal D_P)$ with $\prod_{i=1}^n U_i=I_{\mathcal D_P}$ satisfying the conditions $(1)-(5)$ of this theorem. 
	
	\vsp

	\noindent\textit{\textbf{Part-(b).}}  Since $W= \prod_{i=1}^n W_i$ on $\widetilde{\mathcal K}_0$ is a minimal unitary dilation of $T= \prod_{i=1}^n T_i$, it follows from the definition of minimality that the dilation $(W_1, \dots , W_n)$ of $(T_1, \dots , T_n)$ is minimal. Thus,
\[
\widetilde{\mathcal K}_0=\overline{Span} \, \{W^nh \,: \, h\in \HS \, \, \,  \& \, \, \, n \in \mathbb Z  \} =\overline{Span} \, \{W_1^{d_1}\dots W_{n}^{d_n}h \,: \, h\in \HS \, \, \,  \& \, \, \, d_1, \dots , d_n \in \mathbb Z  \}.
\]	
If $(X_1, \dots , X_n)$ on $\widetilde{\mathcal K}_1$ is a unitary dilation of $(T_1, \dots , T_n)$ with the product $X=\prod_{i=1}^n$ being a minimal unitary dilation of $T$, then this is a minimal unitary dilation of $(T_1, \dots , T_n)$ and consequently we have
\[
\widetilde{\mathcal K}_1 =\overline{Span} \, \{X^nh \,: \, h\in \HS \, \, \,  \& \, \, \, n \in \mathbb Z  \}=\overline{Span} \, \{X_1^{d_1}\dots X_{n}^{d_n}h \,: \, h\in \HS \, \, \,  \& \, \, \, d_1, \dots , d_n \in \mathbb Z  \}.
\]	
Evidently, the unitary $\rho\,: \widetilde{\mathcal K}_0 \rightarrow \widetilde{\mathcal K}_1$ that maps $W^nh$ to $X^nh$ also maps $W_1^{d_1}\dots W_{n}^{d_n}h$ to $X_1^{d_1}\dots X_{n}^{d_n}h$ and this gives $X_i = \rho^* W_i \rho$ for $1\leq i \leq n$ so that the product $X=\prod_{i=1}^n X_i = \rho^* W \rho$. The proof is now complete.
	   
\end{proof}

Theorem \ref{main} shows that a tuple of commuting contraction $(T_1,\dots , T_n)$ admits an isometric dilation $(V_1, \dots , V_n)$ on the minimal isometric dilation space of $T=\prod_{i=1}^n T_i$ if we assume the exact five conditions as in Theorem \ref{Unimain}. Now being a subnormal tuple, $(V_1, \dots ,V_n)$ always extends to a commuting unitary tuple which must be a unitary dilation of $(T_1, \dots , T_n)$. The main achievement here is that we found such a unitary extension on the minimal unitary dilation space of $T$ without assuming any additional conditions. Needless to mention that Theorem \ref{mainlemma} plays the central role in determining this. 

\begin{rem}
	
The minimal unitary dilation of Theorem \ref{Unimain} is not unconditional even for $n=2$, though Ando's theorem tells us that every pair of commuting contractions $(T_1,T_2)$ dilates to a pair of commuting unitaries without any conditions on $(T_1,T_2)$. As we have seen in Example 3.6 in \cite{Sou:Pra} that if we choose
\[
	T_1=\begin{pmatrix}
		0&0&0\\
		1/3&0&0\\
		0&1/3\sqrt{3}&0
	\end{pmatrix} \quad \& \quad
	T_2=\begin{pmatrix}
		0&0&0\\
		0&0&0\\
		-1/\sqrt{3}&0&0
	\end{pmatrix},
	\]
then condition-$(4)$ of Theorem \ref{Unimain} is not satisfied and consequently $(T_1, T_2)$ does not dilate to a pair of commuting unitaries $(W_1,W_2)$ on the minimal unitary dilation space of $T_1T_2$ with $W_1W_2$ being the minimal unitary dilation of $T_1T_2$.

\end{rem}

We conclude this Section with the following analogue of Theorem \ref{mainlemma} for unitary dilation.

\begin{thm} \label{thm:analogue-1}
Let $T_1,\dots T_n\in \mathcal{B}(\mathcal{H})$ be commuting contractions and let $\widetilde{\mathcal K}$ be the minimal unitary dilation space for $T=\Pi_{i=1}^nT_i$. Then $(T_1, \dots , T_n)$ possesses a unitary dilation $(W_1,\dots ,W_n)$ on $\widetilde{\mathcal K}$ with $W=\prod_{i=1}^{n}W_i$ being the minimal unitary dilation of $T$ if and only if $(T_1^*, \dots , T_n^*)$ possesses a unitary dilation $(Z_1,\dots ,Z_n)$ on $\widetilde{\mathcal K_*}$ with $Z=\prod_{i=1}^{n}Z_i$ being the minimal unitary dilation of $T^*$, where $\widetilde{\mathcal K_*}$ is the minimal unitary dilation space for $T^*$.

\end{thm}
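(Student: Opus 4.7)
The strategy is to reduce the unitary dilation statement to the isometric dilation statement of Theorem \ref{mainlemma} via the characterization in Theorem \ref{Unimain}. The point is that Theorem \ref{Unimain} shows that the existence of a unitary dilation on the minimal unitary dilation space of the product, with the product of dilating operators being the minimal unitary dilation of $T$, is equivalent to the same five operator identities $(1)$--$(5)$ on $\mathcal{D}_T$ that characterize the existence of an isometric dilation on the minimal isometric dilation space of $T$ (see Theorem \ref{main}). Hence, passing from unitary to isometric dilations is already built into the framework.

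Concretely, I would argue as follows. Assume first that $(T_1,\dots,T_n)$ possesses a unitary dilation $(W_1,\dots,W_n)$ on $\widetilde{\mathcal K}$ with $W=\prod_{i=1}^n W_i$ being the minimal unitary dilation of $T$. By Theorem \ref{Unimain}, this is equivalent to the existence of unique projections $P_1,\dots,P_n$ and unique commuting unitaries $U_1,\dots,U_n$ on $\mathcal D_T$ with $\prod U_i = I$ satisfying conditions $(1)$--$(5)$. By Theorem \ref{main}, the same data produce an isometric dilation $(V_1,\dots,V_n)$ of $(T_1,\dots,T_n)$ on the minimal isometric dilation space $\mathcal K$ of $T$ with $V=\prod V_i$ being the minimal isometric dilation of $T$. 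Then Theorem \ref{mainlemma} applies and yields an isometric dilation $(Y_1,\dots,Y_n)$ of $(T_1^*,\dots,T_n^*)$ on the minimal isometric dilation space $\mathcal K_*$ of $T^*$ with $Y=\prod Y_i$ being the minimal isometric dilation of $T^*$. Invoking Theorem \ref{main} in the other direction, this furnishes unique projections $Q_1,\dots,Q_n$ and commuting unitaries $\widetilde U_1,\dots,\widetilde U_n$ on $\mathcal D_{T^*}$ with $\prod \widetilde U_i = I$ satisfying the analogous five conditions $(1')$--$(5')$. Finally, feeding these into Theorem \ref{Unimain} (applied to $(T_1^*,\dots,T_n^*)$) produces a unitary dilation $(Z_1,\dots,Z_n)$ on $\widetilde{\mathcal K}_*$ with $Z=\prod Z_i$ being the minimal unitary dilation of $T^*$.

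The reverse implication is completely symmetric: starting from the unitary dilation hypothesis on $(T_1^*,\dots,T_n^*)$, apply Theorem \ref{Unimain} to produce the projections and commuting unitaries on $\mathcal D_{T^*}$, use Theorem \ref{main} to obtain an isometric dilation on $\mathcal K_*$, apply Theorem \ref{mainlemma} (in the opposite direction) to get the corresponding isometric dilation for $(T_1,\dots,T_n)$ on $\mathcal K$, and then invoke Theorem \ref{Unimain} once more to lift this to a unitary dilation on $\widetilde{\mathcal K}$. Since both halves are handled by the same chain of equivalences, the argument is short.

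No step of the plan presents a serious obstacle since all the heavy lifting --- the explicit construction of the unitary dilation on the Sch\"affer-type space, the matching of projections/unitaries on $\mathcal D_T$ with their counterparts on $\mathcal D_{T^*}$, and the verification of the fundamental operator identities --- has already been carried out in Theorems \ref{Unimain} and \ref{mainlemma}. The only mild care needed is to keep the roles of $\mathcal K$ vs.\ $\widetilde{\mathcal K}$ and of $T$ vs.\ $T^*$ straight when citing these results; once that bookkeeping is done, the statement follows by chaining the four equivalences above.
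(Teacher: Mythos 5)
Your proposal is correct and follows essentially the same route as the paper: the paper's proof of Theorem \ref{thm:analogue-1} is exactly the chain Theorem \ref{Unimain} $\rightarrow$ conditions $(1)$--$(5)$ $\rightarrow$ Theorem \ref{main} $\rightarrow$ isometric dilation $\rightarrow$ Theorem \ref{mainlemma} $\rightarrow$ isometric dilation of the adjoint tuple $\rightarrow$ Theorem \ref{main} $\rightarrow$ conditions on $\mathcal{D}_{T^*}$ $\rightarrow$ Theorem \ref{Unimain} for $(T_1^*,\dots,T_n^*)$, with the converse handled by symmetry. No gaps.
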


\begin{proof}

Suppose $(T_1, \dots , T_n)$ possesses a unitary dilation $(W_1, \dots , W_n)$ on $\widetilde{\mathcal K}$ with $W$ being the minimal unitary dilation of $T$. Then $(T_1, \dots , T_n)$ satisfies conditions $(1)-(5)$ of Theorem \ref{Unimain} and hence by Theorem \ref{main}, it admits an isometric dilation $(V_1, \dots , V_n)$ on the minimal isometric dilation space $\mathcal K$ of $T$ with $V=\prod_{i=1}^n V_i$ being the minimal isometric dilation of $T$. So, by Theorem \ref{mainlemma}, $(T_1^*, \dots , T_n^*)$ possesses an isometric dilation $(Y_1, \dots , Y_n)$ on the minimal isometric dilation space $\mathcal K_*$ of $T^*$ with $Y=\prod_{i=1}^n Y_i$ being the minimal isometric dilation of $T^*$. Again applying Theorem \ref{main} on $(T_1^*, \dots , T_n^*)$ we have that there are unique orthogonal projections $Q_1, \dots , Q_n$ and unique commuting unitaries $\widetilde{U}_1, \dots , \widetilde{U}_n$ in $\mathcal B(\mathcal D_{T^*})$ with $\prod_{i=1}^n \widetilde{U_i}=I_{\mathcal D_{T^*}}$ satisfying the hypotheses of Theorem \ref{main}. Thus, with the same hypotheses we apply Theorem \ref{Unimain} to obtain a unitary dilation $(Z_1, \dots, Z_n)$ of $(T_1^*, \dots , T_n^*)$ on the minimal unitary dilation space $\widetilde{\mathcal K_*}$ of $T^*$.

\end{proof}

\section{Minimal unitary dilation when the product is a $C._0$ contraction}
\vspace{0.4cm}

\noindent In this Section, we show that only four out of five conditions of Theorem \ref{Unimain} are necessary and sufficient for the existence of a unitary dilation of a tuple of commuting contraction $(T_1, \dots , T_n)$ when the product $T=\prod_{i=1}^n T_i$ is a $C._0$ contraction. First we recall a pair of isometric dilation theorems in this setting from \cite{Sou:Pra}. 

\begin{thm}\label{puredil}
	Let $T_1,\ldots ,T_n$ be commuting contractions on a Hilbert space $\mathcal{H}$ such that their product $T=\Pi_{i=1}^nT_i$ is a $C._0$ contraction. Then $(T_1,\ldots ,T_n)$ possesses an isometric dilation $(V_1,\ldots ,V_n)$ on $\mathcal{K}$ with $V=\Pi_{i=1}^nV_i$ being a minimal isometric dilation of $T$ if and only if there are unique orthogonal projections $P_1,\ldots ,P_n$ and unique commuting unitaries $U_1,\ldots ,U_n$ in $\mathcal{B}(\mathcal{D}_{T^*})$ with $\prod_{i=1}^n U_i=I_{\mathcal D_{T^*}}$ such that the following hold for $i=1, \dots , n$ :
	\begin{enumerate}
		\item $D_{T^*}T_i^*=P_i^{\perp}U_i^*D_{T^*}+P_iU_i^*D_{T^*}T^*$,
		\item $P_i^{\perp}U_i^*P_j^{\perp}U_j^*=P_j^{\perp}U_j^*P_i^{\perp}U_i^*$ ,
		\item $U_iP_iU_jP_j=U_jP_jU_iP_i$ ,
		\item  $P_1+U_1^*P_2U_1+U_1^*U_2^*P_3U_2U_1+\ldots +U_1^*U_2^*\ldots U_{n-1}^*P_nU_{n-1}\ldots U_2U_1 =I_{\mathcal{D}_{T^*}}$. 
	\end{enumerate} 
\end{thm}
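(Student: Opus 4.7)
The plan is to exploit the Sz. Nagy--Foias functional model of a $C._0$ contraction. Since $T$ is $C._0$, its minimal isometric dilation $V$ is a pure isometry and hence (by the Wold decomposition) unitarily equivalent to $M_{z}$ on $H^{2}(\mathcal{D}_{T^{*}})$; the embedding $W:\mathcal{H}\hookrightarrow H^{2}(\mathcal{D}_{T^{*}})$ given by
\[
Wh=\sum_{k\geq 0}z^{k}D_{T^{*}}T^{*k}h
\]
is an isometry, $W\mathcal{H}$ is co-invariant for $M_{z}$, and $M_{z}^{*}|_{W\mathcal{H}}\equiv T^{*}$. The map $XD_{T^{*}}h=D_{V^{*}}h$ (as in the proof of Theorem~\ref{mainlemma}) identifies $\mathcal{D}_{T^{*}}$ unitarily with $\mathcal{D}_{V^{*}}$, so we may freely transfer operators between the two spaces and identify $\mathcal{K}$ with $H^{2}(\mathcal{D}_{T^{*}})$ throughout.

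For the \emph{if} direction, define
\[
V_{i}=M_{U_{i}P_{i}^{\perp}+zU_{i}P_{i}}\quad\text{on }H^{2}(\mathcal{D}_{T^{*}}),\qquad i=1,\dots,n.
\]
Given the commuting unitaries $U_{i}$ with $\prod U_{i}=I$, conditions $(2)$, $(3)$, $(4)$ of the theorem translate degree by degree in $z$ into the hypotheses of Lemma~\ref{BDF lemma O}: the $z^{0}$ and $z^{2}$ coefficients of $V_{i}V_{j}-V_{j}V_{i}$ are exactly the mismatches forbidden by $(2)$ and $(3)$, the $z^{1}$ coefficient vanishes automatically because the $U_{i}$'s commute, and $(4)$ encodes $\prod_{i=1}^{n}V_{i}=M_{z}$. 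Hence $(V_{1},\dots,V_{n})$ is a commuting tuple of isometries on $H^{2}(\mathcal{D}_{T^{*}})$ with product $M_{z}$. To verify the dilation property, a direct Fourier-coefficient computation for the co-analytic Toeplitz operator $V_{i}^{*}$ gives
\[
(V_{i}^{*}Wh)_{k}=P_{i}^{\perp}U_{i}^{*}D_{T^{*}}T^{*k}h+P_{i}U_{i}^{*}D_{T^{*}}T^{*(k+1)}h,
\]
and applying condition $(1)$ to the vector $T^{*k}h$ (using $T_{i}T=TT_{i}$) collapses the right-hand side to $D_{T^{*}}T^{*k}T_{i}^{*}h=(WT_{i}^{*}h)_{k}$. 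Therefore $V_{i}^{*}W=WT_{i}^{*}$, which says that $W\mathcal{H}$ is co-invariant for each $V_{i}$ with $V_{i}^{*}|_{W\mathcal{H}}\equiv T_{i}^{*}$; consequently $(V_{1},\dots,V_{n})$ dilates $(T_{1},\dots,T_{n})$ with product $M_{z}$ the minimal isometric dilation of $T$.

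For the \emph{only if} direction, suppose $(V_{1},\dots,V_{n})$ is an isometric dilation with $V=\prod V_{i}$ minimal. Since $T$ is $C._0$, $V$ is a pure isometry on $\mathcal{K}$, so the refined Berger-Coburn-Lebow Theorem~\ref{BCL} produces unique orthogonal projections $P_{1},\dots,P_{n}$ and unique unitaries $U_{1},\dots,U_{n}$ on $\mathcal{D}_{V^{*}}$, which we transport to $\mathcal{D}_{T^{*}}$ via $X$, such that $V_{i}\equiv M_{U_{i}P_{i}^{\perp}+zU_{i}P_{i}}$ on $H^{2}(\mathcal{D}_{T^{*}})$. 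The commutativity of the $V_{i}$'s and $\prod V_{i}=M_{z}$ immediately force commutativity of the $U_{i}$'s, the identity $\prod U_{i}=I$, and conditions $(2)$, $(3)$, $(4)$. Condition $(1)$ is then extracted by reading off the $k=0$ Fourier coefficient of the identity $V_{i}^{*}Wh=WT_{i}^{*}h$, which is forced by the dilation hypothesis together with the co-invariance of $W\mathcal{H}$ under each $V_{i}$; this co-invariance, and the relation $V_{i}^{*}|_{W\mathcal{H}}\equiv T_{i}^{*}$, follows from the Sch\"affer-type upper-triangular form of $V_{i}$ with respect to a suitable decomposition of $\mathcal{K}$, analogous to Lemma~\ref{forunidil}.

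Uniqueness of the $P_{i}$ and $U_{i}$ is inherited from the uniqueness clause of Theorem~\ref{BCL}, which determines $P_{i}U_{i}$ and $U_{i}^{*}P_{i}^{\perp}$ from the defect identities $V_{i}^{*}-V_{i}'V^{*}=D_{V^{*}}U_{i}^{*}P_{i}^{\perp}D_{V^{*}}$ and $V_{i}'^{*}-V_{i}V^{*}=D_{V^{*}}P_{i}U_{i}D_{V^{*}}$. The main obstacle is the careful bookkeeping required to move the BCL data back and forth between the concrete functional-model space $H^{2}(\mathcal{D}_{T^{*}})$ and the abstract minimal dilation space $\mathcal{K}$; this passage is clean here exactly because the $C._0$ hypothesis eliminates the unitary summand in the Wold decomposition of $V$, which is also the structural reason why the extra hypothesis $D_{T}U_{i}P_{i}U_{i}^{*}D_{T}=D_{T_{i}}^{2}$ of Theorem~\ref{main} (needed there to control the non-pure part of $V$) can be dispensed with here.
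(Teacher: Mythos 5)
This paper states Theorem \ref{puredil} without proof (it is recalled verbatim from \cite{Sou:Pra}), but the way the paper invokes ``the proof of Theorem \ref{puredil} from \cite{Sou:Pra}'' inside the proof of Theorem \ref{purediluni} --- namely, that the dilation is the Toeplitz tuple $(T_{U_1P_1^{\perp}+zU_1P_1},\ldots,T_{U_nP_n^{\perp}+zU_nP_n})$ on $H^2(\mathcal{D}_{T^*})$ with product $T_z$ --- shows that the cited proof follows exactly your route. Your argument is correct and is essentially that same argument: the forward direction via the model embedding $Wh=\sum_{k\geq 0}z^kD_{T^*}T^{*k}h$ together with Lemma \ref{BDF lemma O}, and the converse via purity of $V$ and the refined Berger--Coburn--Lebow Theorem \ref{BCL}; the only spots that deserve a few more lines are the derivation of the Bercovici--Douglas--Foias condition $P_j+U_j^*P_iU_j=P_i+U_i^*P_jU_i\leq I$ from conditions $(2)$--$(4)$ (the computation carried out around \eqref{key condition} in the proof of Theorem \ref{Unimain}) and the uniqueness clause, which ultimately rests on compressing the defect identities to $\mathcal{H}$ and applying the uniqueness statement of Lemma \ref{fundexist} to $T_i^*-T_i'T^*=D_{T^*}P_i^{\perp}U_i^*D_{T^*}$ and $T_i'^*-T_iT^*=D_{T^*}U_iP_iD_{T^*}$.
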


We have also seen in \cite{Sou:Pra} that an isometric dilation can be constructed with conditions $(1)-(3)$ only of Theorem \ref{puredil}. We present the result below.

\begin{thm}\label{coropuredil}
	Let $T_1,\ldots, T_n \in \mathcal{B}(\mathcal{H})$ be commuting contractions such that $T=\prod_{i=1}^nT_i$ is a $C._0$ contraction. Let $T_i'=\prod_{i\neq j} T_j$ for $1\leq i \leq n$. Then $(T_1,\ldots ,T_n)$ possesses an isometric dilation on the minimal isometric dilation space of $T$, if there are projections $P_1,\ldots ,P_n$ and commuting unitaries $U_1,\ldots ,U_n$ in $\mathcal{B}(\mathcal{D}_{T^*})$ such that the following hold for $i=1, \dots, n$:
	\begin{enumerate} 
		\item $D_TT_i=P_i^{\perp}U_i^*D_T+P_iU_i^*D_TT$ ,
		\item  $P_i^{\perp}U_i^*P_j^{\perp}U_j^*=P_j^{\perp}U_j^*P_i^{\perp}U_i^*$ ,
		\item $U_iP_iU_jP_j=U_jP_jU_iP_i$ .
	\end{enumerate} 
	Conversely, if a commuting tuple of contractions $(T_1,\ldots ,T_n)$, with the product $T=\prod_{i=1}^n T_i$ being pure, possesses an isometric dilation $(\widehat{V}_1,\ldots,\widehat{V}_{n})$, where $\hat{V}=\prod_{i=1}^n\hat{V}_i$ is the minimal isometric dilation of $T$, then there are unique projections $P_1,\ldots ,P_n$ and unique commuting unitaries $U_1,\ldots ,U_n$ in $\mathcal{B}(\mathcal{D}_{T^*})$ satisfying the conditions $(1)-(3)$ above. 
\end{thm}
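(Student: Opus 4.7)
The plan is to handle the two directions separately, with the converse being essentially immediate from the refined Berger--Coburn--Lebow theorem and the direct part requiring more care.

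For the converse, suppose $(\hat V_1,\ldots,\hat V_n)$ is an isometric dilation of $(T_1,\ldots,T_n)$ on the minimal isometric dilation space of $T$ with $\hat V=\prod_{i=1}^n\hat V_i$ the minimal isometric dilation of $T$. Since $T$ is $C._0$, the dilation $\hat V$ is a pure isometry, so Theorem \ref{BCL} applies to the commuting tuple $(\hat V_1,\ldots,\hat V_n)$ and yields unique projections $P_i$ and commuting unitaries $U_i$ on $\mathcal D_{\hat V^*}\cong\mathcal D_{T^*}$. Translating the BCL identities $\hat V_i^*-\hat V_i'\hat V^*=D_{\hat V^*}U_i^*P_i^{\perp}D_{\hat V^*}$ and $\hat V_i'^*-\hat V_i\hat V^*=D_{\hat V^*}P_iU_iD_{\hat V^*}$ back to $\mathcal H$ using the dilation property and the canonical unitary $X\colon\mathcal D_{T^*}\to\mathcal D_{\hat V^*}$ (as in the proof of Theorem \ref{mainlemma}) produces conditions $(1)$--$(3)$, and uniqueness is inherited from the uniqueness assertion in Theorem \ref{BCL}.

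For the direct part, I would realise the minimal isometric dilation of the $C._0$ contraction $T$ as $M_z$ on $H^2(\mathcal D_{T^*})$ via the Sz.-Nagy--Foias embedding $\Pi h(z)=D_{T^*}(I-zT^*)^{-1}h$, and define $V_i=M_{\varphi_i}$ with the BCL-type analytic symbol $\varphi_i(z)=U_iP_i^{\perp}+zU_iP_i$. Each $V_i$ is an isometry because $\varphi_i$ has pointwise unitary values on the unit circle (this uses only $U_iU_i^*=I$ and $P_iP_i^{\perp}=0$). Computing the Fourier coefficients of $V_i^*\Pi h$ shows that the dilation relation $V_i^*\Pi h=\Pi T_i^*h$ reduces to precisely the operator identity in condition $(1)$, applied at $h\in\mathcal H$ for the zeroth Fourier coefficient and at $T^{*k}h\in\mathcal H$ for the $k$-th, with commutativity of $T$ with $T_i$ carrying $T_i^*$ past $T^{*k}$.

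The main obstacle is proving $V_iV_j=V_jV_i$. This is equivalent to $\varphi_i\varphi_j=\varphi_j\varphi_i$, which upon expansion in powers of $z$ gives three coefficient equations: the constant term matches the adjoint of condition $(2)$, the $z^2$ term matches condition $(3)$, but the $z$ term produces the genuinely new identity $U_iP_i^{\perp}U_jP_j+U_iP_iU_jP_j^{\perp}=U_jP_j^{\perp}U_iP_i+U_jP_jU_iP_i^{\perp}$. The plan is to derive from $(2)$, $(3)$ and $[U_i,U_j]=0$ the Bercovici--Douglas--Foias relation $P_i+U_i^*P_jU_i=P_j+U_j^*P_iU_j$ (by expanding the adjoint of $(2)$ via $P_k^{\perp}=I-P_k$ and cancelling mixed terms using the adjoint of $(3)$ together with $U_i^*U_j^*=U_j^*U_i^*$), then multiply this identity on the left by $U_jU_i$ and simplify using $[U_i,U_j]=0$ to obtain the required $z$-coefficient identity. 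This manipulation mirrors the analysis behind Lemma \ref{BDF lemma O} and goes through without any appeal to the sum condition from Theorem \ref{puredil}, which is why hypotheses $(1)$--$(3)$ alone suffice here.
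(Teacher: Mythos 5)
Your proposal is correct and follows essentially the same route as the paper, which imports this result from \cite{Sou:Pra} and reuses exactly this construction in Theorems \ref{purediluni} and \ref{coropurediluni}: realise the minimal isometric dilation of the $C._0$ contraction $T$ as $M_z$ on $H^2(\mathcal{D}_{T^*})$, set $V_i=T_{U_iP_i^{\perp}+zU_iP_i}$, obtain the dilation property from condition $(1)$ and commutativity of the symbols from $(2)$--$(3)$ (your reduction of the $z$-coefficient identity to the Bercovici--Douglas--Foias relation checks out), and get the converse with uniqueness from the refined Berger--Coburn--Lebow theorem transported by the canonical unitary $\mathcal{D}_{T^*}\to\mathcal{D}_{\hat V^*}$. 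Note only that you are (rightly) reading condition $(1)$ in its starred form $D_{T^*}T_i^{*}=P_i^{\perp}U_i^{*}D_{T^*}+P_iU_i^{*}D_{T^*}T^{*}$, which is the only version compatible with $P_i,U_i\in\mathcal{B}(\mathcal{D}_{T^*})$ and with Theorem \ref{puredil}; the unstarred form printed in the statement is a typo.
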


We now present the unitary dilation theorem and this is the main result of this Section.

\begin{thm}\label{purediluni}
	Let $T_1,\ldots ,T_n$ be commuting contractions on a Hilbert space $\mathcal{H}$ such that their product $T=\Pi_{i=1}^nT_i$ is a $C._0$ contraction. Then $(T_1,\ldots ,T_n)$ possesses a unitary dilation $(W_1,\ldots ,W_n)$ on $\widetilde{\mathcal K}$ with $W=\prod_{i=1}^nW_i$ being the minimal unitary dilation of $T$ on $\widetilde{\mathcal K}$ if and only if there are unique projections $Q_1,\ldots ,Q_n$ and unique commuting unitaries $\widetilde{U}_1,\ldots ,\widetilde{U}_n$ in $\mathcal{B}(\mathcal{D}_{T^*})$ such that $\prod_{i=1}^n\widetilde{U}_i=I_{\mathcal{D}_{T^*}}$ and the following conditions hold for $i=1,\dots , n$:
	\begin{enumerate}
		\item $D_{T^*}T_i^*=Q_i^{\perp}\widetilde{U}_i^*D_{T^*}+Q_i\widetilde{U}_i^*D_{T^*}T^*$,
		\item $Q_i^{\perp}\widetilde{U}_i^*Q_j^{\perp}\widetilde{U}_j^*=Q_j^{\perp}\widetilde{U}_j^*Q_i^{\perp}\widetilde{U}_i^*$ ,
		\item $\widetilde{U}_iQ_i\widetilde{U}_jQ_j=\widetilde{U}_jQ_j\widetilde{U}_iQ_i$,
	\item  $Q_1+\widetilde{U}_1^*Q_2\widetilde{U}_1+\widetilde{U}_1^*\widetilde{U}_2^*Q_3\widetilde{U}_2\widetilde{U}_1+\ldots +\widetilde{U}_1^*\widetilde{U}_2^*\ldots \widetilde{U}_{n-1}^*Q_n\widetilde{U}_{n-1}\ldots \widetilde{U}_2\widetilde{U}_1 =I_{\mathcal{D}_T}$. 
	\end{enumerate} 
\end{thm}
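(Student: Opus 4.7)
The plan is to reduce Theorem \ref{purediluni} to its isometric companion Theorem \ref{puredil}, exploiting the fact that when $T$ is $C_{\cdot 0}$ the minimal isometric dilation of $T$ is a pure isometry whose minimal unitary extension is the bilateral shift on $L^2(\mathcal{D}_{T^*})$.

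For the $(\Leftarrow)$ direction I would first invoke Theorem \ref{puredil} to obtain, from conditions $(1)$-$(4)$, an isometric dilation $(V_1,\ldots,V_n)$ of $(T_1,\ldots,T_n)$ on the minimal isometric dilation space $\mathcal{K}$ of $T$ with $V=\prod_{i=1}^n V_i$ being the minimal isometric dilation of $T$. Since $T$ is $C_{\cdot 0}$, $V$ is a pure isometry with $\dim\mathcal{D}_{V^*}=\dim\mathcal{D}_{T^*}$, so a canonical unitary identifies $\mathcal{K}$ with $H^2(\mathcal{D}_{T^*})$ sending $V$ to $M_z$. Applying Theorem \ref{BCL} together with its uniqueness clause, and comparing with condition $(1)$ of the hypothesis, identifies each $V_i$ with the Toeplitz operator $T_{\widetilde{U}_iQ_i^\perp+z\widetilde{U}_iQ_i}$ on $H^2(\mathcal{D}_{T^*})$, the parameters being the prescribed $\widetilde{U}_i,Q_i$.

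Next, I would take the minimal unitary dilation of $T$ to be $W=M_z$ on $\widetilde{\mathcal{K}}:=L^2(\mathcal{D}_{T^*})$ (which is unitarily equivalent to any minimal unitary dilation, by Sz. Nagy-Foias uniqueness), and define
\[
W_i := M_{\widetilde{U}_iQ_i^\perp+z\widetilde{U}_iQ_i}\quad\text{on}\quad L^2(\mathcal{D}_{T^*}),\qquad 1\le i\le n.
\]
For $|z|=1$ the symbol $\widetilde{U}_i(Q_i^\perp+zQ_i)$ is unitary, so each $W_i$ is a unitary on $L^2(\mathcal{D}_{T^*})$; commutativity of $(W_1,\ldots,W_n)$ and the identity $\prod W_i=M_z=W$ follow from conditions $(2)$-$(4)$ together with $\prod\widetilde{U}_i=I$, via Lemma \ref{BDF lemma O}. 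Because $W_i$ restricts to $V_i$ on $H^2(\mathcal{D}_{T^*})$ and $V_i$ already dilates $T_i$, the tuple $(W_1,\ldots,W_n)$ is the desired unitary dilation with $\prod W_i$ equal to the minimal unitary dilation of $T$.

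For the $(\Rightarrow)$ direction, suppose $(W_1,\ldots,W_n)$ on $\widetilde{\mathcal K}$ is a unitary dilation of $(T_1,\ldots,T_n)$ with $W=\prod W_i$ being the minimal unitary dilation of $T$. By the uniqueness of the minimal unitary dilation, I may assume $\widetilde{\mathcal K}=l^2(\mathcal{D}_T)\oplus\mathcal{H}\oplus l^2(\mathcal{D}_{T^*})$ is the Sch\"affer model. Lemma \ref{forunidil} forces each $W_j$ to be block upper triangular with $T_j$ in the $(2,2)$ slot, so $l^2(\mathcal{D}_T)\oplus\mathcal{H}$ is invariant under each $W_j$ and $V_j:=W_j|_{l^2(\mathcal{D}_T)\oplus\mathcal{H}}$ yields a commuting isometric dilation of $(T_1,\ldots,T_n)$ with $V=\prod V_j$ being the Sch\"affer minimal isometric dilation of $T$. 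Theorem \ref{puredil} then produces the unique projections $Q_1,\ldots,Q_n$ and commuting unitaries $\widetilde{U}_1,\ldots,\widetilde{U}_n$ on $\mathcal{D}_{T^*}$ with $\prod\widetilde{U}_i=I$ satisfying $(1)$-$(4)$.

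The main technical step is the identification in the $(\Leftarrow)$ direction: showing that the BCL parameters of $(V_1,\ldots,V_n)$ coming out of Theorem \ref{puredil} match the $(\widetilde{U}_i,Q_i)$ supplied by the hypothesis. This is the reason condition $(1)$ is exactly what is needed; in its adjoint form it reads $T_iD_{T^*}=D_{T^*}\widetilde{U}_iQ_i^\perp+TD_{T^*}\widetilde{U}_iQ_i$, and together with the uniqueness clause of Theorem \ref{BCL} this forces the BCL symbol of $V_i$ to be $\widetilde{U}_iQ_i^\perp+z\widetilde{U}_iQ_i$. Once this matching is secured, the passage between $H^2(\mathcal{D}_{T^*})$ and $L^2(\mathcal{D}_{T^*})$ via multiplication operators is automatic.
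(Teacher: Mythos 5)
Your proposal is correct, and your $(\Leftarrow)$ direction is essentially the paper's: both produce the isometric dilation of $(T_1,\dots,T_n)$ as the Toeplitz tuple $(T_{\widetilde{U}_1Q_1^{\perp}+z\widetilde{U}_1Q_1},\dots,T_{\widetilde{U}_nQ_n^{\perp}+z\widetilde{U}_nQ_n})$ on $H^2(\mathcal D_{T^*})$ coming out of Theorem \ref{puredil} and then pass to the multiplication operators on $L^2(\mathcal D_{T^*})$, which extend the Toeplitz isometries and multiply to the bilateral shift $M_z$, the minimal unitary dilation of the $C._0$ contraction $T$. (The paper simply quotes the construction from the proof of Theorem \ref{puredil} in \cite{Sou:Pra}, whereas you re-derive the identification of the BCL parameters with the prescribed $\widetilde{U}_i,Q_i$ via the uniqueness clause of Theorem \ref{BCL}; this is the same content packaged differently.) Where you genuinely diverge is the $(\Rightarrow)$ direction. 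The paper takes a longer route: it invokes the converse of Theorem \ref{Unimain} to get parameters $P_i,U_i$ on $\mathcal D_T$, feeds these back into Theorem \ref{main} to recover an isometric dilation, applies the duality result Theorem \ref{mainlemma} to transfer to an isometric dilation of $(T_1^*,\dots,T_n^*)$, and finally applies Theorem \ref{main} to the adjoints to land in $\mathcal B(\mathcal D_{T^*})$. You instead compress the unitaries directly: Lemma \ref{forunidil} gives the block-triangular form, $l^2(\mathcal D_T)\oplus\mathcal H$ is a common invariant subspace, the restrictions form a commuting isometric dilation whose product is the Sch\"affer minimal isometric dilation of $T$, and the converse of the $C._0$-specific Theorem \ref{puredil} then hands you the unique $Q_i,\widetilde U_i$ in $\mathcal B(\mathcal D_{T^*})$ satisfying exactly conditions $(1)$--$(4)$. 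Your route is shorter, avoids the adjoint-duality machinery of Theorem \ref{mainlemma} altogether, and obtains the uniqueness statement with respect to the four stated conditions directly from Theorem \ref{puredil} rather than as a by-product of the five conditions of Theorem \ref{main}; what it costs is nothing beyond what the paper already assumes, since Lemma \ref{forunidil} and Theorem \ref{puredil} are both quoted elsewhere in the paper. Both routes are valid.
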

\begin{proof}
	First suppose that there are projections $Q_1,\ldots ,Q_n$ and commuting unitaries $\widetilde{U}_1,\ldots ,\widetilde{U}_n$ in $\mathcal{B}(\mathcal{D}_{T^*})$ satisfying the given conditions. Then following the proof of Theorem \ref{puredil} from \cite{Sou:Pra} we have that the Toeplitz operator tuple  $(T_{\widetilde{U}_1Q_1^{\perp}+z\widetilde{U}_1Q_1},\ldots ,T_{\widetilde{U}_nQ_n^{\perp}+z\widetilde{U}_nQ_n})$ on $H^2(\mathcal{D}_{T^*})$ is an isometric dilation of $(T_1, \dots , T_n)$ with their product $\prod_{i=1}^n T_{\widetilde{U}_1Q_1^{\perp}+z\widetilde{U}_1Q_1}=T_z$ being the minimal isometric dilation of the $C._0$ contraction $T$. It is obvious that $(W_1, \dots , W_n)=(M_{\widetilde{U}_1Q_1^{\perp}+z\widetilde{U}_1Q_1},\ldots ,M_{\widetilde{U}_nQ_n^{\perp}+z\widetilde{U}_nQ_n})$ on $L^2(\mathcal{D}_{T^*})$ is a unitary extension of $(M_{\widetilde{U}_1Q_1^{\perp}+z\widetilde{U}_1Q_1},\ldots ,M_{\widetilde{U}_nQ_n^{\perp}+z\widetilde{U}_nQ_n})$ on $H^2(\mathcal{D}_{T^*})$. So, $(W_1, \dots , W_n)$ on $L^2(\mathcal D_{T^*})$ is a unitary dilation of $(T_1, \dots , T_n)$. Needless to mention that $\prod_{i=1}^n W_i=M_z$ is the minimal unitary dilation of $T$.

Conversely, suppose $ (W_1,\ldots,W_n)$ is a unitary dilation of $(T_1,\ldots ,T_n)$ on the minimal unitary dilation space $\widetilde{\mathcal{K}}$ of $T$ with $\prod_{i=1}^nW_i=W$ being the minimal unitary dilation of $T$. Without loss of generality let $\widetilde{\mathcal{K}}=l^2(\mathcal{D}_T)\oplus \mathcal{H}\oplus l^2(\mathcal{D}_{T^*})$. Then by converse part of Theorem \ref{Unimain}, there are unique projections $P_1,\dots ,P_n$ and unique commuting unitaries $U_1,\dots ,U_n$ in $\mathcal{B}(\mathcal{D}_{T})$ such that $\prod_{i=1}^nU_i=I_{\mathcal{D}_{T}}$ and conditions $(1)-(5)$ in the statement of Theorem \ref{Unimain} are satisfied. Thus $(\Leftarrow)$ part of Theorem \ref{main} tells us that $(T_1,\dots ,T_n)$ possesses an isometric dilation $(V_1,\dots ,V_n)$ on the minimal isometric dilation space of $T$ with $\prod_{i=1}^nV_i$ being the minimal isometric dilation of $T$. Then by Theorem \ref{mainlemma}, $(T_1^*,\dots ,T_n^*)$ possesses an isometric dilation $(Y_1,\dots ,Y_n)$ on the minimal isometric dilation space of $T^*$ with $\prod_{i=1}^nY_i$ being the minimal isometric dilation of $T^*$. Hence, by an application of Theorem \ref{main} again we have that there are unique projections $Q_1,\ldots ,Q_n$ and unique commuting unitaries $\widetilde{U}_1,\ldots ,\widetilde{U}_n$ in $\mathcal{B}(\mathcal{D}_{T^*})$ such that $\prod_{i=1}^n\widetilde{U}_i=I_{\mathcal{D}_{T^*}}$ such that the conditions $(1)-(4)$ are satisfied. Hence the proof is complete.
 
\end{proof}

We now present an analogue of Theorem \ref{coropuredil} for a unitary dilation when the product $T$ is a $C._{0}$ contraction.
 
\begin{thm}\label{coropurediluni}
	Let $T_1,\ldots, T_n \in \mathcal{B}(\mathcal{H})$ be commuting contractions such that $T=\prod_{i=1}^nT_i$ is a $C._0$ contraction. Let $T_i'=\prod_{i\neq j} T_j$ for $1\leq i \leq n$. Then $(T_1,\ldots ,T_n)$ possesses a unitary dilation on the minimal unitary dilation space of $T$, if there are projections $P_1,\ldots ,P_n$ and commuting unitaries $U_1,\ldots ,U_n$ in $\mathcal{B}(\mathcal{D}_{T^*})$ such that the following hold for $i=1, \dots, n$:
	\begin{enumerate} 
		\item $D_TT_i=P_i^{\perp}U_i^*D_T+P_iU_i^*D_TT$ ,
		\item  $P_i^{\perp}U_i^*P_j^{\perp}U_j^*=P_j^{\perp}U_j^*P_i^{\perp}U_i^*$ ,
		\item $U_iP_iU_jP_j=U_jP_jU_iP_i$.
	\end{enumerate} 
	Conversely, if a commuting tuple of contractions $(T_1,\ldots ,T_n)$, with the product $T=\prod_{i=1}^n T_i$ being a $C._0$ contraction, possesses a unitary dilation $(W_1,\ldots,W_{n})$, where $W=\prod_{i=1}^nW_i$ is the minimal unitary dilation of $T$, then there are unique projections $P_1,\ldots ,P_n$ and unique commuting unitaries $U_1,\ldots ,U_n$ in $\mathcal{B}(\mathcal{D}_{T^*})$ satisfying the conditions $(1)-(3)$ above. 
\end{thm}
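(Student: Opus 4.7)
My plan is to reduce to Theorems \ref{coropuredil} and \ref{purediluni}, exploiting the functional model for $C._0$ contractions: the minimal isometric and minimal unitary dilation spaces of $T$ are (up to unitary equivalence) $H^2(\mathcal{D}_{T^*})$ and $L^2(\mathcal{D}_{T^*})$ respectively, with $M_z$ being the minimal isometric (resp.\ unitary) dilation of $T$ in each case.

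For the forward direction, assume conditions $(1)$--$(3)$. By Theorem \ref{coropuredil}, $(T_1,\ldots ,T_n)$ admits an isometric dilation $(V_1,\ldots ,V_n)$ on the minimal isometric dilation space of $T$. Tracing the construction in the proof of Theorem \ref{puredil}, each $V_i$ has the Berger--Coburn--Lebow form $M_{U_iP_i^{\perp}+zU_iP_i}$ on $H^2(\mathcal{D}_{T^*})$. The natural candidate for the unitary dilation is then the Laurent extension $W_i := M_{U_iP_i^{\perp}+zU_iP_i}$ on $L^2(\mathcal{D}_{T^*})$. Three verifications remain: each $W_i$ is unitary, because on the unit circle the symbol $U_i(P_i^{\perp}+zP_i)$ is unitary-valued via the identity $(P_i^{\perp}+zP_i)(P_i^{\perp}+\bar z P_i)=P_i^{\perp}+P_i=I$; the tuple $(W_1,\ldots ,W_n)$ commutes, since commutativity of the corresponding Toeplitz operators $V_i$ is equivalent to pointwise commutativity of their symbols on the circle, which then passes to the Laurent multipliers $W_i$; and the dilation property for $(T_1,\ldots ,T_n)$ transfers from $(V_1,\ldots ,V_n)$ to $(W_1,\ldots ,W_n)$ because $W_i|_{H^2(\mathcal{D}_{T^*})} = V_i$ and $\HS \subseteq H^2(\mathcal{D}_{T^*}) \subseteq L^2(\mathcal{D}_{T^*})$. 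Since $L^2(\mathcal{D}_{T^*})$ is the minimal unitary dilation space of the $C._0$ contraction $T$, the construction is complete.

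For the converse, suppose $(W_1,\ldots ,W_n)$ is a unitary dilation of $(T_1,\ldots ,T_n)$ on the minimal unitary dilation space $\widetilde{\mathcal K}$ of $T$ with $W=\prod_{i=1}^n W_i$ being the minimal unitary dilation of $T$. Then Theorem \ref{purediluni} applies verbatim and produces unique projections $Q_1,\ldots ,Q_n$ and unique commuting unitaries $\widetilde U_1,\ldots ,\widetilde U_n$ in $\mathcal{B}(\mathcal{D}_{T^*})$, with $\prod_{i=1}^n \widetilde U_i = I_{\mathcal{D}_{T^*}}$, satisfying conditions $(1)$--$(4)$ of that theorem; the first three are precisely the conditions $(1)$--$(3)$ stated here, and uniqueness is inherited directly. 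I do not foresee any serious obstacle: the one delicate point is ensuring that the Laurent-extended Berger--Coburn--Lebow isometric dilation indeed sits on the minimal unitary dilation space of $T$ rather than a larger space, but this is immediate from the $C._0$ hypothesis together with the standard identification of the minimal unitary dilation of a $C._0$ contraction as the bilateral shift on $L^2(\mathcal{D}_{T^*})$.
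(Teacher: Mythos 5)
Your proposal follows essentially the same route as the paper: the forward direction invokes Theorem \ref{coropuredil} to obtain the Toeplitz isometric dilation $V_i=T_{U_iP_i^{\perp}+zU_iP_i}$ on $H^2(\mathcal D_{T^*})$ and then passes to the Laurent multipliers $M_{U_iP_i^{\perp}+zU_iP_i}$ on $L^2(\mathcal D_{T^*})$, which is the minimal unitary dilation space of the $C._0$ contraction $T$; the converse is deduced from Theorem \ref{purediluni} exactly as in the paper. Your additional verifications of unitarity and commutativity of the Laurent extensions are details the paper leaves implicit, but the argument is the same.
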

\begin{proof}
	The existence of such a set of projections and unitaries guarantees the existence of an isometric dilation $(V_1,\dots ,V_n)$ on $H^2(\mathcal{D}_{T^*})$ of $(T_1, \dots , T_n)$ by Theorem \ref{coropuredil}. Following the proof of Theorem \ref{puredil} from in \cite{Sou:Pra}, we see that $V_i= T_{U_iP_i^{\perp}+zU_iP_i}$ for $1\leq i \leq n$. Thus, if we set $W_i:= M_{U_iP_i^{\perp}+zU_iP_i}$ on $L^2(\mathcal{D}_{T^*})$, then $(W_1,\dots ,W_n)$ is a unitary extension of $(V_1,\dots ,V_n)$ and hence is a unitary dilation of $(T_1,\dots, T_n)$. The converse part follows from Theorem \ref{purediluni}.  
	 
\end{proof}


\section{Sz. Nagy-Foias type minimal unitary dilation}

\vspace{0.4cm}

\noindent Suppose $T\in \mathcal{B}(\mathcal{H})$ is a c.n.u. contraction and $V$ on $\mathcal{K}_{0}$ is the minimal isometric dilation of $T$. By Wold decomposition there are reducing subspaces $\mathcal{K}_{01}$, $\mathcal{K}_{02}$ of $V$ such that $\mathcal{K}_0=\mathcal{K}_{01}\oplus \mathcal{K}_{02}$, $V|_{\mathcal{K}_{01}}$ is the unilateral shift and $V|_{\mathcal{K}_{02}}$ is a unitary. Then $\mathcal{K}_{01}$ can be identified with $H^2(\mathcal D_{T^*})$ and $\mathcal{K}_{02}$ can be identified with $\ov{\Delta_{T}(L^2(\mathcal{D}_T))}$, where $\Delta_{T}(t)=[I_{\mathcal D_T}-\Theta_T(e^{it})^*\Theta_T(e^{it})]^{1/2}$, where $\Theta_T$ is the characteristic function of the contraction $T$. For further details see Chapter-VI of \cite{Nagy}. Thus, $\mathcal{K}_{0}= \mathcal{K}_{01}\oplus \mathcal{K}_{02}$ can be identified with $\mathbb{K}_{+}=H^2(\mathcal D_{T^*})\oplus \overline{\Delta_{T}(L^2(\mathcal D_T))}$. Also, $V$ on $\mathcal{K}_0$ can be realized as $M_z \oplus M_{e^{it}}|_{\overline{\Delta_T(L^2(\mathcal D_T))}}$. Thus, there is a unitary
\begin{equation}\label{unitary between to spaces}
	\tau=\tau_1\oplus \tau_2:\mathcal{K}_{01}\oplus \mathcal{K}_{02}\to (H^2\otimes \mathcal{D}_{T^*})\oplus \overline{\Delta_{T}(L^2(\mathcal{D}_T))}\, :=\widetilde{\mathbb{K}}_{+}
\end{equation}
such that $V$ on $\mathcal{K}_0$ can be realized as $(M_z \otimes I_{\mathcal D_{T^*}}) \oplus M_{e^{it}}|_{\overline{\Delta_T(L^2(\mathcal D_T))}}$ on $\widetilde{\mathbb{K}}_{+}$.

If $(T_1,\ldots ,T_n)$ is a commuting tuple of contractions on $\mathcal{H}$ satisfying the hypotheses of Theorem \ref{coro-main}, then it dilates to a commtuing tuple of isometries $(V_1,\ldots ,V_n)$ on minimal isometric dilation space $\mathcal{K}_0$ of $T$. Now by Wold decomposition of commuting isometries, we have that $\mathcal{K}_{01}$ and $\mathcal{K}_{02}$ are reducing subspaces for each $V_i$ and that \begin{equation}\label{infoV_i2}
V_{i2}=V_i|_{\mathcal{K}_{02}}
\end{equation}
is a unitary for $1\leq i \leq n$. We have from \cite{Sou:Pra} the following useful analogue of Theorem \ref{coro-main} where we consider the Sz. Nagy-Foias minimal isometric dilation space of $T$.
 
\begin{thm}[\cite{Sou:Pra}, Theorem 6.1] \label{Nagy isodil}
	Let $(T_1,\ldots ,T_n)$ be a tuple of commuting contractions acting on $\mathcal{H}$ such that $T=\Pi_{i=1}^nT_i$ is a c.n.u. contraction. Suppose there are projections $P_1,\ldots  ,P_n$ and commuting unitaries $U_1,\ldots ,U_n $ in $\mathcal{B}(\mathcal{D}_T)$ satisfying
	\begin{enumerate}
		\item $D_TT_i=P_i^{\perp}U_i^*D_T+P_iU_i^*D_TT$
		\item  $P_i^{\perp}U_i^*P_j^{\perp}U_j^*=P_j^{\perp}U_j^*P_i^{\perp}U_i^*$,
		\item $U_iP_iU_jP_j=U_jP_jU_iP_i$,
		\item $D_TU_iP_iU_i^*D_T=D_{T_i}^2$,
	\end{enumerate} for $1\leq i<j\leq n$. 
	Then there are projections $Q_1,\ldots ,Q_n$ and commuting unitaries $\widetilde{U}_1,\ldots,\widetilde{U}_n$ in $\mathcal{B}(\mathcal{D}_{T^*})$ such that $(T_1, \dots , T_n)$ dilates to the tuple of commuting isometries $(\widetilde{V}_{11}\oplus \widetilde{V}_{12},\ldots ,\widetilde{V}_{n1}\oplus \widetilde{V}_{n2})$ on $\widetilde{\mathbb{K}}_+=H^2\otimes \mathcal{D}_{T^*}\oplus \ov{\Delta_{T}(L^2(\mathcal{D}_T))}$, where\begin{align*}
		\widetilde{V}_{i1}&=I\otimes\widetilde{U}_iQ_i^{\perp}+M_z\otimes \widetilde{U}_iQ_i\,,\\
		\widetilde{V}_{i2}&=\tau_2V_{i2}\tau_2^* \,,
	\end{align*} for unitaries $\tau_2$ and $V_{i2}$ as in \eqref{unitary between to spaces} and \eqref{infoV_i2} respectively for $1 \leq i \leq n$.
	 
\end{thm}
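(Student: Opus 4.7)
The strategy is to apply Theorem \ref{coro-main} to produce an isometric dilation on $\mathcal{K}_0$, then transport it to the Nagy--Foias model via the canonical unitary $\tau$ and decompose along the Wold splitting of $V$. Since conditions $(1)$--$(4)$ are exactly the hypotheses of Theorem \ref{coro-main}, that result yields a commuting isometric dilation $(V_1,\ldots,V_n)$ of $(T_1,\ldots,T_n)$ on the minimal isometric dilation space $\mathcal{K}_0$ of $T$, with the product $V=\prod_{i=1}^n V_i$ being the minimal isometric dilation of $T$.

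Next I would invoke Sz.-Nagy--Foias model theory (Chapter VI of \cite{Nagy}). Since $T$ is c.n.u., the minimal isometric dilation $V$ admits the explicit model $(M_z\otimes I_{\mathcal{D}_{T^*}})\oplus M_{e^{it}}|_{\ov{\Delta_T(L^2(\mathcal{D}_T))}}$ on $\widetilde{\mathbb{K}}_+$, implemented by the unitary $\tau=\tau_1\oplus\tau_2$ of \eqref{unitary between to spaces}, under which $\mathcal{H}\subset\mathcal{K}_0$ is identified with its image in $\widetilde{\mathbb{K}}_+$. As already recorded in the paragraph preceding the theorem, the Wold decomposition $\mathcal{K}_0=\mathcal{K}_{01}\oplus\mathcal{K}_{02}$ consists of subspaces that reduce every $V_i$, so each $V_i=V_{i1}\oplus V_{i2}$ with $V_{i2}$ unitary and $V|_{\mathcal{K}_{01}}=\prod_{i=1}^n V_{i1}$ a pure isometry. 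This reduces the task to parametrizing $(V_{11},\ldots,V_{n1})$ on the shift summand and simply naming $\widetilde{V}_{i2}:=\tau_2 V_{i2}\tau_2^*$ on the unitary summand.

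For the shift summand I would apply the Berger--Coburn--Lebow theorem (Theorem \ref{BCL}, in the form of Lemma \ref{BDF lemma O}) to the conjugated commuting tuple $(\tau_1 V_{i1}\tau_1^*)_{i=1}^n$ on $H^2\otimes\mathcal{D}_{T^*}$, whose product equals $M_z\otimes I_{\mathcal{D}_{T^*}}$. BCL natively delivers its parameters on $\mathcal{D}_{(V|_{\mathcal{K}_{01}})^*}$; but since $V|_{\mathcal{K}_{02}}$ is unitary this defect space coincides with $\mathcal{D}_{V^*}$, which in turn is canonically unitarily isomorphic to $\mathcal{D}_{T^*}$ via $D_{V^*}h\mapsto D_{T^*}h$ for $h\in\mathcal{H}$ (a standard fact for minimal isometric dilations, already exploited in the proof of Theorem \ref{mainlemma}). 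Transferring the BCL data through this isomorphism produces projections $Q_1,\ldots,Q_n$ and commuting unitaries $\widetilde{U}_1,\ldots,\widetilde{U}_n$ in $\mathcal{B}(\mathcal{D}_{T^*})$ with $\prod_{i=1}^n\widetilde{U}_i=I$ such that $\tau_1 V_{i1}\tau_1^*=I\otimes\widetilde{U}_iQ_i^{\perp}+M_z\otimes\widetilde{U}_iQ_i=\widetilde{V}_{i1}$. Reassembling, $\tau V_i\tau^*=\widetilde{V}_{i1}\oplus\widetilde{V}_{i2}$ for each $i$, and since $\tau$ fixes $\mathcal{H}$ the resulting commuting isometric tuple dilates $(T_1,\ldots,T_n)$ on $\widetilde{\mathbb{K}}_+$.

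The main obstacle I anticipate is the compatibility check in the last step: ensuring that the BCL parameters, initially living on an abstract defect space of the shift summand of $V$, can be pushed onto $\mathcal{D}_{T^*}$ in a way compatible with the embedding $\tau_1$ of the Nagy--Foias model. This is the content of the standard identification $D_{V^*}h\leftrightarrow D_{T^*}h$, after which everything else is a bookkeeping reassembly of the Wold pieces.
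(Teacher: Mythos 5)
First, a point of reference: this paper does not prove Theorem \ref{Nagy isodil} at all --- it is imported verbatim from \cite{Sou:Pra} (Theorem 6.1), so there is no in-paper proof to match your argument against; your proposal has to stand on its own.

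It does not, because of one concrete false step at the outset. You assert that Theorem \ref{coro-main} yields an isometric dilation $(V_1,\dots,V_n)$ on $\mathcal{K}_0$ \emph{with $\prod_{i=1}^n V_i$ equal to the minimal isometric dilation $V$ of $T$}, and you later use this to claim that $\prod_i \tau_1 V_{i1}\tau_1^*=M_z\otimes I_{\mathcal{D}_{T^*}}$ so that Theorem \ref{BCL} (or Lemma \ref{BDF lemma O}) applies on the shift summand. Under hypotheses $(1)$--$(4)$ alone this is false: minimality of the product is exactly what condition $(5)$ encodes, and the paper's own example following Theorem \ref{coromain} (the three diagonal rank-one projections on $\mathbb{C}^3$, whose product $T=0$ is even $C._0$) satisfies $(1)$--$(4)$, fails $(5)$, and the product of the dilating tuple is explicitly not the minimal dilation of $T$. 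Without $\prod_i V_{i1}$ being the shift, Berger--Coburn--Lebow cannot be invoked as you do: the product of the restricted tuple need not be a pure isometry at all, and even when it is, BCL delivers parameters on the defect space of that product rather than on $\mathcal{D}_{T^*}$, so you do not obtain operators $Q_i,\widetilde{U}_i\in\mathcal{B}(\mathcal{D}_{T^*})$ nor the degree-one form $I\otimes\widetilde{U}_iQ_i^{\perp}+M_z\otimes\widetilde{U}_iQ_i$ (a general isometry commuting with $M_z\otimes I$ is an arbitrary inner multiplier, e.g.\ $T_{z^2}$, not a BCL symbol). Note also that the conclusion of Theorem \ref{Nagy isodil} is consistent with this: the product $\prod_i\widetilde{V}_{i1}$ is \emph{not} claimed to be $M_z\otimes I$. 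The route that actually works --- and the one consistent with the machinery of this paper --- is to construct $Q_i$ and $\widetilde{U}_i$ directly on $\mathcal{D}_{T^*}$ from the unique solutions $G_i,G_i'$ of $D_{T^*}G_iD_{T^*}=D_{T_i'^*}^2T_i^*$ and $D_{T^*}G_i'D_{T^*}=D_{T_i^*}^2T_i'^*$ (Lemmas \ref{fundexist} and \ref{Fundamental equations}), setting $\widetilde{U}_i=G_i^*+G_i'$ and $Q_i=\frac{1}{2}(I-\widetilde{U}_i'^*\widetilde{U}_i)$ as in the proof of Theorem \ref{mainlemma}, and then to verify the isometry, commutativity and dilation properties of $(\widetilde{V}_{i1}\oplus\widetilde{V}_{i2})$ by direct computation, rather than by an appeal to BCL on the shift part. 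Your Wold-decomposition step (that $\mathcal{K}_{01},\mathcal{K}_{02}$ reduce each $V_i$ and $V_{i2}$ is unitary) and the identification $D_{V^*}h\leftrightarrow D_{T^*}h$ are fine, but they do not by themselves produce the stated form of $\widetilde{V}_{i1}$.
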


Now we present an analogue of this isometric dilation theorem in the unitary dilation setting. This is a main result of this Section.

\begin{thm} \label{thm:Nagy-Foias}
	Let $(T_1,\ldots ,T_n)$ be a tuple of commuting contractions acting on $\mathcal{H}$ such that $T=\Pi_{i=1}^nT_i$ is a c.n.u. contraction. Suppose there are projections $P_1,\ldots  ,P_n$ and commuting unitaries $U_1,\ldots ,U_n $ in $\mathcal{B}(\mathcal{D}_T)$ satisfying
	\begin{enumerate}
		\item $D_TT_i=P_i^{\perp}U_i^*D_T+P_iU_i^*D_TT$
		\item  $P_i^{\perp}U_i^*P_j^{\perp}U_j^*=P_j^{\perp}U_j^*P_i^{\perp}U_i^*$,
		\item $U_iP_iU_jP_j=U_jP_jU_iP_i$,
		\item $D_TU_iP_iU_i^*D_T=D_{T_i}^2$,
	\end{enumerate} for $1\leq i<j\leq n$. 
	Then there are projections $Q_1,\ldots ,Q_n$ and commuting unitaries $\widetilde{U}_1,\ldots,\widetilde{U}_n$ in $\mathcal{B}(\mathcal{D}_{T^*})$ such that $(T_1, \dots , T_n)$ dilates to a tuple of commuting unitaries $(\widetilde{W}_{11}\oplus \widetilde{W}_{12},\ldots ,\widetilde{W}_{n1}\oplus \widetilde{W}_{n2})$ on $\widetilde{\mathbb{K}}=L^2\otimes \mathcal{D}_{T^*}\oplus \ov{ \Delta_{T}(L^2(\mathcal{D}_T))}$, where
	\begin{align*}
		\widetilde{W}_{i1}&=I\otimes\widetilde{U}_iQ_i^{\perp}+M_z\otimes \widetilde{U}_iQ_i\,,\\
		\widetilde{W}_{i2}&=\tau_2V_{i2}\tau_2^* \,, 
	\end{align*} for unitaries $\tau_2$ and $V_{i2}$ as in \eqref{unitary between to spaces} and \eqref{infoV_i2} respectively for $1 \leq i \leq n$. 
\end{thm}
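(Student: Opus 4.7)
The natural strategy is to obtain the unitary dilation by enlarging the Nagy--Foias-type isometric dilation of Theorem \ref{Nagy isodil} in the first coordinate from $H^2\otimes \mathcal{D}_{T^*}$ to $L^2\otimes \mathcal{D}_{T^*}$, while leaving the second coordinate (which is already a unitary) untouched. I would first invoke Theorem \ref{Nagy isodil} with the data $(T_1,\dots,T_n)$ and the given projections $P_i$ and commuting unitaries $U_i$ on $\mathcal{D}_T$ to produce the projections $Q_1,\dots,Q_n$ and commuting unitaries $\widetilde{U}_1,\dots,\widetilde{U}_n$ on $\mathcal{D}_{T^*}$ together with the commuting isometric dilation $\widetilde{V}_i=\widetilde{V}_{i1}\oplus \widetilde{V}_{i2}$ on $\widetilde{\mathbb{K}}_+$. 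I would then define $\widetilde{W}_{i1}$ on $L^2\otimes \mathcal{D}_{T^*}$ to be the Laurent multiplication operator with symbol $\widetilde{U}_iQ_i^{\perp}+e^{it}\widetilde{U}_iQ_i$, and simply set $\widetilde{W}_{i2}=\widetilde{V}_{i2}=\tau_2 V_{i2}\tau_2^*$.

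Each $\widetilde{W}_{i1}$ is indeed a unitary, because its symbol factorises as $\widetilde{U}_i\bigl(Q_i^{\perp}+e^{it}Q_i\bigr)$, a product of a fixed unitary on $\mathcal{D}_{T^*}$ with a unitary-valued function on $\mathbb{T}$. For the commutativity of the full tuple I would argue as follows: the subspace $H^2\otimes \mathcal{D}_{T^*}$ is invariant under each $\widetilde{W}_{i1}$ because the symbol is analytic in $z$, and $\widetilde{W}_{i1}|_{H^2\otimes \mathcal{D}_{T^*}}=\widetilde{V}_{i1}$. Since the $\widetilde{V}_{i1}$'s commute (Theorem \ref{Nagy isodil}), Lemma \ref{BDF lemma O} forces the Berger--Coburn--Lebow commutator identities on $\{\widetilde{U}_i,Q_i\}$; these identities are precisely what is needed to make the pointwise products $(\widetilde{U}_iQ_i^{\perp}+e^{it}\widetilde{U}_iQ_i)(\widetilde{U}_jQ_j^{\perp}+e^{it}\widetilde{U}_jQ_j)$ symmetric in $i,j$ on $\mathbb{T}$, whence $\widetilde{W}_{i1}\widetilde{W}_{j1}=\widetilde{W}_{j1}\widetilde{W}_{i1}$. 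The $\widetilde{W}_{i2}$'s commute because they are the restrictions $V_i|_{\mathcal{K}_{02}}$ of the commuting isometric tuple $(V_i)$ to the reducing subspace $\mathcal{K}_{02}$, transported by $\tau_2$.

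For the dilation property, note that $\mathcal{H}$ embeds into $\widetilde{\mathbb{K}}_+ \subseteq \widetilde{\mathbb{K}}$ and $\widetilde{\mathbb{K}}_+$ is invariant under each $\widetilde{W}_i$ (since both $H^2\otimes \mathcal{D}_{T^*}$ is $\widetilde{W}_{i1}$-invariant and the second summand is common). Therefore, for any non-negative integers $k_1,\dots,k_n$,
\[
P_{\mathcal{H}}\widetilde{W}_1^{k_1}\cdots \widetilde{W}_n^{k_n}|_{\mathcal{H}}=P_{\mathcal{H}}\widetilde{V}_1^{k_1}\cdots \widetilde{V}_n^{k_n}|_{\mathcal{H}}=T_1^{k_1}\cdots T_n^{k_n},
\]
which establishes that $(\widetilde{W}_1,\dots,\widetilde{W}_n)$ is a commuting unitary dilation of $(T_1,\dots,T_n)$ on $\widetilde{\mathbb{K}}$. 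The main technical obstacle in this plan is the commutativity of the $\widetilde{W}_{i1}$: once one recognises that the BCL-type relations from Lemma \ref{BDF lemma O} that guarantee $[\widetilde{V}_{i1},\widetilde{V}_{j1}]=0$ are exactly pointwise commutator identities for the unitary-valued symbols on $\mathbb{T}$, the passage from Toeplitz operators on $H^2\otimes \mathcal{D}_{T^*}$ to Laurent operators on $L^2\otimes \mathcal{D}_{T^*}$ becomes essentially automatic, and the rest of the proof is assembly.
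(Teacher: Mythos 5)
Your proposal is correct and follows essentially the same route as the paper: invoke Theorem \ref{Nagy isodil} to get the isometric dilation on $\widetilde{\mathbb{K}}_+$, then extend the first (Toeplitz) coordinate to the corresponding Laurent operator on $L^2\otimes\mathcal{D}_{T^*}$ while keeping the already-unitary second coordinate, so that the resulting commuting unitary tuple is a unitary extension of the isometric dilation and hence a unitary dilation of $(T_1,\dots,T_n)$. The paper dispatches the commutativity and unitarity of the extended tuple with the word ``evidently,'' whereas you supply the (correct) justification via pointwise identities for the unitary-valued symbols on $\mathbb{T}$.
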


\begin{proof}
	Since there are projections $P_1,\ldots  ,P_n\in \mathcal{B}(\mathcal{D}_T)$ and commuting unitaries $U_1,\ldots ,U_n\in \mathcal{B}(\mathcal{D}_T)$ satisfying conditions $(1)-(4)$, we have by Theorem \ref{Nagy isodil} that $(T_1, \dots , T_n)$ has an isometric dilation $(\widetilde{V}_{11}\oplus \widetilde{V}_{12},\ldots ,\widetilde{V}_{n1}\oplus \widetilde{V}_{n2})$ on $\widetilde{\mathbb{K}}_{+}=H^2\otimes \mathcal{D}_{T^*}\oplus \ov{\Delta_{T}(L^2(\mathcal{D}_T))}$, where
	\begin{align*}
		\widetilde{V}_{i1}&=I\otimes\widetilde{U}_iQ_i^{\perp}+M_z\otimes \widetilde{U}_iQ_i\,, \\
		\widetilde{V}_{i2}&=\tau_2V_{i2}\tau_2^*, \quad (1\leq i \leq n) 
	\end{align*}
	for a unitary $\tau_2:\mathcal{K}_{02}\to \ov{\Delta_{T}(L^2(\mathcal{D}_T))}$ and a unitary $V_{i2}$ on $\mathcal{K}_{02}$ as in \eqref{unitary between to spaces} and \eqref{infoV_i2} respectively.
Let us consider for $i=1, \dots , n$ the following operators on $\widetilde{\mathbb{K}}=L^2\otimes \mathcal{D}_{T^*}\oplus \ov{\Delta_{T}(L^2(\mathcal{D}_T))}$:
\begin{align*}
	\widetilde{W}_{i1}&=I\otimes\widetilde{U}_iQ_i^{\perp}+M_z\otimes \widetilde{U}_iQ_i\,,\\
\widetilde{W}_{i2}&=\tau_2V_{i2}\tau_2^*.
\end{align*}
Evidently, $(\widetilde{W}_{11}\oplus \widetilde{W}_{12}, \dots , \widetilde{W}_{n1}\oplus \widetilde{W}_{n2})$ is a unitary extension of $(\widetilde{V}_{11}\oplus \widetilde{V}_{12}, \dots , \widetilde{V}_{n1}\oplus \widetilde{V}_{n2})$ and consequently $(\widetilde{W}_{11}\oplus \widetilde{W}_{12}, \dots , \widetilde{W}_{n1}\oplus \widetilde{W}_{n2})$ is a unitary dilation of $(T_1, \dots , T_n)$.

\end{proof}

We conclude this paper with a weaker version of Theorem \ref{Unimain} that assumes conditions $(1)-(4)$ of Theorem \ref{Unimain} for having a unitary dilation of $(T_1, \dots , T_n)$. We have already seen in Theorem \ref{thm:Nagy-Foias} that a c.n.u. tuple of commuting contractions $(T_1, \dots , T_n)$ dilates to commuting unitaries on the minimal unitary dilation space of $T=\prod_{i=1}^n T_i$ with these four conditions. However, because of such weaker hypotheses we will not have a proper converse part.

\begin{thm}\label{coromain}
	Let $T_1,\ldots, T_n \in \mathcal{B}(\mathcal{H})$ be commuting contractions, $T_i'=\prod_{i\neq j} T_j$ for all $1\leq i \leq n$ and $T=\prod_{i=1}^nT_i$. Then $(T_1,\ldots ,T_n)$ possesses a unitary dilation on the minimal unitary dilation space of $T$, if there are projections $P_1,\ldots ,P_n$ and commuting unitaries $U_1,\ldots ,U_n$ in $\mathcal{B}(\mathcal{D}_T)$ such that the following hold for $i=1, \dots, n$:
	\begin{enumerate} 
		\item $D_TT_i=P_i^{\perp}U_i^*D_T+P_iU_i^*D_TT$ ,
		\item  $P_i^{\perp}U_i^*P_j^{\perp}U_j^*=P_j^{\perp}U_j^*P_i^{\perp}U_i^*$ ,
		\item $U_iP_iU_jP_j=U_jP_jU_iP_i$ ,
		\item $D_TU_iP_iU_i^*D_T=D_{T_i}^2$.
	\end{enumerate} 
	Conversely, if $(T_1,\ldots ,T_n)$ possesses a unitary dilation $(\widehat{W}_1,\ldots,\widehat{W}_{n})$ with $W=\prod_{i=1}^nW_i$ being the minimal unitary dilation of $T$, then there are unique projections $P_1,\ldots ,P_n$ and unique commuting unitaries $U_1,\ldots ,U_n$ in $\mathcal{B}(\mathcal{D}_T)$ satisfying the conditions $(1)-(4)$ above.
\end{thm}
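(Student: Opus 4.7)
The plan is to reduce Theorem \ref{coromain} directly to Theorem \ref{Unimain}, using the uniqueness assertions in Theorems \ref{coro-main} and \ref{main} to upgrade the hypothesis. The key observation is that, in the presence of conditions (1)--(4), the additional requirements of Theorem \ref{Unimain} (namely $\prod_{i=1}^n U_i = I_{\mathcal{D}_T}$ and condition (5)) come for free, so no fresh construction is needed.

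For the forward direction, I would start with projections $P_1,\dots,P_n$ and commuting unitaries $U_1,\dots,U_n$ on $\mathcal{D}_T$ satisfying (1)--(4). Theorem \ref{coro-main} immediately supplies an isometric dilation $(V_1,\dots,V_n)$ of $(T_1,\dots,T_n)$ on the minimal isometric dilation space $\mathcal{K}$ of $T$, with $V=\prod_{i=1}^n V_i$ the minimal isometric dilation of $T$. Feeding this isometric dilation into Theorem \ref{main} produces unique projections $\widetilde{P}_i$ and unique commuting unitaries $\widetilde{U}_i$ with $\prod_i \widetilde{U}_i = I_{\mathcal{D}_T}$ satisfying all of (1)--(5); in particular they satisfy (1)--(4). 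The uniqueness clause in the converse half of Theorem \ref{coro-main} then forces $\widetilde{P}_i = P_i$ and $\widetilde{U}_i = U_i$ for every $i$, and consequently the original $P_i,U_i$ automatically satisfy $\prod_i U_i = I_{\mathcal{D}_T}$ and condition (5). Theorem \ref{Unimain} will then apply verbatim and deliver a unitary dilation $(W_1,\dots,W_n)$ of $(T_1,\dots,T_n)$ on the minimal unitary dilation space of $T$, in fact with $\prod_i W_i$ being the minimal unitary dilation of $T$, which is stronger than what Theorem \ref{coromain} demands.

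For the converse, suppose $(T_1,\dots,T_n)$ admits a unitary dilation $(\widehat{W}_1,\dots,\widehat{W}_n)$ with $W=\prod_i \widehat{W}_i$ the minimal unitary dilation of $T$. The converse half of Theorem \ref{Unimain} already supplies unique projections $P_1,\dots,P_n$ and unique commuting unitaries $U_1,\dots,U_n$ on $\mathcal{D}_T$ with $\prod_i U_i = I$ that satisfy (1)--(5), so in particular (1)--(4). Uniqueness subject only to (1)--(4) can be read off from the converse part of Theorem \ref{coro-main}, applied to the isometric tuple $V_i := \widehat{W}_i|_{\mathcal{H}\oplus l^2(\mathcal{D}_T)}$ extracted via Lemma \ref{forunidil} exactly as in the converse proof of Theorem \ref{Unimain}. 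The main (and essentially only) subtlety is this uniqueness gymnastics: once one sees that Theorems \ref{coro-main} and \ref{main} together force the apparently weaker hypothesis (1)--(4) to coincide with the full hypothesis (1)--(5) plus $\prod_i U_i = I_{\mathcal{D}_T}$, Theorem \ref{coromain} becomes a genuine corollary of Theorems \ref{coro-main}, \ref{main} and \ref{Unimain} and requires no independent construction.
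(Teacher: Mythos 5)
Your forward direction contains a genuine gap, and the conclusion you are trying to force through is actually false. The error is in the sentence claiming that Theorem \ref{coro-main} supplies an isometric dilation $(V_1,\dots,V_n)$ \emph{with $V=\prod_{i=1}^n V_i$ the minimal isometric dilation of $T$}. The existence half of Theorem \ref{coro-main} only produces an isometric dilation on the minimal isometric dilation space; it does not assert that the product of the dilating isometries is the minimal isometric dilation of $T$ — that extra property is exactly what condition (5) buys in Theorem \ref{main}. Without it, the converse half of Theorem \ref{main} cannot be invoked, so you cannot manufacture the tuple $(\widetilde{P}_i,\widetilde{U}_i)$ satisfying (1)--(5), and the uniqueness transfer collapses. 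Indeed, conditions (1)--(4) do \emph{not} imply condition (5): the paper's closing example, $T_1=\mathrm{diag}(1,0,0)$, $T_2=\mathrm{diag}(0,1,0)$, $T_3=\mathrm{diag}(0,0,1)$ on $\mathbb{C}^3$, satisfies (1)--(4) but fails (5), and the resulting unitary dilation has $\prod_i W_i$ \emph{not} equal to the minimal unitary dilation of $T_1T_2T_3$. The paper also states explicitly that the class of tuples satisfying (1)--(4) is strictly larger than the class satisfying (1)--(5), which contradicts the premise of your reduction.

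The paper's actual proof of the forward direction goes a different route entirely: it takes the canonical decomposition $T=U\oplus\widetilde{T}$ into a unitary part on $\HS_1$ and a c.n.u.\ part on $\HS_2$, uses the fact that $\HS_1,\HS_2$ reduce every $T_i$ with $(T_1|_{\HS_1},\dots,T_n|_{\HS_1})$ already a commuting unitary tuple, applies the Sz.\ Nagy--Foias-type construction of Theorem \ref{thm:Nagy-Foias} (which needs only (1)--(4)) to the c.n.u.\ tuple $(T_1|_{\HS_2},\dots,T_n|_{\HS_2})$, and then assembles the direct sum on a space identified with $l^2(\mathcal{D}_T)\oplus\HS\oplus l^2(\mathcal{D}_{T^*})$. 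Your converse direction is essentially fine (and matches the paper's appeal to Theorem \ref{Unimain}, with a reasonable gloss on why uniqueness subject to (1)--(4) alone holds), but the forward direction needs to be replaced by a genuine construction along these lines rather than a reduction to Theorem \ref{Unimain}.
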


\begin{proof}

Let $(T_1, \dots , T_n)$ satisfy the conditions $(1)-(4)$. Suppose $T=U \oplus \widetilde{T}$ is the canonical decomposition of $T$ with respect to $\HS =\HS_1 \oplus \HS_2$, where $\HS_1$ is the maximal reducing subspace of $T$ such that $U=T|_{\HS_1}$ is a unitary and $\widetilde{T}=T|_{\HS_2}$ is a c.n.u. contraction. Then, it is well-known (e.g. see Lemma 2.2 in \cite{Esch-1} or Theorem 3.7 in \cite{S.Pal2}) that $\HS_1, \HS_2$ are common reducing subspaces for $T_1, \dots , T_n$ and $(T_1|_{\HS_1}, \dots , T_n|_{\HS_1})$ is a tuple of commuting unitaries, whereas $(T_1|_{\HS_2}, \dots , T_n|_{\HS_2})$ is a c.n.u. tuple, i.e. a tuple of commuting contractions whose product is a c.n.u. contraction. Since $D_T\equiv D_{\widetilde{T}}$ and $\mathcal D_{\widetilde{T}} \subseteq \HS_2$, we have that the projections $P_1, \dots , P_n$ and commuting unitaries $U_1, \dots , U_n$ satisfy analogues of conditions $(1)\; \& \; (4)$ respectively with $D_T$ and $T_i$ being replaced by $D_{\widetilde{T}}$ and $T_i|_{\HS_2}$ respectively. Now, Theorem \ref{thm:Nagy-Foias} tells us that the c.n.u. tuple $(T_1|_{\HS_2}, \dots , T_n|_{\HS_2})$ can be dilated to commuting unitaries say $(W_1, \dots , W_n)$ on the Sz. Nagy-Foias minimal unitary dilation space for $\widetilde{T}$. It is merely mentioned that there is a unitary $\tau$ from the Sz. Nagy-Foias minimal unitary space to the Sch$\ddot{a}$ffer's minimal unitary dilation space $l^2(\mathcal D_{\widetilde{T}}) \oplus \HS_2 \oplus l^2(\mathcal D_{\widetilde{T^*}})$ of ${\widetilde{T}}$. Clearly $l^2(\mathcal D_{\widetilde{T}}) \oplus \HS_2 \oplus l^2(\mathcal D_{\widetilde{T^*}})$ can be identified with $l^2(\mathcal D_{{T}}) \oplus \HS_2 \oplus l^2(\mathcal D_{{T^*}})$. Thus, $(W_1, \dots , W_n)$ can be identified with a tuple of commuting unitaries say $(\widetilde{W}_1, \dots , \widetilde{W}_n)$ on $l^2(\mathcal D_{{T}}) \oplus l^2(\mathcal D_{{T^*}}) \oplus \HS_2$ dilating $(T_1|_{\HS_2}, \dots , T_n|_{\HS_2})$. It is evident that $(\widetilde{W}_1 \oplus T_1|_{\HS_1}, \dots , \widetilde{W}_n \oplus T_n|_{\HS_1})$ is a unitary dilation of $(T_1, \dots , T_n)$ on the minimal unitary dilation space $l^2(\mathcal D_{{T}}) \oplus l^2(\mathcal D_{{T^*}}) \oplus \HS_2 \oplus \HS_1 \equiv l^2(\mathcal D_{{T}})  \oplus \HS \oplus l^2(\mathcal D_{{T^*}})$ of $T$. The converse part follows from Theorem \ref{Unimain}.

\end{proof}

Note that the class of commuting contractions that dilate to commuting unitaries by satisfying four conditions of Theorem \ref{coromain} is strictly larger than the class satisfying the five conditions of Theorem \ref{Unimain}, though they are being dilated to the same space. In this context we would like to recall Example 5.4 from \cite{Sou:Pra}. Indeed, if we consider
\[
T_1=\begin{bmatrix}
		1&0&0\\0&0&0\\0&0&0
	\end{bmatrix},\; 
	T_2=\begin{bmatrix}
		0&0&0\\0&1&0\\0&0&0
	\end{bmatrix} \text{ and }
	T_3=\begin{bmatrix}
		0&0&0\\0&0&0\\0&0&1
	\end{bmatrix}
	\text{ on } \mathbb{C}^3,
	\]
	we see that the commuting triple $(T_1, T_2 , T_3)$ satisfies conditions $(1)-(4)$ but fails to meet condition-$(5)$ of Theorem \ref{Unimain}. Thus, this triple possesses a unitary dilation $(U_1, U_2,U_3)$ on the minimal unitary dilation space of $T_1T_2T_3$ but the product $U_1U_2U_3$ is not the minimal unitary dilation of $T_1T_2T_3$.
	
	\vspace{0.4cm}
	
	\section{Appendix}
	
	\vspace{0.2cm}
	
	\noindent \textbf{Proof of (\ref{eqn:new-01}).} We have that 
\[
	D_TF_iD_T+D_{T^*}G_i'D_{T^*}T = D_{T_i'}^2T_i+D_{T_i^*}^2T_i'^*T =T_i-T_i'^*T_i'T_i+T_i'^*T-T_iT_i^*T_i'^*T=T_iD_{T}^2.
\]
Applying the relation $TD_T=D_{T^*}T$ we have $ D_TF_iD_T+D_{T^*}G_i'TD_{T}=T_iD_T^2 .$ Thus, we have
$
D_TF_i=T_iD_T-D_{T^*}G_i'T|_{\mathcal{D}_T}.
$ 
Similarly, we can have
$
D_TF_i'D_T+D_{T^*}G_iTD_{T}=T_i'D_T^2,
$ 
which implies $
D_TF_i'=T_i'D_T-D_{T^*}G_iT|_{\mathcal{D}_T}.
$ Now $D_TF_i'F_i'^*D_T=D_{T_i}^2$ leads to the following:
 \begin{align*}
	& (T_i'D_{T}-D_{T^*}G_iT)(D_TT_i'^*-T^*G_i^*D_{T^*})-D_{T_i}^2=0\\
	\implies& T_i'D_{T}^2T_i'^*-T_i'T^*D_{T^*}G_i^*D_{T^*}-D_{T^*}G_iD_{T^*}TT_i'^*+D_{T^*}G_iTT^*G_i^*D_{T^*}-D_{T_i}^2 =0 \\  &\qquad \qquad [\text{ by } T^*D_{T^*}=D_{T}T^*,\, TD_T=D_{T^*}T ]\\
	\implies & T_i'T_i'^*-T_i'T^*TT_i'^*-T_i'T^*D_{T^*}G_i^*D_{T^*}-T_i^*TT_i'^*+T_i'T^*TT_i'^* \\
	& \quad +D_TG_iTT^*G_i^*D_{T^*}-I+T_i^*T_i=0 \quad [\text{ since } D_{T^*}G_iD_{T^*}=D_{T_i'^{*}}^2T_i^*=T_i^*-T_i'T^*]\\
	\implies & D_TG_iTT^*G_i^*D_{T^*}+T_i^*T_i-T_i^*TT_i'^*-T_i'T^*D_{T^*}G_i^*D_{T^*}=I-T_i'T_i'^*\\
	\implies&D_{T^*}G_iTT^*G_i^*D_{T^*}+T_i^*(T_i-TT_i'^*)-T_i'T^*D_{T^*}G_i^*D_{T^*}=D_{T_i'^*}^2\\
	\implies&D_{T^*}G_iTT^*G_i^*D_{T^*}+T_i^*D_{T^*}G_i^*D_{T^*}-T_i'T^*D_{T^*}G_i^*D_{T^*}=D_{T_i'^*}^2\\
	\implies & D_{T^*}G_iTT^*G_i^*D_{T^*}+(T_i^*-T_i'T^*)D_{T^*}G_i^*D_{T^*}=D_{T_i'^*}^2\\
	\implies &D_{T^*}G_iTT^*G_i^*D_{T^*}+D_{T^*}G_iD_{T^*}D_{T^*}G_i^*D_{T^*}=D_{T_i'^*}^2 \\
	\implies &D_{T^*}G_iG_i^*D_{T^*}=D_{T_i'^*}^2.
\end{align*}
Similarly, from $D_TF_iF_i^*D_T=D_{T_i'}^2$ and $D_TF_i=T_iD_T-D_{T^*}G_i'T|_{\mathcal{D}_T}$ we obtain
\begin{equation} \label{eqn:0f3}
D_{T^*}G_i'G_i'^*D_{T^*}=D_{T_i^*}^2.
\end{equation}
As a consequence we have the following. 
\begin{align*}
	D_{T^*}G_iG_i'D_{T^*} =(T_i^*D_{T^*}-D_TF_i'T^*)G_i'D_{T^*}
	&=T_i^*D_{T^*}G_i'D_{T^*}-D_TF_i'F_i'^*D_TT^*\\
	&=T_i^*(T_i'^*-T_iT^*)-D_TF_i'F_i'^*D_TT^*\\
	&=T^*-T_i^*T_iT^*-D_TF_i'F_i'^*D_TT^*\\
	&=D_{T_i}^2T^*-D_TF_i'F_i'^*D_TT^* \\& =0.
\end{align*}
Therefore, $G_iG_i'=0$. Similarly we can prove that $G_i'G_i=0$. Again note that 
\begin{align*}
	D_{T^*}(G_iG_i^*+G_i'^*G_i')D_{T^*}	&=D_{T^*}G_iG_i^*D_{T^*}+D_{T^*}G_i'^*TT^*G_i'D_{T^*}+D_{T^*}G_i'^*D_{T^*}D_{T^*}G_i'D_{T^*}\\
	&=D_{T_i'^*}^2+D_{T^*}TF_i'F_i'^*T^*D_{T^*}+(T_i'-TT_i^*)(T_i'^*-T_iT^*)\\
	&=I-T_i'T_i'^*+TD_{T_i}^2T^*+T_i'T_i'^*-TT^*-TT^*+TT_i^*T_iT^*\\
	&=I+TT^*-TT_i^*T_iT^*-2TT^*+TT_i^*T_iT^*\\
	&=D_{T^*}^2.
\end{align*} 
Similarly, we can prove that $D_{T^*}(G_i^*G_i+G_i'G_i'^*)D_{T^*}=D_{T^*}^2$.  So, we obtain
\[
G_i^*G_i+G_i'G_i'^*=I=G_iG_i^*+G_i'^*G_i'.
\]

\vspace{0.2cm}

\noindent \textbf{Proof of (\ref{claim1}).}	Note that we already have $D_{V^*}V^kh=0$ for all $h\in \mathcal{H}$, $k\in \mathbb{N}$. Therefore, for all $h\in \mathcal{H}$ and $k\in \mathbb{N}$ we have 
\[
	D_{V_i'^*}^2V_i^*V^kh = (I-V_i'V_i'^*)V_i'V^{k-1}h= (V_i'-V_i'V_i'^*V_i')V^{k-1}h=0.
\]
Thus, \eqref{claim1} holds for all vectors in $\overline{span}\{V^kh:k\in \mathbb{N},\;h\in \mathcal{H}  \}$. As $V$ is an isometry, $D_{V^*}^2=D_{V^*}$ and thus \eqref{foruniqueness1} tells us that $ D_{V_i'^*}^2V_i^*=0$ on $\mathcal{N}(\mathcal{D}_T)$ and thus
$
	D_{V_i'^*}^2V_i^*=D_{V^*}D_{V_i'^*}^2V_i^*D_{V^*}.
$ For $h,h'\in \mathcal{H}$, we have that
\begin{align*}
	\langle D_{V^*}XG_iX^*D_{V^*}D_{V^*}h,D_{V^*}h' \rangle = \langle D_{V^*}^2XG_iX^*D_{V^*}^2h,h' \rangle & = \langle D_{V^*}XG_iX^*D_{V^*}h,h' \rangle \\
	&=\langle G_iX^*D_{V^*}h,X^*D_{V^*}h' \rangle\\
	&=\langle G_iD_{T^*}h,D_{T^*}h' \rangle \\
	& \hspace{20mm} [ \text{ since } XD_{T^*}=D_{V^*} \; \& \; X^*X=I]\\
	&=\langle D_{T^*}G_iD_{T^*}h,h' \rangle \\
	&=\langle (T_i^*-T_i'T^*)h,h' \rangle\\
    &=\langle V_i^*h,h' \rangle -\langle V^*h,V_i'^*h' \rangle\\
    &=\langle (V_i^*-V_i'V^*)h,h' \rangle \\
    &=\langle D_{V_i'^*}^2V_i^*h,h' \rangle\\
    &=\langle D_{V^*}D_{V_i'^*}^2V_i^*D_{V^*}h,h' \rangle\\
    &=\langle D_{V_i'^*}^2V_i^*D_{V^*}h,D_{V^*}h' \rangle.
\end{align*}
Hence, the first identity in \eqref{claim1} holds. Similarly, one can prove the other identity.\\

\noindent \textbf{Proof of (\ref{eqn:new-002}).}  For proving $(ii)$ let us first observe the following: \begin{align*}
V_iD_j+D_iE_j=&\begin{bmatrix}
	T_i&0&0&0&\cdots\\
	P_iU_i^*D_T&P_i^{\perp}U_i^*&0&0&\cdots\\ 
	0 &P_iU_i^*&P_i^{\perp}U_i^*&0&\cdots\\
	0 &0 &P_iU_i^*&P_i^{\perp}U_i^*&\cdots\\
	\vdots&\vdots&\vdots&\vdots&\ddots
\end{bmatrix}
\begin{bmatrix}
	D_{T^*}\widetilde{U}_jQ_j&0&0&\cdots\\
	-P_jU_j^*T^*&0&0&\cdots\\
	0&0&0&\cdots\\
	\vdots&\vdots&\vdots&\ddots
\end{bmatrix}\\&+\begin{bmatrix}
D_{T^*}\widetilde{U}_iQ_i&0&0&\cdots\\
-P_iU_i^*T^*&0&0&\cdots\\
0&0&0&\cdots\\
\vdots&\vdots&\vdots&\ddots
\end{bmatrix} \begin{bmatrix}
	\widetilde{U}_jQ_j^{\perp}&\widetilde{U}_jQ_j&0&\cdots\\
	0&\widetilde{U}_jQ_j^{\perp}&\widetilde{U}_jQ_j&\cdots\\
	0&0&\widetilde{U}_jQ_j^{\perp}&\cdots\\
	\vdots&\vdots&\vdots&\ddots
\end{bmatrix}\\
=&\begin{bmatrix}
	T_iD_{T^*}\widetilde{U}_jQ_j+D_{T^*}\widetilde{U}_iQ_i\widetilde{U}_jQ_j^{\perp}&D_{T^*}\widetilde{U}_iQ_i\widetilde{U}_jQ_j&0&\ldots\\
	P_iU_i^*D_TD_{T^*}\widetilde{U}_jQ_j-P_i^{\perp}U_i^*P_jU_j^*T^*-P_iU_i^*T^*\widetilde{U}_jQ_j^{\perp}&P_iU_i^*T^*\widetilde{U}_jQ_j&0&\ldots\\
	-P_iU_i^*P_jU_j^*T^*&0&0&\ldots\\
	0&0&0&\ldots\\
	\vdots&\vdots&\vdots&\ddots
\end{bmatrix} 
\end{align*} and 
\begin{align*}
V_jD_i+D_jE_i=&\begin{bmatrix}
	T_j&0&0&0&\cdots\\
	P_jU_j^*D_T&P_j^{\perp}U_j^*&0&0&\cdots\\ 
	0 &P_jU_j^*&P_j^{\perp}U_j^*&0&\cdots\\
	0 &0 &P_jU_j^*&P_j^{\perp}U_j^*&\cdots\\
	\vdots&\vdots&\vdots&\vdots&\ddots
\end{bmatrix}
\begin{bmatrix}
	D_{T^*}\widetilde{U}_iQ_i&0&0&\cdots\\
	-P_iU_i^*T^*&0&0&\cdots\\
	0&0&0&\cdots\\
	\vdots&\vdots&\vdots&\ddots
\end{bmatrix}\\&+\begin{bmatrix}
	D_{T^*}\widetilde{U}_jQ_j&0&0&\cdots\\
	-P_jU_j^*T^*&0&0&\cdots\\
	0&0&0&\cdots\\
	\vdots&\vdots&\vdots&\ddots
\end{bmatrix} \begin{bmatrix}
	\widetilde{U}_iQ_i^{\perp}&\widetilde{U}_iQ_i&0&\cdots\\
	0&\widetilde{U}_iQ_i^{\perp}&\widetilde{U}_iQ_i&\cdots\\
	0&0&\widetilde{U}_iQ_i^{\perp}&\cdots\\
	\vdots&\vdots&\vdots&\ddots
\end{bmatrix}\\
=&\begin{bmatrix}
	T_jD_{T^*}\widetilde{U}_iQ_i+D_{T^*}\widetilde{U}_jQ_j\widetilde{U}_iQ_i^{\perp}&D_{T^*}\widetilde{U}_jQ_j\widetilde{U}_iQ_i&0&\ldots\\
	P_jU_j^*D_TD_{T^*}\widetilde{U}_iQ_i-P_j^{\perp}U_j^*P_iU_i^*T^*-P_jU_j^*T^*\widetilde{U}_iQ_i^{\perp}&P_jU_j^*T^*\widetilde{U}_iQ_i&0&\ldots\\
	-P_jU_j^*P_iU_i^*T^*&0&0&\ldots\\
	0&0&0&\ldots\\
	\vdots&\vdots&\vdots&\ddots
\end{bmatrix} .
\end{align*}
Clearly $P_iU_i^*P_jU_j^*T^*=P_jU_j^*P_iU_i^*T^*$ and $D_{T^*}\widetilde{U}_iQ_i\widetilde{U}_jQ_j=D_{T^*}\widetilde{U}_jQ_j\widetilde{U}_iQ_i$ follow from $(3)$ and $(3)'$ respectively. Next observe that
	\[
		D_{T^*}TU_iP_iD_{T}=TD_TU_iP_iD_T=T(T_i'-T_i^*T)
		=(T_i'-TT_i^*)T=D_{T^*}Q_i\widetilde{U}_i^*D_{T^*}T=D_{T^*}Q_i\widetilde{U}_i^*TD_{T}.
	\]
	Since both $TU_iP_i$ and $Q_i\widetilde{U}_i^*T$ map $\mathcal{D}_T$ into $\mathcal{D}_{T^*}$, we have that \begin{equation}\label{relbetUiPiQiUitilde}
	TU_iP_i=Q_i\widetilde{U}_i^*T|_{\mathcal{D}_T}.
\end{equation}
Therefore, we have
\[
P_jU_j^*T^*\widetilde{U}_iQ_i=T^*\widetilde{U}_jQ_j\widetilde{U}_iQ_i=T^*\widetilde{U}_iQ_i\widetilde{U}_jQ_j=P_iU_i^*T^*\widetilde{U}_jQ_j
\]
So, for proving $V_iD_j+D_iE_j=V_jD_i+D_jE_i$, it suffices to show 
	\begin{enumerate}
	\item[(a)]	$T_iD_{T^*}\widetilde{U}_jQ_j+D_{T^*}\widetilde{U_i}Q_i\widetilde{U}_{j}Q_j^{\perp}=T_jD_{T^*}\widetilde{U_i}Q_i+D_{T^*}\widetilde{U}_j Q_j\widetilde{U}_{i}Q_i^{\perp} $,
	\item[(b)]$	P_iU_i^*D_TD_{T^*}\widetilde{U}_jQ_j-P_i^{\perp}U_i^*P_jU_j^*T^*-P_iU_i^*T^*\widetilde{U}_jQ_j^{\perp}=P_jU_j^*D_TD_{T^*}\widetilde{U}_iQ_i-P_j^{\perp}U_j^*P_iU_i^*T^*-P_jU_j^*T^*\widetilde{U}_iQ_i^{\perp}
	$.
	\end{enumerate}  For proving $(a)$ we first show that 
\begin{equation}\label{F_iF_j'+F_jF_i'}
	\widetilde{U}_iQ_i^{\perp}\widetilde{U}_jQ_j+\widetilde{U}_iQ_i\widetilde{U}_{j}Q_j^{\perp}=\widetilde{U}_jQ_j^{\perp}\widetilde{U}_iQ_i+\widetilde{U}_jQ_j\widetilde{U}_iQ_i^{\perp}.
\end{equation}
Note that
\begin{align*}
\widetilde{U}_iQ_i^{\perp}\widetilde{U}_jQ_j+\widetilde{U}_iQ_i\widetilde{U}_{j}Q_j^{\perp}&=  \widetilde{U}_i\widetilde{U}_jQ_j-\widetilde{U}_iQ_i\widetilde{U}_jQ_j+\widetilde{U}_iQ_i\widetilde{U}_{j}-\widetilde{U_i}Q_i\widetilde{U}_{j}Q_j\\
&=\widetilde{U}_i\widetilde{U}_j(Q_j+\widetilde{U}_j^*Q_i\widetilde{U}_{j})-2\widetilde{U_i}Q_i\widetilde{U}_{j}Q_j\\
&=\widetilde{U}_j\widetilde{U}_i(Q_i+\widetilde{U}_i^*Q_j\widetilde{U}_{i})-2\widetilde{U_i}Q_i\widetilde{U}_{j}Q_j\hspace{1cm} [\text{From }\eqref{key condition}. ]\\
&=\widetilde{U}_j\widetilde{U}_iQ_i+\widetilde{U}_jQ_j\widetilde{U}_{i} -\widetilde{U}_jQ_j\widetilde{U}_iQ_i-\widetilde{U}_jQ_j\widetilde{U}_{i}Q_i\\
&=\widetilde{U}_jQ_j^{\perp}\widetilde{U}_iQ_i+\widetilde{U}_jQ_j\widetilde{U}_iQ_i^{\perp}.
\end{align*}
 Now we prove $(a)$ using conditions-$(1'), (3')$ and \eqref{F_iF_j'+F_jF_i'} in the following way.
	\begin{align*}
		&T_iD_{T^*}\widetilde{U}_jQ_j+D_{T^*}\widetilde{U}_iQ_i\widetilde{U}_{j}Q_j^{\perp}\\
		=& D_{T^*}\widetilde{U}_iQ_i^{\perp}\widetilde{U}_jQ_j+TD_{T^*}\widetilde{U}_iQ_i\widetilde{U}_jQ_j+D_{T^*}\widetilde{U}_iQ_i\widetilde{U}_{j}Q_j^{\perp}\hspace{1cm} [\text{by condition}-(1')]\\
		=&D_{T^*}(\widetilde{U}_iQ_i^{\perp}\widetilde{U}_jQ_j+\widetilde{U}_iQ_i\widetilde{U}_{j}Q_j^{\perp})+TD_{T^*}\widetilde{U}_iQ_i\widetilde{U}_jQ_j\\
		=&D_{T^*}(\widetilde{U}_jQ_j^{\perp}\widetilde{U}_iQ_i+\widetilde{U}_jQ_j\widetilde{U}_iQ_i^{\perp})+TD_{T^*}\widetilde{U}_j Q_j\widetilde{U}_iQ_i\hspace{1cm} [\text{by }\eqref{F_iF_j'+F_jF_i'} \text{ and condition }-(3')]\\
		=&D_{T^*}\widetilde{U}_jQ_j^{\perp}\widetilde{U}_iQ_i+TD_{T^*}\widetilde{U}_jQ_j\widetilde{U}_iQ_i+D_{T^*}\widetilde{U}_jQ_j\widetilde{U}_iQ_i^{\perp}\\
		=&(D_{T^*}\widetilde{U}_jQ_j^{\perp}+TD_{T^*}\widetilde{U}_j Q_j)\widetilde{U}_iQ_i+D_{T^*}\widetilde{U}_jQ_j\widetilde{U}_iQ_i^{\perp}\\
		=&T_jD_{T^*}\widetilde{U}_i Q_i+D_{T^*}\widetilde{U}_jQ_j\widetilde{U}_{i}Q_i^{\perp}.\hspace{1cm} [\text{by condition}-(1')]
	\end{align*}
	This proves $(a)$. Before proving $(b)$ note that by an argument similar to that in \eqref{F_iF_j'+F_jF_i'}, we can have
\begin{equation} \label{F_i^*F_j'2}
P_iU_i^*P_j^{\perp}U_j^*+P_i^{\perp}U_i^*P_jU_j^*=P_jU_j^*P_i^{\perp}U_i^*+P_j^{\perp}U_j^*P_iU_i^*.
\end{equation} 
Using \eqref{DTUIPiDT}, \eqref{DT*UitildeQiDT*} we have that 
 \begin{align*}
	D_TU_iP_iD_T+D_{T^*}Q_i^{\perp}\widetilde{U}_i^*TD_{T}&=D_{T_i}^2T_i'+D_{T_i'^*}T_i^*T\\
	&=T_i'-T_i^*T+T_i^*T-T_i'T^*T\\
	&=T_i'D_T^2.	
\end{align*}
Hence,
\begin{equation}\label{D_TU_iP_i}
D_TU_iP_i=T_i'D_T-D_{T^*}Q_i^{\perp}\widetilde{U}_i^*T|_{\mathcal{D}_T}.
\end{equation}
Further we observe that  
\begin{align*}
	&(P_iU_i^*D_TD_{T^*}-P_i^{\perp}U_i^*T^*-D_TD_{T^*}\widetilde{U}_iQ_i+T^*\widetilde{U}_iQ_i^{\perp})D_{T^*}\\
	=& P_iU_i^*D_TD_{T^*}^2-P_i^{\perp}U_i^*T^*D_{T^*}-D_TD_{T^*}\widetilde{U}_iQ_iD_{T^*}+T^*\widetilde{U}_iQ_i^{\perp}D_{T^*}\\
	=& P_iU_i^*D_T-P_iU_i^*D_TTT^*-P_i^{\perp}U_i^*D_{T}T^*-D_TD_{T^*}\widetilde{U}_iQ_iD_{T^*}+T^*\widetilde{U}_iQ_i^{\perp}D_{T^*}\hspace{0.5cm}[\text{From  }T^*D_{T^*}=D_TT^*]\\
	=& P_iU_i^*D_T-(P_iU_i^*D_TT+P_i^{\perp}U_i^*D_{T})T^*-D_TD_{T^*}\widetilde{U}_iQ_iD_{T^*}+T^*\widetilde{U}_iQ_i^{\perp}D_{T^*}\\
	=& P_iU_i^*D_T-D_TT_iT^*-D_TD_{T_i^*}^2T_i'^*+T^*\widetilde{U}_iQ_i^{\perp}D_{T^*}\hspace{1cm}[\text{From condition-} (1) \text{ and  }\eqref{DT*UitildeQiDT*}] \\
	=& P_iU_i^*D_T-D_T(T_iT^*+T_i'^*-T_iT_i^*T_i'^*)+T^*\widetilde{U}_iQ_i^{\perp}D_{T^*}\\
	=& P_iU_i^*D_T-D_TT_i'^*+T^*\widetilde{U}_iQ_i^{\perp}D_{T^*}\\
	=& 0 . \hspace{1cm}[\text{From  }\eqref{D_TU_iP_i}]
\end{align*} Therefore, we have 
\begin{equation}\label{lem10}
	P_iU_i^*D_TD_{T^*}-P_i^{\perp}U_i^*T^*|_{\mathcal{D}_{T^*}}=D_TD_{T^*}\widetilde{U}_iQ_i-T^*\widetilde{U}_iQ_i^{\perp}.
\end{equation}
Now we prove $(b)$. We have

\begin{align*}
	& P_iU_i^*D_TD_{T^*}\widetilde{U}_jQ_j-P_i^{\perp}U_i^*P_jU_j^*T^*-P_iU_i^*T^*\widetilde{U}_jQ_j^{\perp}\\
	= & P_iU_i^*(D_TD_{T^*}\widetilde{U}_jQ_j-T^*\widetilde{U}_jQ_j^{\perp})-P_i^{\perp}U_i^*P_jU_j^*T^*\hspace{1cm}[\text{by } \eqref{lem10}]\\
	= & P_iU_i^*(P_jU_j^*D_TD_{T^*}-P_j^{\perp}U_j^*T^*)-P_i^{\perp}U_i^*P_jU_j^*T^*\hspace{1cm}[ \text{by }\eqref{relbetUiPiQiUitilde}]\\
	= & P_iU_i^*P_jU_j^*D_TD_{T^*}-P_iU_i^*P_j^{\perp}U_j^*T^*-P_i^{\perp}U_i^*P_jU_j^*T^*\\
	= & P_jU_j^*P_iU_i^*D_TD_{T^*}-(P_iU_i^*P_j^{\perp}U_j^*+P_i^{\perp}U_i^*P_jU_j^*)T^*\\
	= & P_jU_j^*P_iU_i^*D_TD_{T^*}-(P_jU_j^*P_i^{\perp}U_i^*+P_j^{\perp}U_j^*P_iU_i^*)T^* \hspace{1cm}[\text{by } \eqref{F_i^*F_j'2} ]\\
	= & P_jU_j^*(P_iU_i^*D_TD_{T^*}-P_i^{\perp}U_i^*T^*)-P_j^{\perp}U_j^*P_iU_i^*T^* \\
    = & 	P_jU_j^*(D_TD_{T^*}\widetilde{U}_iQ_i-T^*\widetilde{U}_iQ_i^{\perp})-P_j^{\perp}U_j^*P_iU_i^*T^*\hspace{1cm}[\text{by } \eqref{lem10}]\\
    = & P_jU_j^*D_TD_{T^*}\widetilde{U}_iQ_i-P_j^{\perp}U_j^*P_iU_i^*T^*-P_jU_j^*T^*\widetilde{U}_iQ_i^{\perp}.
\end{align*}	
	
\medskip

\noindent \textbf{Proof of (\ref{eqn:new-003}).}  Note that (\ref{eqn:new-003}) holds if and only if 
	\begin{itemize}
		\item[$(a')$] $V_iV_i^*+D_iD_i^*=I_{\mathcal{K}_0}$ ,
		\item[$(b')$] $D_iE_i^*=0 $ ,
		\item[$(c')$] $E_iE_i^*=I_{l^2(\mathcal{D}_{T^*})}$.
	\end{itemize}
First we observe that the matrix of $E_iE_i^*$ with respect to the decomposition $\mathcal{D}_{T^*}\oplus \mathcal{D}_{T^*}\oplus \cdots$ is \[ \begin{bmatrix}
\widetilde{U_i}Q_i^{\perp}\widetilde{U_i}^*+\widetilde{U_i}Q_i\widetilde{U_i}^*&\widetilde{U_i}Q_iQ_i^{\perp}\widetilde{U_i}^*&0&0&\ldots\\
\widetilde{U_i}Q_i^{\perp}Q_i\widetilde{U_i}^*& \widetilde{U_i}Q_i^{\perp}\widetilde{U_i}^*+\widetilde{U_i}Q_i\widetilde{U_i}^*&\widetilde{U_i}Q_iQ_i^{\perp}\widetilde{U_i}^*&0&\ldots\\
0&\widetilde{U_i}Q_i^{\perp}Q_i\widetilde{U_i}^*& \widetilde{U_i}Q_i^{\perp}\widetilde{U_i}^*+\widetilde{U_i}Q_i\widetilde{U_i}^*&\widetilde{U_i}Q_iQ_i^{\perp}\widetilde{U_i}^*&\ldots\\
0&0&\widetilde{U_i}Q_i^{\perp}Q_i\widetilde{U_i}^*& \widetilde{U_i}Q_i^{\perp}\widetilde{U_i}^*+\widetilde{U_i}Q_i\widetilde{U_i}^*&\ldots\\
\vdots&\vdots&\vdots&\vdots&\ddots
 \end{bmatrix} . \]
Since $\widetilde{U_i}\widetilde{U_i}^*=I$, $Q_i+Q_i^{\perp}=I$ and $Q_i^{\perp}Q_i=0$, we have $(c')$. The $(1,1)$ entry of $D_iE_i^*$ is $D_{T^*}\widetilde{U_i}Q_iQ_i^{\perp}\widetilde{U_i}^*=0$ and the $(2,1)$ entry is $-PU_i^*T^*Q_i^{\perp}\widetilde{U_i}^*$. So, we have from \eqref{relbetUiPiQiUitilde}
\[
-PU_i^*T^*Q_i^{\perp}\widetilde{U_i}^*=-T^*\widetilde{U_i}Q_iQ_i^{\perp}\widetilde{U_i}^*=0.
\]
All other entries of $D_iE_i^*$ are equal to $0$. This proves $(b')$. For proving $(a)$ we first observe that, 
\[
V_iV_i^*= \begin{bmatrix}
	T_iT_i^*&T_iD_TU_iP_i&0&0&\ldots \\
	P_iU_i^*D_TT_i^*&P_iU_i^*D_{T}^2U_iP_i+P_i^{\perp}U_i^*U_iP_i^{\perp}&P_i^{\perp}U_i^*U_iP_i&0&\ldots\\
	0&P_iU_i^*U_iP_i^{\perp}&P_iU_i^*U_iP_i+P_i^{\perp}U_i^*U_iP_i^{\perp}&P_i^{\perp}U_i^*U_iP_i&\ldots\\
	0&0&P_iU_i^*U_iP_i^{\perp}&P_iU_i^*U_iP_i+P_i^{\perp}U_i^*U_iP_i^{\perp}&\ldots\\
	\vdots&\vdots&\vdots&\vdots&\ddots 
\end{bmatrix} .
\]
Using the fact that $U_i^*U_i=I$, $P_i^{\perp}P_i=0$ and $P_i+P_i^{\perp}=I$ we obtain 
\[
V_iV_i^*=\begin{bmatrix}
	T_iT_i^*&T_iD_TU_iP_i&0&0&\ldots \\
	P_iU_i^*D_TT_i^*&I-P_iU_i^*T^*TU_iP_i&0&0&\ldots\\
	0&0&I&0&\ldots\\
	0&0&0&I&\ldots \\
	\vdots&\vdots&\vdots&\vdots&\ddots 
\end{bmatrix}.
\]
Hence matrix of $V_iV_i^*+D_iD_i^*$ with respect to the decomposition $\mathcal{H}\oplus \mathcal{D}_T\oplus \mathcal{D}_T\oplus \cdots $ is 
\[ \begin{bmatrix}
 T_iT_i^*+D_{T^*}\widetilde{U_i}Q_i\widetilde{U_i}^*D_{T^*}&T_iD_TU_iP_i-D_{T^*}\widetilde{U_i}Q_iTU_iP_i&0&0&\ldots \\
 P_iU_i^*D_TT_i^*-P_iU_i^*T^*D_{T^*}\widetilde{U_i}Q_i&I&0&0&\ldots\\
 0&0&I&0&\ldots\\
 0&0&0&I&\ldots \\
 \vdots&\vdots&\vdots&\vdots&\ddots 
 
\end{bmatrix}.
\]
Here the $(1,1)$ entry is equal to the identity by $(4')$. Thus, it remains to prove that
\[
T_iD_TU_iP_i-D_{T^*}\widetilde{U_i}Q_iTU_iP_i|_{\mathcal{D}_{T}}=0.
\]
Note that 
\begin{align*}
	&T_iD_TU_iP_iD_{T}-D_{T^*}\widetilde{U_i}Q_iTU_iP_iD_{T}\\
	&=T_i(T_i'-T_i^*T)-D_{T^*}\widetilde{U_i}Q_i\widetilde{U_i}^*TD_{T}\hspace{1cm}[\text{by }\eqref{relbetUiPiQiUitilde}]\\
	&=T_iT_i'-T_iT_i^*T-D_{T^*}\widetilde{U_i}Q_i\widetilde{U_i}^*D_{T^*}T\\
	&=T-T_iT_i^*T-(I-T_iT_i^*)T\hspace{1cm}[\text{by condition-(4')}]\\
	&=0.
\end{align*}

\medskip

\noindent \textbf{Proof of (\ref{eqn:new-00011}).} Note that
\begin{align*}
&\underline{V_k}D_{k+1}+\underline{D_k}E_{k+1}= \\ &\begin{bmatrix}
	\underline{T_k}D_{T^*}\widetilde{U}_{k+1}Q_{k+1}+D_{T^*}\underline{\widetilde{U}_k}\;\underline{Q_k}\widetilde{U}_{k+1}\;Q_{k+1}^{\perp}&D_{T^*}\underline{\widetilde{U}_k}\;\underline{Q_k}\widetilde{U}_{k+1}\;Q_{k+1}&0&\ldots\\ \underline{P_k}\underline{U_k}^*D_TD_{T^*}\widetilde{U}_{k+1}Q_{k+1}-\underline{P_k}^{\perp}\underline{U_k}^*P_{k+1}U_{k+1}^*T^*-\underline{P_k}\underline{U_k}^*T^*\widetilde{U}_{k+1}\;Q_{k+1}^{\perp}&-\underline{P_k}\underline{U_k}^*T^*\widetilde{U}_{k+1}\;Q_{k+1}  &0&\ldots\\
	-\underline{P_k}\underline{U_k}^*P_{k+1}U_{k+1}^*T^*&0&0&\ldots\\
	0&0&0&\ldots \\
	\vdots&\vdots&\vdots&\ldots
\end{bmatrix}.
\end{align*}
In the above block-matrix the $(3,1)$ entry is equal to $0$. This is because, from \eqref{underlineVk} we have $\underline{P_k}\underline{U_k}^*P_{k+1}U_{k+1}^* =0$. Again, the $(2,2)$ entry is equal to $0$ as we have from \eqref{relbetUiPiQiUitilde} that
\[
\underline{P_k}\underline{U_k}^*T^*\widetilde{U}_{k+1}\;Q_{k+1}= \underline{P_k}\underline{U_k}^*P_{k+1}U_{k+1}^*T^*.
\]
It follows from \eqref{underlineEk} that the $(1,2)$ entry
\[
D_{T^*}\underline{\widetilde{U}_k}\;\underline{Q_k}\widetilde{U}_{k+1}\;Q_{k+1}=0.
\]
Hence $\underline{V_k}D_{k+1}+\underline{D_k}E_{k+1}=\underline{D_{k+1}}$ if and only if
\begin{enumerate}
\item[($a$)] $\underline{T_k}D_{T^*}\widetilde{U}_{k+1}Q_{k+1}+D_{T^*}\underline{\widetilde{U}_k}\;\underline{Q_k}\widetilde{U}_{k+1}\;Q_{k+1}^{\perp}=D_{T^*}\underline{\widetilde{U}_{k+1}}\;\underline{Q_{k+1}}$ (as maps from $\mathcal{D}_{T^*}$ to  $\mathcal{H}$) and
\item[($b$)] $\underline{P_k}\;\underline{U_k}^*D_TD_{T^*}\widetilde{U}_{k+1}\;Q_{k+1}-\underline{P_k}^{\perp}\underline{U_k}^*P_{k+1}U_{k+1}^*T^*-\underline{P_k}\;\underline{U_k}^*T^*\widetilde{U}_{k+1}\;Q_{k+1}^{\perp}=-\underline{P_{k+1}}\;\underline{U_{k+1}}^*T^*$ (as maps from $\mathcal{D}_{T^*}$ to $\mathcal{D}_T$).
\end{enumerate}
First we show that for $m=1,\dots ,n$,
\begin{equation}\label{_1'}
	\underline{T_m}D_{T^*}=D_{T^*}\underline{\widetilde{U}_{m}}\;\underline{Q_m}^{\perp}+TD_{T^*}\underline{\widetilde{U}_{m}}\;\underline{Q_m}.
\end{equation}
We prove this inductively as follows. From condition-$(1')$ we have that \eqref{_1'} holds for $m=1$. Suppose it holds for $m=k$ for some $k\in \mathbb{N}$. We will prove that it holds for $m=k+1$. Note that
\begin{align*}
	\underline{T_{k+1}}D_{T^*}&=T_{k+1}(D_{T^*}\underline{\widetilde{U}_{k}}\;\underline{Q_k}^{\perp}+TD_{T^*}\underline{\widetilde{U}_{k}}\;\underline{Q_k})\\
	&=T_{k+1}D_{T^*}\underline{\widetilde{U}_{k}}\;\underline{Q_k}^{\perp}+T_{k+1}TD_{T^*}\underline{\widetilde{U}_{k}}\;\underline{Q_k}\\
	&=(D_{T^*}\widetilde{U}_{k+1}Q_{k+1}^{\perp}+TD_{T^*}\widetilde{U}_{k+1}Q_{k+1})\underline{\widetilde{U}_{k}}\;\underline{Q_k}^{\perp}+T(D_{T^*}\widetilde{U}_{k+1}Q_{k+1}^{\perp}+TD_{T^*}\widetilde{U}_{k+1}Q_{k+1})\underline{\widetilde{U}_{k}}\;\underline{Q_k}\\
	&\hspace{6cm} [\text{by condition}-(1')]\\
	&=D_{T^*}\widetilde{U}_{k+1}Q_{k+1}^{\perp}\underline{\widetilde{U}_{k}}\;\underline{Q_k}^{\perp}+TD_{T^*}(\widetilde{U}_{k+1}Q_{k+1}\underline{\widetilde{U}_{k}}\;\underline{Q_k}^{\perp}+\widetilde{U}_{k+1}Q_{k+1}^{\perp}\underline{\widetilde{U}_{k}}\;\underline{Q_k})\\
	& \hspace{4cm}+T^2D_{T^*}\widetilde{U}_{k+1}Q_{k+1}\underline{\widetilde{U}_{k}}\;\underline{Q_k} \hspace{1cm}[\text{by }\eqref{underlineEk}, \, \eqref{Ek2}]\\
	&=D_{T^*}\underline{\widetilde{U}_{k+1}}\;\underline{Q_{k+1}}^{\perp}+TD_{T^*}\underline{\widetilde{U}_{k+1}}\;\underline{Q_{k+1}} .
\end{align*} 
Hence by induction \eqref{_1'} holds for all $k=1,\ldots ,n$. 
For proving $(a)$ we first observe that
\begin{align*}
	&\underline{T_k}D_{T^*}\widetilde{U}_{k+1}Q_{k+1}+D_{T^*}\underline{\widetilde{U}_k}\;\underline{Q_k}\widetilde{U}_{k+1}\;Q_{k+1}^{\perp}\\
	&=\underline{T_k}D_{T^*}\widetilde{U}_{k+1}Q_{k+1}-D_{T^*}\underline{\widetilde{U}_{k}}\;\underline{Q_k}^{\perp}\widetilde{U}_{k+1}\;Q_{k+1}+D_{T^*}\underline{\widetilde{U}_{k+1}}\;\underline{Q_{k+1}}\\
	&=(\underline{T_k}D_{T^*}-D_{T^*}\underline{\widetilde{U}_{k}}\;\underline{Q_k}^{\perp})\widetilde{U}_{k+1}\;Q_{k+1}+D_{T^*}\underline{\widetilde{U}_{k+1}}\;\underline{Q_{k+1}}\\
	&=TD_{T^*}\underline{\widetilde{U}_{k}}\;\underline{Q_k}\widetilde{U}_{k+1}\;Q_{k+1}+D_{T^*}\underline{\widetilde{U}_{k+1}}\;\underline{Q_{k+1}}\hspace{1cm} [\text{by }\eqref{_1'}]\\
	&=D_{T^*}\underline{\widetilde{U}_{k+1}}\;\underline{Q_{k+1}}.\hspace{2cm}[\text{by  }\eqref{underlineEk}]
\end{align*}
This proves $(a)$. Now for proving $(b)$ we have
\begin{align*}
	&\underline{P_k}\;\underline{U_k}^*D_TD_{T^*}\widetilde{U}_{k+1}Q_{k+1}-\underline{P_k}^{\perp}\underline{U_k}^*P_{k+1}U_{k+1}^*T^*-\underline{P_k}\underline{U_k}^*T^*\widetilde{U}_{k+1}\;Q_{k+1}^{\perp}\\
	&=\underline{P_k}\;\underline{U_k}^*(D_TD_{T^*}\widetilde{U}_{k+1}Q_{k+1}-T^*\widetilde{U}_{k+1}\;Q_{k+1}^{\perp})-\underline{P_k}^{\perp}\underline{U_k}^*P_{k+1}U_{k+1}^*T^*\\
	&=\underline{P_k}\;\underline{U_k}^*P_{k+1}U_{k+1}^*D_TD_{T^*}-\underline{P_k}\;\underline{U_k}^*P_{k+1}^{\perp}U_{k+1}^*T^*-\underline{P_k}^{\perp}\underline{U_k}^*P_{k+1}U_{k+1}^*T^*\hspace{1cm}[\text{by }\eqref{lem10}]\\
	&=-\underline{P_{k+1}}\;\underline{U_{k+1}}^*T^*.\hspace{2cm}[\text{by }\eqref{underlineVk}]
\end{align*}
This proves $(b)$.

\vspace{0.4cm}

\section{Data availability statement}

 \begin{enumerate}
 
 \item Data sharing is not applicable to this article as no datasets were generated or analysed during
the current study.\\

\item In case any datasets are generated during and/or analysed during the current study, they must
be available from the corresponding author on reasonable request.

\end{enumerate}

\vspace{0.4cm}

\end{document}